\documentclass{article}[11pt]

\usepackage{amsmath}
\usepackage{amssymb}
\usepackage{caption}
\usepackage{amsfonts}
\usepackage{mathrsfs}
\usepackage{amsthm}
\usepackage{comment}
\usepackage{enumerate}
\usepackage{graphicx}
\usepackage[utf8]{inputenc}
\usepackage{bbm}
\usepackage{tikz}
\usepackage{float}
\usepackage[utf8]{inputenc}

\setlength{\oddsidemargin}{0pt}
\setlength{\textwidth}{485pt}
\setlength{\topmargin}{0pt}
\setlength{\textheight}{650pt}

\title{An explicit representation of the transition densities of the skew Brownian motion with drift and two semipermeable barriers}

\author{David Dereudre\footnote{Laboratoire de Math\'ematiques Paul Painlev\'e,  UMR CNRS 8524, Universit\'e Lille1, 59655 Villeneuve d’Ascq Cedex, France. 
david.dereudre@univ-lille1.fr}, 
Sara Mazzonetto\footnote{Institut f\"{u}r Mathematik der Universit\"{a}t Potsdam. Am Neuen Palais 10, 14469 Potsdam, Germany, and 
Laboratoire de Math\'ematiques Paul Painlev\'e, UMR CNRS 8524, Universit\'e Lille1, 59655 Villeneuve d’Ascq Cedex, France. \mbox{mazzonet@uni-potsdam.de}}, 
Sylvie Roelly\footnote{Institut f\"{u}r Mathematik der Universit\"{a}t Potsdam. Am Neuen  Palais 10, 14469 Potsdam, Germany, roelly@math.uni-potsdam.de}.}
\date{ }

\newtheorem{lemma}{Lemma}[section]

\newtheorem{remark}[lemma]{Remark}

\newtheorem{proposition}[lemma]{Proposition}

\newtheorem{theorem}[lemma]{Theorem}

\begin{document}

\maketitle
\begin{abstract}
In this paper, we obtain an explicit representation of the transition density of the one-dimensional skew Brownian motion with (a constant drift and) two semipermeable barriers. Moreover we propose a rejection method to simulate this density in an \emph{exact} way.
\end{abstract}

\textbf{Key words:} Skew Brownian motion; semipermeable barriers; Distorted Brownian motion; Local time; Rejection sampling; Exact simulation.

\textbf{2010 MSC}: Primary 60J35, 68U20; Secondary 60H10, 65C20

\section{Introduction}
The need to study the skew Brownian motion, and in particular its explicit transition densities, has emerged in various contexts during the last years. An overview and list of historical background and main applications can be found in \cite{AppShe}. 
Nevertheless, to the best of our knowledge, the transition densities of the one-dimensional skew Brownian motion with constant drift and two semipermeable barriers was not yet given as a closed formula, not even for the driftless version. In the latter case, one can only find in \cite{GOO} a \emph{non explicit formula} for it.

We obtain here a closed formula for the transition density of the skew Brownian motion with drift and two semipermeable barriers as series of Gaussian transition densities and cumulative distribution functions. This is a non trivial generalization of the case of reflecting barriers treated in \cite{Vee}.\\

In order to avoid repetitions, from now on we will use the following notation: $\beta$-SBM is the skew Brownian motion with one semipermeable barrier of permeability coefficient $\beta$ and $(\beta_1,\beta_2)$-SBM is the skew Brownian motion with two semipermeable barriers of permeability coefficients respectively $\beta_1$ and $\beta_2$.

The $\beta-$SBM was introduced by It\^o and McKean in \cite{IMcK}, as a one-dimensional Wiener process transformed by flipping the excursions from the origin with probability $\frac{1-\beta}{2} \in (0,1)$ (if $\beta=0$ it is the usual Brownian motion). Unfortunately this \emph{trajectorial definition} does not lend itself to generalizations.

The skew Brownian motion behaves as a Brownian motion between the barriers but it has a particular behaviour when it reaches them: it is partially reflected. This interpretation yields the various generalizations, that we are going to present shortly.\\

A recent survey on the skew Brownian motion can be found in \cite{Lej} in which various equivalent representations of the semigroup are given. Let us now present the process as a solution to a stochastic differential equation.

It was proved by Harrison and Shepp in \cite{HS} that  if $|\beta|\leq 1$, there is a unique strong solution to the {stochastic differential equation} involving the symmetric local time at the point 0 $(L^0_t)_{t\geq 0}$
\begin{equation} \label{skew}
\begin{cases}
dX_t=dW_t+ \beta \, d L^0_t(X), \\
X_0=x, \quad L^0_t=\int_0^t \mathbbm{1}_{\{X_s=0\}} d L^0_s,
\end{cases}
\end{equation}
that is the $\beta-$SBM. In particular if $\beta=0$, it is the usual Brownian motion. Harrison and Shepp also proved that if $|\beta|>1$ there is no solution to (\ref{skew}). Notice that if $x>0$, the $1-$SBM is the reflected Brownian motion on the positive semi-axis, and if $x<0$, the $(-1)-$SBM is the reflected Brownian motion on the negative semi-axis.\\

There are many possible generalizations of the SBM: one-dimensional skew BM with more semipermeable barriers (\cite{LeGall1},\cite{Ouk},\cite{Ram}), $n$-dimensional skew BM with one permeable barrier, as it is called in \cite{Ouk} referring to \cite{Por2} and \cite{Por}, and distorted Brownian motion (\cite{ORT}). A new proof of the weak existence and uniqueness for the $n-$dimensional SBM appeared recently in \cite{AB}.\\

The existence of several barriers does not allow anymore a trajectorial interpretation as randomly flipped excursions like for the $\beta$-SBM, nevertheless one can define the process as the unique strong solution to a slight modification of equation $(\ref{skew})$.
The stochastic differential equation $\mathcal{E}\left((\beta_1,\beta_2),\mu\right)$ satisfied by the $(\beta_1,\beta_2)-$SBM with drift $\mu\in\mathbb{R}$ is indeed
\begin{equation} \label{2skewmu}
\begin{cases}
dX_t= dW_t + \mu dt +  \beta_1 d  L^{z_1}_t(X)+ \beta_2 d  L^{z_2}_t(X),\\
X_0=x, \quad L^{z_1}_t=\int_0^t \mathbbm{1}_{\{X_s=z_1\}} d L^{z_1}_s, \  L^{z_2}_t=\int_0^t \mathbbm{1}_{\{X_s=z_2\}} d L^{z_2}_s,
\end{cases}
\tag*{$\mathcal{E}((\beta_1,\beta_2),\mu)$}
\end{equation}
where the coefficients $\beta_1,\beta_2 \in (-1,1)$ and $z_1,z_2\in \mathbb{R}$ are the barriers. 
Obviously, if $\beta_2=0$ the second barrier disappears and one obtains the equation satisfied by the $\beta_1$-SBM with drift with semipermeable barrier $z_1$:
\begin{equation} \label{1skewmu}
\begin{cases}
dX_t= dW_t + \mu dt +  \beta_1 d L^{z_1}_t(X),\\
X_0=x, \quad L^{z_1}_t=\int_0^t \mathbbm{1}_{\{X_s=z_1\}} d L^{z_1}_s.
\end{cases}
\tag*{$\mathcal{E}(\beta_1,\mu)$}
\end{equation}

The transition probability density function $p^{(\beta_1)}_\mu(t,x,y)$ of the Markov process, unique solution to \ref{1skewmu}, is computed in \cite{EMloc} using the trajectorial interpretation. As already noticed this approach is not extendable for finding the transition density in presence of more barriers. So let us briefly recall how to compute the semigroup of the $\beta-$SBM with barrier in zero as solution of a partial differential equation with specific boundary conditions.

In \cite{Por2} and \cite{Por} it is shown that
\begin{equation} \label{gen} 
\hat{L}=\frac{1}{2} \Delta +\beta \delta_0 \nabla
\end{equation}
is, formally, the infinitesimal generator of the $\beta-$ SBM with barrier in zero. Moreover the parabolic problem $\partial_t u=\hat{L} u$ (whose solution is the semigroup generated by $\hat{L}$) is equivalent to the transmission problem (see \cite{Lej}, section 3.1):
\begin{equation} \label{trasm}
\begin{cases}
\partial_t v =\frac{1}{2} \Delta v\\
(1+\beta) \nabla v(t,0^+)=(1-\beta) \nabla v(t,0^-) \qquad \textrm{(transmission condition).}
\end{cases}
\end{equation}
A solution to (\ref{trasm})
is equivalently a weak solution to the following problem:
\begin{equation} \label{div}
\begin{cases}
u(t,x)\in\mathcal{C}(0,T;L^2(\mathbb{R}))\cap L^2(0,T;H^1(\mathbb{R})), \\
\partial_t u =L u,\\
u(0,x)=\varphi(x)\in L^2(\mathbb{R}),
\end{cases}
\end{equation}
where $L$ is the divergence form operator 
\[L=\frac{1}{2 k(x)} \frac{d}{dx}\left(k(x)\frac{d}{dx}\right) \textrm{ with } k(x)= \frac12+\beta \left(\mathbbm{1}_{\mathbb{R}^+}(x)-\frac12 \right)
\]
with domain $\mathcal{D}(L)=\left\{ \varphi \in H^1(\mathbb{R}) ; \ k(x)\varphi '(x)\in H^1(\mathbb{R})\right\}$. Using Dirichlet forms one proves that the unique solution of (\ref{div}) is the semigroup of the $\beta-$SBM, solution of (\ref{skew}) (see section 3 of \cite{Lej}).

Our approach for computing the transition density of the $(\beta_1,\beta_2)-$SBM with or without drift will be based on identiying its infinitesimal generator as a divergence form operator, generalising the case of the driftless process treated for example in \cite{Etoreth},\cite{LM}.  
Once we will have computed the divergence form of the infinitesimal generator $(L,\mathcal{D}(L))$ associated to \ref{2skewmu}, we will solve the Kolmogorov equation satisfied by the semigroup: for each continuous and bounded function $f$, $P_t f$ is the solution in $ \mathcal{C}^{1,2}(\mathbb{R}_+\times \mathbb{R}\setminus{\{0\}},\mathbb{R})\cap \mathcal{C}(\mathbb{R}_+\times \mathbb{R},\mathbb{R})$ of
\begin{equation}\label{semig}
\begin{cases}
\frac{\partial}{\partial t}u(t,x)=L u(t,x) = \frac{1}{2}\frac{\partial^2}{\partial x^2}u(t,x) +\mu \frac{\partial}{\partial x} u(t,x)& t\in\mathbb{R}_+, \ x \in \mathbb{R}\setminus \{z_1,z_2\},\\
\frac{1+\beta_j}{2} \ \nabla u(t,z_j^+) =\frac{(1-\beta_j)}2 \ \nabla u(t,z_j^-)  & t\in \mathbb{R}_+, \  j=1,2, \\
u(t,z_j^+)=u(t,z_j^-) & t\in \mathbb{R}_+, \ j=1,2,\\
u(0,x)=f(x) & x\in\mathbb{R}.
\end{cases}
\end{equation}
The transition density $(t,y)\mapsto p(t,x,y)$ will satisfy, for $x$ fixed, the analogous PDE for the adjoint $L^*$:
\begin{equation}\label{traden}
\begin{cases}
\frac{\partial}{\partial t}p(t,y)=L^* p(t,y) & t\in(0,+\infty), \ y \in \mathbb{R}\setminus \{z_1,z_2\},\\
\frac{1}{2} \nabla p(t,z_j^+) \, - \, \mu \, p(t,z_j^+) = \frac{1}{2} \nabla p(t,z_j^-) \, - \, \mu \, p(t,z_j^-) & t\in \mathbb{R}_+, \  j=1,2, \\
(1+\beta_j) \,  p(t,z_j^-)=(1-\beta_j) \, p(t,z_j^+) & t\in \mathbb{R}_+, \ j=1,2,\\
p(0,y)=\delta_{x}(y) & y\in\mathbb{R}.
\end{cases}
\end{equation}

The paper is organized as follows: in Section~\ref{reproper} we give an explicit characterization of the infinitesimal generator associated to the solution to \ref{2skewmu} in order to obtain a representation of its transition density. Then we exploit it in the following cases: first for the $(\beta_1,\beta_2)-$SBM without drift, then for the $\beta-$SBM with constant drift, and finally we give the formula for the drifted version of the $(\beta_1,\beta_2)-$SBM.
Moreover we discuss some particular and limit cases and compare our results to former ones. In Section~\ref{exactsimulation} we present a rejection sampling method that allows to simulate exactly the SBM with two semipermeable barriers.

\section{The transition density of the $(\beta_1,\beta_2)$-SBM with and without drift} \label{reproper}
\subsection{The framework and the method}\label{method}

In order to obtain the transition density of the $(\beta_1,\beta_2)$-SBM, we identify its infinitesimal generator. The infinitesimal generator of the $\beta_1$-SBM with one semipermeable barrier in $z_1$, solution of the equation $\mathcal{E}(\beta_1,0)$, is the divergence form operator
\begin{equation} \label{Lkoperator}
\begin{cases}L=\frac{1}{2 k(x)}\frac{d}{dx}\left(k(x)\frac{d}{dx}\right), \quad \mathcal{D}(L)=\left\{ \psi \in H^1(\mathbb{R}) ; \ k(x)\psi'(x)\in H^1(\mathbb{R}) \right\},\\
k(x)= \frac12 + \beta_1\left(\mathbbm{1}_{[z_1,+\infty)}(x)- \frac12\right)
\end{cases}
\end{equation}
with piecewise constant function $k(x)$ unique up to a multiplicative constant. 
(See for example \cite{LM}).

Notice that a straightforward generalization of (\ref{Lkoperator}) yields to the generator of the $\beta=(\beta_1,\beta_2,\ldots,\beta_n)$-SBM with $n$ semipermeable barriers in $z_1<z_2<\ldots<z_n$, by modifying the piecewise constant function $k(x)$. Indeed, in case of two semipermeable barriers $z_1,z_2$, the function $k(x)$ assumes three different values:
\begin{equation} \label{k2skew}
k(x)=\left(\frac12 +\beta_1\left(\mathbbm{1}_{[z_1,+\infty)} (x) -\frac12 \right) \right)\left(\frac12 +\beta_2\left(\mathbbm{1}_{[z_2,+\infty)}(x)-\frac12 \right)\right)=
\begin{cases} \frac14 (1-\beta_1)(1-\beta_2) & x<z_1 ,\\
\frac14 (1+\beta_1)(1-\beta_2) & z_1 \leq x < z_2,\\
\frac14(1+\beta_1)(1+\beta_2) & x\geq z_2.
\end{cases}
\end{equation}
Therefore taking $k=\prod_{m=0}^n \left(\frac12 +\beta_m\left(\mathbbm{1}_{[z_m,+\infty)}-\frac12 \right) \right)$, $(L,\mathcal{D}(L))$ is the infinitesimal generator of the SBM with $n$ semipermeable barriers.\\

The operator $L$ is a divergence form operator with discontinuous coefficients, therefore one obtains a representation for the transition densities, as in \cite{GOO}, Chapter II. In section 5 the authors recover themselves the case of the $\beta$-SBM and the $(\beta_1,\beta_2)$-SBM without drift. Unfortunately in the latter case the authors do not explicit further the transition density function, they just identify it through a ``kind of $\theta$-function''
\begin{equation}  \label{thetafunction}
h(t,\xi,C,\alpha)=\frac{1}{2\pi}\int_{-\infty}^{+\infty}e^{- w^2 t}e^{i w\xi}\left(1+Ce^{i w \alpha}\right)^{-1} dw.
\end{equation}
We are therefore going to generalize the method, called Green function method or Titchmarsh-Kodaira-Yoshida method, for giving an \textbf{explicit} representation of the transition density function associated to the slightly more general infinitesimal generator including a constant drift $\mu\in \mathbb{R}$.
\begin{equation} \label{operator}
\begin{cases} L=\frac{1}{2 h(x)}\frac{d}{dx}\left(h(x)\frac{d}{dx}\right), \\
\mathcal{D}(L)=\left\{ \psi \in H_0^1(h(x)dx) ; \ h(x)\psi'(x)\in H^1(h^{-1}(x)dx) \right\},\\
h(x):=k(x) e^{2 \mu x} \end{cases}
\end{equation}
where $k(x)$ is the piecewise constant function defined in (\ref{k2skew}). Remark that $h(x)$ is strictly positive but not bounded from above.

\begin{lemma}\footnote{The authors would like to thank Markus Klein (Universit\"{a}t Potsdam) for the interesting discussions}
The operator $(L,\mathcal{D}(L))$ defined by (\ref{operator})
\begin{enumerate}[(i)]
\item is self-adjoint in $L^2(h(x)dx)$ and its spectrum $\sigma(L)$  is a closed subset of $(-\infty,0]$ containing $0$;
\item is the infinitesimal generator in $L^2(h(x)dx)$ of the process $(\beta_1,\beta_2)$-SBM with drift $\mu$.
\end{enumerate}
\end{lemma}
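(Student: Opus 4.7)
The plan is to derive both statements from the symmetric Dirichlet form
\[
\mathcal{E}(u,v)=\frac{1}{2}\int_{\mathbb{R}} h(x)\,u'(x)\,v'(x)\,dx,\qquad \mathcal{F}=H^1(h(x)\,dx),
\]
on the Hilbert space $L^2(h(x)\,dx)$. Since $h$ is piecewise continuous, strictly positive, and locally bounded away from $0$, $(\mathcal{F},\mathcal{E}_1)$ with $\mathcal{E}_1=\mathcal{E}+\|\cdot\|^2_{L^2(h\,dx)}$ is a Hilbert space, and the form is manifestly symmetric, nonnegative, and closed. By the standard bijection between closed symmetric nonnegative forms and nonnegative self-adjoint operators (Reed--Simon, Theorem VIII.15), this yields a unique self-adjoint $-L\geq 0$ such that $\mathcal{E}(u,v)=(-Lu,v)_{L^2(h\,dx)}$ for all $u\in\mathcal{D}(L)$, $v\in\mathcal{F}$.

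To identify this abstract $L$ with the operator in (\ref{operator}), I would integrate by parts on the three intervals $(-\infty,z_1)$, $(z_1,z_2)$, $(z_2,+\infty)$: this yields $Lu=\frac{1}{2h}(hu')'$ off the barriers, together with flux terms $\pm\tfrac{1}{2}h(z_j^{\pm})u'(z_j^{\pm})v(z_j)$; cancellation against every $v\in\mathcal{F}$ forces the continuity $h(z_j^+)u'(z_j^+)=h(z_j^-)u'(z_j^-)$. Using $h(z_j^{\pm})=k(z_j^{\pm})e^{2\mu z_j}$ and $k(z_j^+)/k(z_j^-)=(1+\beta_j)/(1-\beta_j)$, this is exactly the transmission condition from (\ref{semig}), so $\mathcal{D}(L)$ coincides with the space stated in (\ref{operator}). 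The inclusion $\sigma(L)\subseteq(-\infty,0]$ is then immediate from $\mathcal{E}\geq 0$, and the spectrum is always closed. To put $0\in\sigma(L)$, I would exhibit a Weyl sequence: take smooth cutoffs $u_n$ supported in a single open component of $\mathbb{R}\setminus\{z_1,z_2\}$, in a region where $h$ is integrable, translated and rescaled so that on such a component $L$ acts as the constant-coefficient operator $\tfrac12\partial^2+\mu\partial$; a scaling computation then gives $\|L u_n\|/\|u_n\|\to 0$, showing that $0$ lies in the (essential) spectrum.

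For part (ii), I would invoke Fukushima's theory. The form $(\mathcal{E},\mathcal{F})$ is regular on $L^2(h\,dx)$ because $C_c^\infty(\mathbb{R})\cap\mathcal{F}$ is an $\mathcal{E}_1$-core which separates points and is dense in $C_0(\mathbb{R})$. Let $Y$ denote the associated Hunt process, whose $L^2(h\,dx)$-generator is the $L$ just constructed. To match $Y$ with the unique strong solution $X$ of \ref{2skewmu}, I would apply the It\^o--Tanaka formula to $f(X)$ for $f\in\mathcal{D}(L)\cap C^2(\mathbb{R}\setminus\{z_1,z_2\})$: the drift and martingale parts reproduce $\int_0^t Lf(X_s)\,ds$ away from the barriers, while the local-time contributions at $z_j$ collect into a term proportional to $\bigl((1-\beta_j)f'(z_j^-)-(1+\beta_j)f'(z_j^+)\bigr)\,dL^{z_j}_t(X)$, using the symmetric local-time identity $dL^{z_j}_t=\tfrac12(dL^{z_j,+}_t+dL^{z_j,-}_t)$; this vanishes precisely under the transmission condition defining $\mathcal{D}(L)$. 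Hence $f(X_t)-f(X_0)-\int_0^t Lf(X_s)\,ds$ is a martingale, $X$ solves the $(L,\mathcal{D}(L))$-martingale problem, and uniqueness of the Markov process attached to a regular Dirichlet form identifies $X$ with $Y$ in law.

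The main obstacle is controlling the unbounded weight $h=k\,e^{2\mu x}$ when $\mu\neq 0$: the integration by parts must produce no boundary term at infinity, which is exactly why the domain in (\ref{operator}) pairs $H_0^1(h\,dx)$ with $H^1(h^{-1}\,dx)$, and the Weyl sequence locating $0$ in the spectrum has to be placed on the side where $h$ decays (or in the bounded middle strip when $\mu=0$). The transmission-condition identification at $z_1,z_2$, although conceptually transparent, also requires a careful distributional argument in a neighbourhood of each barrier to ensure that the weak formulation of $Lu$ really picks up only the flux-matching constraint and no additional $\delta$-type term.
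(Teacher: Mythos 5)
Your overall route coincides with the paper's: both pass through the closed symmetric nonnegative form $\int_{\mathbb R} h\,u'v'\,dx$ on $L^2(h\,dx)$, invoke the representation theorem to obtain a unique self-adjoint operator, read off $\sigma(L)\subseteq(-\infty,0]$ from nonnegativity, and identify the associated Hunt process with the solution of \ref{2skewmu} via Dirichlet-form theory (the paper cites \cite{ORT}, Remark 2.6.ii, where you instead run the It\^o--Tanaka computation yourself; your flux-matching derivation of the transmission conditions and the local-time cancellation $(1+\beta_j)f'(z_j^+)=(1-\beta_j)f'(z_j^-)$ are correct and make explicit what the paper leaves to ``the implicit characterization of $\mathcal{D}(T)$'').

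The one step that fails is the Weyl sequence for $0\in\sigma(L)$ when $\mu\neq 0$. On an unbounded component, where $L$ acts as $\tfrac12\partial^2+\mu\partial$ on $L^2(e^{2\mu x}dx)$ (up to the constant factor from $k$), the unitary ground-state transform $u\mapsto v:=e^{\mu x}u$ carries $L$ to $\tfrac12\Delta-\tfrac{\mu^2}{2}$ on $L^2(dx)$; equivalently, for compactly supported $u$ one has $\tfrac12\int e^{2\mu x}|u'|^2dx=\tfrac12\int|v'|^2dx+\tfrac{\mu^2}{2}\int|v|^2dx\geq\tfrac{\mu^2}{2}\|u\|^2_{L^2(e^{2\mu x}dx)}$. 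Hence no sequence supported in such a component can achieve $\|Lu_n\|/\|u_n\|\to0$: in your scaling computation the term $\mu u_n'$ is indeed $O(1/n)$ pointwise, but the exponential weight concentrates both norms near the endpoint of the support where $u_n$ degenerates, and the Rayleigh quotient stays bounded below by $\tfrac{\mu^2}{2}$. The bounded middle component cannot carry a Weyl sequence either, and solving $Lu=0$ explicitly gives $u=c\int^x h^{-1}+d\notin L^2(h\,dx)$ unless $u\equiv 0$, so $0$ is not an eigenvalue; the essential spectrum sits in $(-\infty,-\mu^2/2]$. The assertion $0\in\sigma(L)$ is therefore only justified for $\mu=0$, where $h$ is bounded above and below and translated (not rescaled) bumps do give a Weyl sequence. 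To be fair, the paper's own proof offers no argument for this point beyond ``hence the conclusions on its spectrum'', so the difficulty lies as much in the statement as in your proof; everything else in your proposal is sound.
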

\begin{proof}
First of all notice that the measure $\nu(dx):=h(x)dx$ is not a finite measure.
The form 
\begin{equation} \label{formq}
q: H^1_0(h(x)dx) \times H^1_0(h(x)dx) \to \mathbb{R} \text{  defined by  } (f,g)\mapsto \int_\mathbb{R} f' g'  h(x)dx,
\end{equation}
is symmetric, semibounded and closed with domain $\mathcal{Q}(q)=H^1_0(h(x)dx) \subseteq L^2(h(x)dx)$. 
Therefore there exists a unique operator $T$ with $\mathcal{D}(T)\subseteq \mathcal{Q}(q)$ such that $q(u,v)=-<u,Tv>_\nu$ (see for example Corollary 1.3.1 in \cite{Fuku}). Moreover the operator $T$ is self-adjoint.\\
One can show that $(2 L,\mathcal{D}(L)) =(T,\mathcal{D}(T))$ using the implicit characterization of $\mathcal{D}(T)$. Therefore the operator $2 L$ is self-adjoint, hence the conclusions on its spectrum.

We now apply the results presented in the recent paper \cite{ORT}, Remark 2.6.ii: the Hunt process whose semigroup is associated to the closed form $\left(q,\mathcal{Q}(q)\right)$ in (\ref{formq}) is the SBM with drift with semipermeable barriers. By uniqueness of the self-adjoint operator associated to the form we conclude that the operator $\left(L,\mathcal{D}(L)\right)$ is the infinitesimal generator of this SBM.
\end{proof}

\begin{remark}
\begin{enumerate}[(i)]
\item The same lemma holds for the $\beta_1$-SBM with drift, 
and also for the driftless processes $(\beta_1,\beta_2)$-SBM and $\beta_1$-SBM ($\mu=0$); 
\item As an alternative to (\ref{operator}), one can express the infinitesimal generator for the $(\beta_1,\beta_2)$-SBM with drift as
\[\begin{cases} A=\frac{1}{2 k(x)}\frac{d}{dx}\left(k(x) \frac{d}{dx}\right)+\mu \frac{d}{dx}\\
\mathcal{D}(A):=\left\{ \psi \in H^1(dx); \ k \psi ' \in H^1(dx) \right\}
\end{cases}\]
with $k(x)$ defined in (\ref{k2skew}). In that case, $A$ is not self-adjoint.
\item One can show, using Hille-Yoshida theorem, that the operator $(L,\mathcal{D}(L))$ is sectorial since it is self adjoint and in particular it is the infinitesimal generator af a strongly-continuous semigroup of contractions.
\end{enumerate}
\end{remark}

Since the infinitesimal generator $(L,\mathcal{D}(L))$ is a sectorial operator, its associated transition semigroup $P_t$ can be represented as:
\[ P_t\varphi(x) =\frac{1}{2\pi i}\int_\Gamma e^{\lambda t} u_{\lambda,\varphi} (x) d\lambda\]
where $\Gamma$ is a contour in the complex $\lambda$ plane around the negative semi-axis $(-\infty,0]$ that contains the spectrum $\sigma(L)$ and $u_{\lambda,\varphi}$ is the resolvent solution to $(\lambda-L)u_{\lambda,\varphi}=\varphi$ for all $\varphi\in L^2(h(x)dx)$ (see for example Theorem 12.31 in \cite{RR}). 
Therefore the transition density satisfies 
\begin{equation} \label{representationdensity}
p (t,x,y)=\frac{1}{2\pi i}\int_{\Gamma} e^{\lambda t} G(x,y;\lambda) d\lambda
\end{equation}
where $G(x,y;\lambda)$ are the Green functions.

\begin{lemma} \label{Greenfunctions}
For each $\lambda \in \mathbb{C}\setminus \mathbb{R}_-$ the Green functions are given by 
\[G(x,y;\lambda) = -2h(y)\frac{ U_+(y,\lambda) U_-(x,\lambda)\mathbbm{1}_{\{x\leq y\}}+U_+(x,\lambda) U_-(y,\lambda)\mathbbm{1}_{\{y<x\}}}{h(x_0)W(U_-,U_+)(x_0,\lambda)}\]
where $U_\pm \in \mathcal{D}(L)$ are the solutions to:
\begin{equation}\label{autoval}
(\lambda -L) U_\pm(x,\lambda)=0, \qquad \lim_{x\to + \infty}\,U_+ (x,\lambda) =0, \quad \lim_{x\to - \infty}\,U_- (x,\lambda) =0, 
\end{equation}
while $W(U_-,U_+)(x_0,\lambda)=U_-(x_0,\lambda)U_+ ' (x_0,\lambda)-U_-'(x_0,\lambda)U_+(x_0,\lambda)$ is the wronskian in $x_0\in\mathbb{R}$.
\end{lemma}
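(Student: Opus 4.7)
The plan is to construct the resolvent kernel of $(L,\mathcal{D}(L))$ by the classical Sturm--Liouville / variation-of-parameters method, adapted to the transmission conditions at $z_1, z_2$ encoded in $\mathcal{D}(L)$. Since $\sigma(L)\subseteq(-\infty,0]$ by the previous lemma, for any $\lambda\in\mathbb{C}\setminus\mathbb{R}_-$ the resolvent $(\lambda-L)^{-1}$ is bounded on $L^2(h\,dx)$, and the equation $(\lambda-L)u=\varphi$ has a unique solution $u=u_{\lambda,\varphi}\in\mathcal{D}(L)$ for each $\varphi\in L^2(h\,dx)$. Rewritten as $(hu')'=2\lambda hu-2h\varphi$, this is a Sturm--Liouville equation with the piecewise smooth weight $h=ke^{2\mu x}$, $k$ locally constant.

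First I would construct the homogeneous solutions $U_\pm$. On each of the three open intervals $(-\infty,z_1)$, $(z_1,z_2)$, $(z_2,+\infty)$ the equation $(\lambda-L)U=0$ reduces to the constant-coefficient ODE $U''+2\mu U'-2\lambda U=0$, whose two linearly independent exponential solutions can be written down explicitly. By standard Weyl--Titchmarsh theory for self-adjoint Sturm--Liouville operators, for $\lambda$ in the resolvent set of $L$ (which includes $\mathbb{C}\setminus\mathbb{R}_-$) the space of $L^2(h\,dx)$-solutions near $+\infty$ (resp.~$-\infty$) is one-dimensional. Membership in $\mathcal{D}(L)$ forces $U$ and $hU'$ to be continuous at $z_1$ and $z_2$; starting from the $L^2$-solution on the outermost interval and propagating it uniquely across both barriers by these transmission conditions yields $U_+\in\mathcal{D}(L)$ decaying at $+\infty$ and $U_-\in\mathcal{D}(L)$ decaying at $-\infty$, each unique up to a multiplicative scalar.

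Next I introduce
\[
\widetilde G(x,y;\lambda) := -\frac{2\,h(y)\,U_-(x\wedge y,\lambda)\,U_+(x\vee y,\lambda)}{h(x_0)\,W(U_-,U_+)(x_0,\lambda)}
\]
and verify that it is the resolvent kernel with respect to Lebesgue measure. Differentiating on each open interval and using $(hU_\pm')'=2\lambda hU_\pm$ yields Abel's identity $(hW(U_-,U_+))'=0$, and continuity of both $U_\pm$ and $hU_\pm'$ across $z_j$ shows that $hW$ does not jump at the barriers; hence $h(x)W(U_-,U_+)(x)$ is globally constant and the denominator is independent of $x_0$. For fixed $y$, $x\mapsto\widetilde G(x,y;\lambda)$ is continuous at $x=y$, satisfies $(\lambda-L)\widetilde G=0$ on $\mathbb{R}\setminus\{y\}$, and a direct computation of the jump of its $x$-derivative at $x=y$ gives
\[
\partial_x\widetilde G(y^+,y;\lambda)-\partial_x\widetilde G(y^-,y;\lambda)=-\frac{2h(y)\,W(U_-,U_+)(y,\lambda)}{h(x_0)\,W(U_-,U_+)(x_0,\lambda)}=-2.
\]
Consequently $(h\widetilde G_x)'$ carries a singular contribution $-2h(y)\delta_y$, so $(\lambda-L)\widetilde G=\delta_y$ as distributions on $\mathbb{R}$; integrating against $\varphi$ and invoking uniqueness of $u_{\lambda,\varphi}$ identifies $\widetilde G$ with the claimed $G$.

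The main obstacle is the careful handling of the transmission conditions at the two barriers. Two delicate points arise: (a) confirming that Abel's identity $(hW)'=0$ persists across $z_1$ and $z_2$, which I would verify by a direct computation exploiting the continuity of both $U_\pm$ and $hU_\pm'$ built into $\mathcal{D}(L)$; and (b) checking that $\widetilde G(\cdot,y;\lambda)\in\mathcal{D}(L)\cap L^2(h\,dx)$, so that the distributional identity $(\lambda-L)\widetilde G=\delta_y$ truly encodes the action of $(\lambda-L)^{-1}$. The $L^2(h\,dx)$-integrability follows from the $L^2$-decay of $U_\pm$ inherited from Weyl--Titchmarsh, and the required transmission properties of $\widetilde G$ at $z_1,z_2$ in the variable $x$ are inherited directly from those of $U_\pm$.
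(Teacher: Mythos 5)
Your proposal is correct and follows essentially the same route as the paper's (much terser) proof: establish that $x\mapsto h(x)W(U_-,U_+)(x,\lambda)$ is globally constant, verify that the candidate kernel solves $(\lambda-L)v=\delta_y$ via the jump of the derivative at $x=y$, and conclude by uniqueness of the resolvent solution. Your expanded treatment of the two delicate points (persistence of Abel's identity across the transmission conditions, and the jump computation yielding exactly $-2$) correctly fills in what the paper leaves as "one can easily prove/check".
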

\begin{proof}
One can easily prove that the function $x\mapsto h(x)W(U_-,U_+)(x,\lambda)$ is constant and check that $x\mapsto G(x,y;\lambda)$ is a solution to $(\lambda-L)v(x)=\delta_{0}(x-y)$ for all $y\in \mathbb{R}$. By uniqueness of the solution the proof is done.
\end{proof}
\begin{remark} Notice that $x \mapsto G(x,y;\lambda) \in \mathcal{D}(L)$, and $y\mapsto G(x,y;\lambda)\in \mathcal{D}(L^*)=\left\{\varphi, \frac{\varphi}{g}\in \mathcal{D}(L)\right\}$ since $L^*g:=hL\left(\frac{g}{h}\right)$ is the adjoint in $L^{2}(dx)$. The same holds for $y\mapsto p(t,x,y)\in\mathcal{D}(L^*)$.
\end{remark}

\subsection{The case of $(\beta_1,\beta_2)$-SBM without drift}
We will now present the method step by step.

\subsubsection{The Green functions} \label{sectionGreen2skew}

The first step is to find the eigenfunctions $U_{+}(x,\lambda)$ and $U_-(x,\lambda)$ of $L$ defined in (\ref{autoval}). The two barriers divide the real line into three intervals over which the functions $U_\pm$ can be constructed as linear combination of the eigenfunctions of the operator $L$ for the eigenvalue $\lambda\in \mathbb{C}\setminus (-\infty, 0]$ that are $u_-(x)=\exp{\left(\sqrt{2\lambda}x \right)},\quad u_+(x)=\exp{\left(-\sqrt{2\lambda}x \right)}.$ Therefore

\[U_-=\begin{cases}
u_- & x\leq z_1,\\
A(\lambda) u_- + B(\lambda) u_+ & z_1\leq x\leq z_2, \\
C(\lambda) u_- + D(\lambda) u_+ & x\geq z_2;
\end{cases}\]
and
\[U_+=\begin{cases}
G(\lambda) u_- + H(\lambda) u_+ & x \leq z_1, \\
E(\lambda) u_- + F(\lambda) u_+ & z_1\leq x \leq z_2, \\
u_+& x \geq z_2,
\end{cases}\]
with eight coefficients to be determined. Notice that since $U_\pm\in \mathcal{D}(L)$, they are continuous functions and have to satisfy the so-called \emph{transmission conditions} derived from the continuity of $x\mapsto k(x)U_\pm(x,\lambda)$. These conditions will determine uniquely the eight coefficients:
\[ \begin{cases}
A(\lambda) = {  (1+\beta_1)^{-1}};\\
B(\lambda) = A(\lambda) {\beta_1 e^{  2 \sqrt{2\lambda} z_1}}\\
C(\lambda) = \left({\beta_1 \beta_2 e^{-2 \sqrt{2\lambda}(z_2-z_1)} + 1}\right){\left( (1+\beta_1 )(1+\beta_2)\right)^{-1}};\\
D(\lambda) = {\left(\beta_1 e^{2 \sqrt{2\lambda} z_1} + \beta_2 e^{  2 \sqrt{2\lambda} z_2 }\right)}{\left( (1+\beta_1 )(1+\beta_2)\right)^{-1}}
\end{cases}
\]
\[\begin{cases}
G(\lambda)=
-\left(\beta_2 e^{ - 2 \sqrt{2\lambda} z_2} + \beta_1 e^{ - 2 \sqrt{2\lambda} z_1} \right){\left(  (1 - \beta_1) (1 - \beta_2) \right)^{-1}}\\
H(\lambda)=
\left({\beta_1\beta_2 e^ {-2 \sqrt{2\lambda} (z_2-z_1)} + 1}\right)
{\left((1 - \beta_1) (1 - \beta_2) \right)^{-1}},\\
E(\lambda)=-
F(\lambda){\beta_2 e^{-2 \sqrt{2\lambda} z_2}}, \\
F(\lambda)={(1-\beta_2)^{-1}}.
\end{cases}\]

The second step is to compute \textbf{the wronskian}. Consider $x_0<z_1$, for example $x_0=z_1-1$. Hence the wronskian is
\[\begin{split} W(x_0) & = -\sqrt{2 \lambda}\frac{\beta_1 \beta_2 e^{-2 \sqrt{2 \lambda} z} +1}{2  k(x_0)} =-2 \sqrt{2\lambda} H(\lambda),\end{split}\]
where we will denote by $z$ the distance between the barriers $z_2-z_1$. This leads to the following
\begin{proposition}\label{greenunique2skew}

The Green functions are given by
\[G(x,y;\lambda) = \frac{1}{\phi(\lambda)}  e^{- \phi(\lambda) |x-y|} \frac{ \sum_{j=1}^4 c_j(y,\beta_1,\beta_2)e^{-\phi(\lambda) a_j(x,y)}}{\beta_1 \beta_2 e^{-2 \phi(\lambda) z}  +   1}.\]
where $\phi(\lambda):=\sqrt{2\lambda}$, $z:=z_2-z_1$ is the distance between the barriers, and

\[\begin{cases}
c_1(y,\beta_1,\beta_2)\equiv 1 		\\
c_2(y,\beta_1,\beta_2)=\left(2\mathbbm{1}_{[z_1,+\infty)}(y)-1\right)\beta_1 \\			
c_3(y,\beta_1,\beta_2)=\left(2\mathbbm{1}_{[z_2,+\infty)}(y)-1\right)\beta_2 \\			
c_4(y,\beta_1,\beta_2)=\left(1-2\mathbbm{1}_{[z_1,z_2)}(y)\right)\beta_1\beta_2 			
\end{cases}
\begin{cases}
a_1(x,y)\equiv 0\\
a_2(x,y)=|y-z_1|+|x-z_1|-|y-x|\\
a_3(x,y)=|y-z_2|+|y-z_2|-|y-x|\\
a_4(x,y)=2\left(z_2-max(x,y,z_1)\right)^+ + 2 \left(min(x,y,z_2)-z_1\right)^+
\end{cases}\]
\end{proposition}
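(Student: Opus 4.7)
The strategy is to substitute the explicit eigenfunctions $U_\pm$ and the wronskian computed above into the Green function representation of Lemma~\ref{Greenfunctions}, and then unfold the resulting expression region by region. As a preliminary, note that $x\mapsto h(x)W(U_-,U_+)(x,\lambda)$ is constant (an Abel-type identity for the divergence-form operator $L$), so the value computed just above at $x_0<z_1$, namely $h(x_0)W(x_0,\lambda)=-\tfrac{\phi(\lambda)}{2}(\beta_1\beta_2 e^{-2\phi(\lambda)z}+1)$, is what enters the formula. This identifies at once both the global prefactor $1/\phi(\lambda)$ and the common denominator $\beta_1\beta_2 e^{-2\phi(\lambda)z}+1$ appearing in the statement.

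It remains to expand the numerator. Split $\mathbb{R}^2$ according to the nine regions $R_i\times R_j$ with $R_1=(-\infty,z_1)$, $R_2=[z_1,z_2)$, $R_3=[z_2,+\infty)$; by the indicator split in Lemma~\ref{Greenfunctions}, only the six cases with $x\le y$ require a separate treatment. In each such case, plug in the appropriate piecewise branch of $U_-(x,\lambda)$ and $U_+(y,\lambda)$ and multiply by $-2h(y)$. Since both $h(y)$ and the denominators of the coefficients $A,\ldots,H$ are products of factors $(1\pm\beta_1)(1\pm\beta_2)$, these factors combine into a polynomial in $\beta_1,\beta_2$ of degree at most two, which naturally splits into four exponential contributions indexed by the monomials $1,\beta_1,\beta_2,\beta_1\beta_2$.

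The third step is the identification of these contributions with the $c_j e^{-\phi(\lambda)a_j}$. Factor $e^{-\phi(\lambda)|x-y|}$ out of every term and use the identities
\[|y-z_j|+|x-z_j|-|y-x|=2(\min(x,y)-z_j)^+\mathbbm{1}_{\{x\wedge y\ge z_j\}}+2(z_j-\max(x,y))^+\mathbbm{1}_{\{x\vee y\le z_j\}}\]
for $j=1,2$, together with the corresponding two-barrier identity defining $a_4$. The sign carried by each monomial $\beta_1$, $\beta_2$, $\beta_1\beta_2$ in the resulting polynomial depends only on whether $y$ lies to the right of $z_1$, to the right of $z_2$, or inside $[z_1,z_2)$ respectively, which is precisely the content of $c_2$, $c_3$, $c_4$.

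The main obstacle lies in the uniform bookkeeping across the six regions: one has to verify that the four exponents produced in each case always reduce to the same functions $|x-y|+a_j(x,y)$, and that the six instances of the sign conspire to give a formula in which $c_j$ depends only on $y$. A useful cross-check is the relation $G(x,y;\lambda)/h(y)=G(y,x;\lambda)/h(x)$ forced by the self-adjointness of $L$ on $L^2(h(x)dx)$ proved in the previous lemma: since the $a_j$'s are symmetric in $(x,y)$ while the $c_j$'s depend only on $y$, the consistency of the proposed formula under this symmetry provides a stringent sanity check that must hold region by region, and once it is confirmed the case analysis is essentially complete.
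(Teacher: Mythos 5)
Your proposal is correct and follows essentially the same route as the paper: substitute the explicit $U_\pm$ and the Wronskian into the representation of Lemma~\ref{Greenfunctions}, and verify region by region that the resulting products of exponentials reorganize into $e^{-\phi(\lambda)|x-y|}\sum_j c_j(y)e^{-\phi(\lambda)a_j(x,y)}$ over the common denominator. The only difference is one of thoroughness — the paper carries out a single case ($x<z_1<z_2<y$, where all $a_j$ vanish and $\sum_j c_j(y)=4k(y)$) and declares the remaining five ``similar,'' whereas you spell out the uniform bookkeeping and add the symmetry check $G(x,y;\lambda)/h(y)=G(y,x;\lambda)/h(x)$, which in particular flags the evident typo in $a_3$ (it should read $|y-z_2|+|x-z_2|-|y-x|$).
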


\begin{proof} We will only do the computations in the case $x<z_1<z_2<y$, the other cases are similar. From Lemma \ref{Greenfunctions}, since $h \equiv k$ and chosing $x_0<z_1$, the Green function is of the following form
\[G(x,y;\lambda) = -2k(y)\frac{ U_+(y,\lambda) U_-(x,\lambda)}{k(x_0)W(U_-,U_+)(x_0)}=-2k(y)\frac{ u_+(y,\lambda) u_-(x,\lambda)}{- \frac12 \sqrt{2\lambda}(1+\beta_1\beta_2 e^{-2\sqrt{2\lambda} z})}= \frac1{\sqrt{2\lambda}} \frac{4 k(y) e^{-\sqrt{ 2\lambda} (y-x)}}{1+\beta_1\beta_2 e^{-2\sqrt{2\lambda} z}}.\]
It is then sufficient to check that $a_j(x,y)= 0$ for $j\in\{1,2,3,4\}$ and $\sum_{j=1}^4 c_j(y,\beta)=4 k(y)$.
\end{proof}

\begin{remark}
\begin{enumerate}[(i)]
\item The function $\phi$ is well defined as bijection between $\mathbb{C}\setminus {(-\infty,0]}$ and $\{ \zeta \in \mathbb{C}; \ \Re(\zeta)>0\}$.
\item The denominator $\lambda \mapsto 1+\beta_1\beta_2 e^{-2 \phi(\lambda) z}$ has no zero in $\mathbb{C}\setminus {(-\infty,0]}$ since $\Re \phi(\lambda)>0$.
\item $a_j(x,y)\geq 0$ for $j\in\{1,2,3,4\}$.
\end{enumerate}
\end{remark}

\subsubsection{The transition density as (contour) integral} \label{changegreen}

Since the Green functions depend on $\lambda$ only through $\phi(\lambda)=\sqrt{2\lambda}$, we can apply the change of variable $\displaystyle{\lambda \mapsto \phi(\lambda)=:\xi}$ to the integral appearing in (\ref{representationdensity}):
\[\int_\Gamma e^{\frac{\phi(\lambda)^2}{2} t} \ \overline{G}(x,y;\phi(\lambda)) \ d\phi(\lambda)=\int_{\phi(\Gamma)} e^{\frac{\xi^2}{2} t} \ \overline{G}(x,y;\xi) \ d\xi\]
where $\overline{G}(x,y;\phi(\lambda))= \phi(\lambda) G(x,y;\lambda)$ 
and $G(x,y,\lambda)$ given in Proposition \ref{greenunique2skew}.\\
Since the integrand  $e^{\frac{\xi^2}2 t}\overline{G}(x,y;\xi)$ is holomorphic in the closed subset of the complex plane between $i \mathbb{R}$ and $\phi(\Gamma)$, we could deform (shrink) the contour $\phi(\Gamma)$ to the imaginary line by an homotopy. Indeed, if we denote by $M$ the unique point with imaginary part $u$ in $\phi(\Gamma)$ (as in Figure~\ref{contourshrink}), it is possible to shrink the contour $\phi(\Gamma)$ to $i \mathbb{R}$ if the following lemma holds:

\begin{lemma} \label{smallintegral}
Consider the function 
\[\overline{G}(x,y,\xi)= e^{- \xi |x-y|} \frac{ \sum_{j=1}^4 c_j(y,\beta_1,\beta_2)e^{-\xi a_j(x,y)}}{\beta_1 \beta_2 e^{-2 \xi z}  +   1}.\] 
Then
\[\lim_{u \to \pm\infty} \int_{\rho_M} e^{\frac{\xi^2}2 t} \ \overline{G}(x,y;\xi) \ d\xi =0, \]
where $\rho_M$ is the segment in the figure connecting the point  $M$ with its projection on $i \mathbb{R}$, $M'=(0,u)$.
\end{lemma}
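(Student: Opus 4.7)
The plan is to apply a straightforward $ML$-type estimate along the segment $\rho_M$, exploiting the Gaussian decay of $e^{\xi^2 t/2}$ in the imaginary direction. I parameterize $\xi=s+iu$ with $s\in[0,s_M]$, where $s_M:=\mathrm{Re}(M)$ is the real part of the endpoint of $\phi(\Gamma)$ at height $u$. Under a standard Hankel-type contour $\Gamma$ around $(-\infty,0]$, the image $\phi(\Gamma)$ stays in the right half-plane with $s_M$ bounded (or at worst growing like $o(|u|)$) as $|u|\to\infty$, so the length of $\rho_M$ is controlled; the whole argument will rest on showing that the integrand is uniformly small on $\rho_M$ when $|u|$ is large.

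The central ingredient is the estimate $|e^{\xi^2 t/2}|=e^{(s^2-u^2)t/2}$, which is the only place where imaginary part $u$ enters meaningfully. For the numerator factors of $\overline G$, since $s\geq 0$ and $a_j(x,y)\geq 0$ and $|x-y|\geq 0$, I have $|e^{-\xi|x-y|}|=e^{-s|x-y|}\leq 1$ and $|e^{-\xi a_j(x,y)}|=e^{-s a_j(x,y)}\leq 1$, so
\[\Bigl|\sum_{j=1}^4 c_j(y,\beta_1,\beta_2)\,e^{-\xi a_j(x,y)}\Bigr|\leq \sum_{j=1}^4 |c_j(y,\beta_1,\beta_2)|\leq (1+|\beta_1|)(1+|\beta_2|).\]
For the denominator, the key observation is that $|\beta_1\beta_2|<1$ (since $\beta_1,\beta_2\in(-1,1)$) and $|e^{-2\xi z}|=e^{-2sz}\leq 1$ for $s\geq 0$, so
\[|1+\beta_1\beta_2 e^{-2\xi z}|\geq 1-|\beta_1\beta_2|\,e^{-2sz}\geq 1-|\beta_1\beta_2|>0\]
uniformly in $\xi\in\rho_M$. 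Combining these bounds yields a constant $C=C(\beta_1,\beta_2)$ independent of $u$ such that $|\overline G(x,y;\xi)|\leq C$ on $\rho_M$.

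Putting the pieces together, on the segment $\rho_M$ of length $s_M$,
\[\Bigl|\int_{\rho_M}e^{\xi^2 t/2}\overline G(x,y;\xi)\,d\xi\Bigr|\leq C\int_0^{s_M}e^{(s^2-u^2)t/2}\,ds\leq C\,s_M\,e^{(s_M^2-u^2)t/2},\]
which tends to $0$ as $u\to\pm\infty$, because for any reasonable Hankel contour $s_M=o(|u|)$ (in fact bounded in typical implementations), so the factor $e^{-u^2 t/2}$ dominates. The only delicate point I anticipate is justifying the uniform control on $s_M$: it amounts to verifying that the contour $\Gamma$ can be chosen so that $\phi(\Gamma)$ has bounded real part at large height, which is standard for sectorial operators and is implicit in the contour representation (\ref{representationdensity}).
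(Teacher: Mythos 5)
Your proof is correct and follows essentially the same route as the paper's: an $ML$-estimate along $\rho_M$ using $|e^{\xi^2 t/2}|=e^{(s^2-u^2)t/2}$, the bounds $e^{-sa_j}\leq 1$ (valid since $a_j\geq 0$) for the numerator, and $|1+\beta_1\beta_2 e^{-2\xi z}|\geq 1-|\beta_1\beta_2|e^{-2sz}$ for the denominator. The ``delicate point'' you flag is handled in the paper simply by asserting $\ell(u)=s_M\to 0$ as $|u|\to\infty$ (which indeed holds for $\phi(\lambda)=\sqrt{2\lambda}$ applied to a Hankel contour), so your argument matches theirs at the same level of rigor.
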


\begin{proof}
Let us show that the absolute value converges to zero:
\[\left| \int_{\rho_M} e^{\frac{\xi^2}{2} t} \ \overline{G}(x,y;\xi) \ d\xi \right| \leq \int_{\rho_M} \left| e^{\frac{\xi^2}{2} t} \ \overline{G}(x,y;\xi)\right| \ d\xi =  \int_0^{\ell(u)} e^{\frac{(v^2 -u^2)}{2} t} \ \left|\overline{G}(x,y;i u+ v)\right| \ dv \]
with $\ell(u) := \left|M'-M \right|$ (hence $M=(\ell(u),u)$) and $\lim_{u\to\infty} \ell(u)=0$.
Let us notice that
\[\left|\overline{G}(x,y;i u+ v)\right|\leq e^{-v |x-y|} \frac{ \sum_{j=1}^4 \left| c_j(y,\beta_1,\beta_2)\right| e^{- v a_j(x,y)}}{\left|\beta_1 \beta_2 e^{-2 (i u +v) z}  +   1\right|}\leq e^{-v |x-y|} \frac{ \sum_{j=1}^4 \left| c_j(y,\beta_1,\beta_2)\right| e^{- v a_j(x,y)}}{1- \left|\beta_1 \beta_2\right| e^{-2 v z}},\]
therefore
\[\left| \int_{\rho_M} e^{\frac{\xi^2}{2} t} \ \overline{G}(x,y;\xi) \ d\xi \right| \leq e^{-\frac{u^2}2 t}\int_0^{\ell(u)} e^{\frac{v^2}{2} t} \ e^{-v |x-y|} \frac{ \sum_{j=1}^4 \left| c_j(y,\beta_1,\beta_2)\right| e^{- v a_j(x,y)}}{1- \left|\beta_1 \beta_2\right| e^{-2 v z}} \ dv\]
that clearly converges to zero if $\left|u\right|$ goes to infinity.
\end{proof}

\begin{figure}[H]
\begin{center}
\begin{tikzpicture}
\path(-2.8,2.8) node{a)};
\draw[-stealth] (-3,0) -- (3,0) node[above left]{$\mathbb{R}$};
\draw[-stealth] (0,-3) -- (0,3) node[below right]{$i\mathbb{R}$};

\draw[red] (-3,0) -- (0,0);
\fill[red] (0,0) circle(0.05);

\draw[blue] (-3,-1) -- node[above]{$\Gamma$} (0,-1) arc(-90:90:1)  -- (-3,1);
\draw[blue,->] (-2,-1) -- (-1,-1);
\draw[blue,->] (-1.5,1) -- (-2,1);

\draw[green] (0.5,-3) ..controls(0.6,-2.5) and (0.7,-2.1).. (1.4,-1.4) arc(-45:45:2) ..controls(0.7,2.1) and (0.6,2.5).. node[above right]{$\phi(\Gamma)$} (0.5,3);
\draw[green,->] (2,0) arc(0:5:2);

\end{tikzpicture} 
\hspace{3cm}
\begin{tikzpicture}
\path(-2.3,2.8) node{b)};
\draw[-stealth] (-1,0) -- (3,0) node[above left]{$\mathbb{R}$};
\draw[-stealth] (0,-3) -- (0,3);

\draw[green] (0.5,-3) ..controls(0.6,-2.5) and (0.7,-2.1).. node[below right]{$\phi(\Gamma)$} (1.4,-1.4) arc(-45:45:2) ..controls(0.7,2.1) and (0.6,2.5).. (0.5,3);

\draw[magenta] (0,2.25) node[left]{$M'=(0,u)$} -- node[below]{$\rho_M$} (0.73,2.25) node[right]{$M$};
\fill[magenta] (0,2.25) circle (0.05) (0.73,2.25) circle (0.05);
\draw[magenta] (0,-2.25) node[left]{$-M'=(0,-u)$} -- (0.73,-2.25);
\fill[magenta] (0,-2.25) circle (0.05);

\end{tikzpicture}
\captionsetup{singlelinecheck=off}
\caption[countour]{
\begin{enumerate}[a)]
\item The picture shows the green image of the blue contour $\Gamma$ under $\phi$. The spectrum of the operator $(L,\mathcal{D}(L))$ is contained in the red semi-axis $(-\infty,0]$, which coincides with the complement of the domain of $\phi$.
\item In this figure one sees the magenta segment $\rho_M$ connecting the unique point $M$ in $\phi(\Gamma)$ with imaginary part $u$ to its projection $M'$ on the imaginary line. The homotopy $H:[0,1]\times \mathbb{R}\to \mathbb{R}^2$ that deforms $\phi(\Gamma)$ into $i \mathbb{R}$ is given by $H(t,u)=M'(1-t)+t M$.
\end{enumerate}
}\label{contourshrink}
\end{center}
\end{figure}

Therefore the integral in (\ref{representationdensity}) becomes (with $\xi=i w$)
\begin{equation} \label{tdf2skewwip}
p^{(\beta_1,\beta_2)}(t,x,y)=\frac{1}{2\pi} \int_{\mathbb{R}} e^{-\frac{w^2}{2}t}  e^{- i w |x-y|} \frac{ \sum_{j=1}^4 c_j(y,\beta_1,\beta_2)e^{-i w a_j(x,y)}}{\beta_1 \beta_2 e^{-2 i w z}  +   1} dw.
\end{equation}
One can also rewrite it using the functions $h$ defined by equation (\ref{thetafunction}):
\[p^{(\beta_1,\beta_2)}(t,x,y)= \sum_{j=1}^4 c_j(y,\beta_1,\beta_2) h\left(\frac t 2, -\left(a_j(x,y)+|x-y|\right),\beta_1\beta_2,-2z\right),\]
which agrees with the results in \cite{GOO}. Nevertheless, since $\left|\beta_1\beta_2\right|<1$, we can explicit further the expression (\ref{tdf2skewwip}).

\subsubsection{The transition density as series of Fourier transforms}
\begin{proposition} \label{transitiondensity2skew}
The transition density of the $(\beta_1,\beta_2)$-SBM has the following expansion 
\begin{equation} \label{tdf2skew}
p^{(\beta_1,\beta_2)}(t,x,y)=p^{(0,0)}(t,x,y) \sum_{k=0}^{\infty}  (-\beta_1 \beta_2)^k  \sum_{j=1}^4 c_j(y,\beta_1,\beta_2)e^{ -\frac{\left(a_j(x,y)+2 z k\right)^2}{2 t}}  e^{-|x-y|\frac{a_j(x,y)+2 z k}{t}}
\end{equation}
\end{proposition}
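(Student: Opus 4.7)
The plan is to start from the contour-integral representation (\ref{tdf2skewwip}) and turn the rational factor in $w$ into a convergent power series, then integrate term by term. Concretely, because $|\beta_1\beta_2|<1$ and $|e^{-2iwz}|=1$, the denominator satisfies
\[
\frac{1}{1+\beta_1\beta_2\,e^{-2iwz}}=\sum_{k=0}^{\infty}(-\beta_1\beta_2)^{k}\,e^{-2iwzk},
\]
uniformly in $w\in\mathbb{R}$. I would substitute this geometric expansion into the integrand of (\ref{tdf2skewwip}) and exchange the sum with the integral.

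The exchange is justified by dominated convergence (or Fubini–Tonelli on $\mathbb{N}\times\mathbb{R}$): the partial sums are dominated in absolute value by
\[
e^{-\frac{w^2}{2}t}\,\frac{\sum_{j=1}^{4}|c_j(y,\beta_1,\beta_2)|}{1-|\beta_1\beta_2|},
\]
which is integrable in $w$ thanks to the Gaussian factor. After the swap I obtain
\[
p^{(\beta_1,\beta_2)}(t,x,y)=\sum_{k=0}^{\infty}(-\beta_1\beta_2)^{k}\sum_{j=1}^{4}c_j(y,\beta_1,\beta_2)\,I_{j,k}(t,x,y),
\]
with
\[
I_{j,k}(t,x,y)=\frac{1}{2\pi}\int_{\mathbb{R}}e^{-\frac{w^2}{2}t}\,e^{-iw\left(|x-y|+a_j(x,y)+2zk\right)}\,dw.
\]

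Each $I_{j,k}$ is the Fourier transform of a centered Gaussian evaluated at the real shift $A_{j,k}:=|x-y|+a_j(x,y)+2zk$, hence
\[
I_{j,k}(t,x,y)=\frac{1}{\sqrt{2\pi t}}\,e^{-\frac{A_{j,k}^{2}}{2t}}.
\]
Finally I expand the square
\[
A_{j,k}^{2}=|x-y|^{2}+2|x-y|\bigl(a_j(x,y)+2zk\bigr)+\bigl(a_j(x,y)+2zk\bigr)^{2}
\]
and factor $\frac{1}{\sqrt{2\pi t}}e^{-\frac{(x-y)^{2}}{2t}}=p^{(0,0)}(t,x,y)$ out of the double sum, which produces exactly formula (\ref{tdf2skew}).

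The only delicate step is the interchange of the infinite sum and the integral; but since the dominating bound above is integrable, this is routine. The rest reduces to the classical Gaussian Fourier inversion and an algebraic rewriting of the exponent, so no substantial obstacle is expected.
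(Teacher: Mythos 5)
Your proposal is correct and follows essentially the same route as the paper: geometric expansion of the denominator of (\ref{tdf2skewwip}), interchange of sum and integral justified by the same integrable dominating bound, Gaussian Fourier inversion of each term, and an algebraic rewriting of the exponent (the paper uses the identity $g_1(a+b)=g_1(a)g_1(b)e^{-ab}$ where you expand the square $A_{j,k}^2$, which is the same computation). No gaps.
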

where $p^{(0,0)}(t,x,y)$ is the transition density function of the Brownian motion.

\begin{proof}
Let us consider the expression (\ref{tdf2skewwip}). The denominator can be seen as the sum of a geometric series
\[\frac{1}{1+\beta_1\beta_2 e^{-2 i w z}}=\sum_{k=0}^\infty (-\beta_1\beta_2)^k e^{-2 i w z k}\]
since $|\beta_1\beta_2 e^{-2 i w z}|=|\beta_1\beta_2|<1$.

Therefore the density can be written as
\[\begin{split}
p^{(\beta_1,\beta_2)}(t,x,y) & =\frac{1}{2\pi }\int_{\mathbb{R}}  \sum_{k=0}^{\infty}  (-\beta_1 \beta_2)^k e^{-\frac{w^2}{2}t} e^{-i w |x-y|} \sum_{j=1}^4 c_j(y,\beta_1,\beta_2) e^{- iw (a_j(x,y)+2 z k)} dw.
\end{split}\]

We can exchange integral and series, because the series of absolute values $e^{-\frac{w^2}{2}t} \frac{1}{1-|\beta_1 \beta_2|}$ is integrable.

We conclude that the transition density is a series of Fourier transforms:

\begin{equation}\label{Fourier2skew}
\begin{split}
p^{(\beta_1,\beta_2)}(t,x,y) & =\sum_{k=0}^{\infty}  (-\beta_1 \beta_2)^k  \sum_{j=1}^4 c_j(y,\beta_1,\beta_2) \frac{1}{2\pi }\int_{\mathbb{R}}  e^{-\frac{w^2}{2}t} e^{-i w |x-y|}  e^{- iw (a_j(x,y)+2 z k)} dw\\
& = \frac{1}{\sqrt{2\pi }} \sum_{k=0}^{\infty}  (-\beta_1 \beta_2)^k  \sum_{j=1}^4 c_j(y,\beta_1,\beta_2)  \hat g_t (a_j(x,y)+2 z k+|x-y|)\\
& =  \frac{1}{\sqrt{2\pi t}} \sum_{k=0}^{\infty}  (-\beta_1 \beta_2)^k  \sum_{j=1}^4 c_j(y,\beta_1,\beta_2)  g_1 \left(\frac{a_j(x,y)+2 z k+|x-y|}{\sqrt{t}}\right)
\end{split}
\end{equation}
where $g_t(w):=e^{-\frac{w^2}{2} t}=g_1(w \sqrt{t})$ and its Fourier transform satisfies $\hat g_t(\omega)=\frac{1}{\sqrt{t}} g_t(\frac {\omega}{ t })= \frac{1}{\sqrt{t}} g_1\left(\frac{\omega}{\sqrt{t}}\right). $\\
We notice that $g_1(a+b)=g_1(a)g_1(b)e^{-a b}$ hence we can write the density as
\[
p^{(\beta_1,\beta_2)}(t,x,y) = \frac{1}{\sqrt{2 \pi t}} \, g_1\left(\frac{|x-y|}{\sqrt{t}}\right)\sum_{k=0}^{\infty}  (-\beta_1 \beta_2)^k  \sum_{j=1}^4 c_j(y,\beta_1,\beta_2) g_1 \left(\frac{a_j(x,y)+2 z k}{\sqrt{t}}\right) e^{-\frac{|x-y|}{t}\left(a_j(x,y)+2 z k\right)}.
\]
Using the identity $p^{(0,0)}(t,x,y)=\frac{1}{\sqrt{2 \pi t}} g_1\left(\frac{|x-y|}{\sqrt{t}}\right)$ we conclude and obtain (\ref{tdf2skew}).
\end{proof}

\subsection{The case of the $(\beta_1,\beta_2)$-SBM with drift}

\subsubsection{Expansion of the transition density in the case of one barrier and drift}
In this subsection we propose an explicit computation of the transition density function of the $\beta_1$-SBM with constant drift $\mu$, solution to \ref{1skewmu}.

\begin{proposition} \label{tdf1skewmu}
The transition density for the SBM with constant drift $\mu$ and barrier in $z_1$ satisfies
\[p^{(\beta_1)}_{\mu}(t,x,y)=p^{(0)}_{\mu}(t,x,y)v^{(\beta_1)}_\mu(t,x,y),\]
where $p^{(0)}_\mu(t,x,y)$ 
is the transition density of the Brownian motion with drift $\mu$ (without skew), and 
\[\begin{split} v^{(\beta_1)}_\mu(t,x,y):= & \left(1 - exp{\left(-\frac{2x_1y_1}{t}\right)}\right) \mathbbm{1}_{\{x_1y_1 >0\}}+ \left[1+\beta_1 \left(2 \mathbbm{1}_{[z_1,+\infty)}(y) - 1\right)\right] \exp{\left(-\frac{2x_1y_1}{t} \mathbbm{1}_{\{x_1y_1 >0\}}\right)} \cdot \\
 & \cdot 
\left[1-\beta_1\mu\sqrt{2\pi t}\exp{\left(\frac{\left(|x_1|+|y_1|+t\beta_1 \mu\right)^2}{2t}\right)}\Phi^c\left(\frac{|x_1|+|y_1|+t\beta_1 \mu}{\sqrt{t}}\right)\right],
\end{split}\] 
where $x_1:=x-z_1$, $y_1:=y-z_1$ and $\Phi^c(y):=\frac{1}{\sqrt{2\pi}}\int_y^\infty e^{-\frac{u^2}{2}} du$ is the queue of a standard Gaussian law.
\end{proposition}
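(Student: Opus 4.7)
The plan is to apply the Green function method of Section~\ref{method} to the operator $L$ associated with the $\beta_1$-SBM with drift $\mu$, for which $h(x) = k(x)e^{2\mu x}$ with $k$ piecewise constant having a single jump at $z_1$. On each side of $z_1$ the eigenvalue equation $(\lambda-L)U=0$ becomes $\tfrac12 U'' + \mu U' = \lambda U$, with characteristic roots $-\mu\pm\phi(\lambda)$, where $\phi(\lambda):=\sqrt{\mu^2+2\lambda}$. Selecting the decaying branches at $\pm\infty$ and enforcing the transmission conditions (continuity of $U$ and of $hU'$, which reduces to $(1-\beta_1)U'(z_1^-) = (1+\beta_1)U'(z_1^+)$) uniquely determines $U_\pm$. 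A short computation of the wronskian then yields the compact expression $h(x_0)\,W(U_-,U_+)(x_0) = -(\phi(\lambda)+\mu\beta_1)$.

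Writing $G(x,y;\lambda)$ via Lemma~\ref{Greenfunctions} gives, in each of the four sub-cases determined by the positions of $x,y$ with respect to $z_1$, the form $e^{\mu(y-x)}$ times a rational expression in $\phi(\lambda)$. The extra denominator factor $\phi(\lambda)+\mu\beta_1$, absent when $\mu=0$, is the origin of the complementary Gaussian cdf $\Phi^c$ in the final formula. The decisive algebraic identity is
\[
\frac{\phi+\mu}{\phi+\mu\beta_1} = 1 + \frac{\mu(1-\beta_1)}{\phi+\mu\beta_1},
\]
which splits the Green function into a part that ultimately inverts to linear combinations of Gaussian kernels and a part carrying an additional factor $1/(\xi+\mu\beta_1)$ after the change of variable.

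For the inversion I would make the substitution $\xi = \phi(\lambda)$, so that $\lambda = (\xi^2-\mu^2)/2$, $d\lambda = \xi\,d\xi$ and $e^{\lambda t} = e^{-\mu^2 t/2}e^{\xi^2 t/2}$, and deform $\phi(\Gamma)$ to the imaginary axis via the argument of Lemma~\ref{smallintegral}. The Gaussian part inverts by the usual Fourier formula; the residual part is handled via the identity
\[
\frac{1}{2\pi}\int_{\mathbb{R}} \frac{e^{-w^2 t/2 - i w a}}{i w + b}\,dw \;=\; e^{ab + b^2 t/2}\,\Phi^c\!\left(\frac{a+bt}{\sqrt{t}}\right), \qquad \Re b > 0,
\]
obtained from $(iw+b)^{-1} = \int_0^\infty e^{-(iw+b)s}\,ds$ followed by exchange of integrals and completion of the square. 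Applied with $a = |x_1|+|y_1|$ and $b = \mu\beta_1$, this produces precisely the factor $-\beta_1\mu\sqrt{2\pi t}\,e^{(|x_1|+|y_1|+t\beta_1\mu)^2/(2t)}\Phi^c((|x_1|+|y_1|+t\beta_1\mu)/\sqrt{t})$ appearing in the statement.

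It remains to factor out $p^{(0)}_\mu(t,x,y) = (2\pi t)^{-1/2}e^{-(y-x-\mu t)^2/(2t)}$ and reconcile the Gaussian contributions with the term $(1-e^{-2x_1 y_1/t})\mathbbm{1}_{\{x_1 y_1 > 0\}}$ using the identity $(|x_1|+|y_1|)^2 - (y-x)^2 = 4 x_1 y_1\,\mathbbm{1}_{\{x_1 y_1 > 0\}}$. The main obstacle is this case-by-case bookkeeping: the indicators $\mathbbm{1}_{\{x_1 y_1 > 0\}}$ and $2\mathbbm{1}_{[z_1,+\infty)}(y) - 1$ package the four sub-cases into the single closed-form expression for $v^{(\beta_1)}_\mu$, and one must check sub-case by sub-case that the signs and prefactors extracted from the Green function agree with this unified formula.
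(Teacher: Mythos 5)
Your route is essentially the paper's: the same Green-function construction with $h=ke^{2\mu x}$, the same wronskian $h(x_0)W(U_-,U_+)(x_0)=-(\phi(\lambda)+\beta_1\mu)$, the change of variable $\xi=\phi(\lambda)$, deformation to the imaginary axis, and a split of the rational factor into a constant part (giving Gaussians) plus a $1/(\xi+\beta_1\mu)$ part (giving $\Phi^c$). Your closed-form identity
\[
\frac{1}{2\pi}\int_{\mathbb{R}} \frac{e^{-w^2 t/2 - i w a}}{i w + b}\,dw = e^{ab + b^2 t/2}\,\Phi^c\!\left(\frac{a+bt}{\sqrt{t}}\right),\qquad \Re b>0,
\]
is correct and neatly packages what the paper does in two steps (Lemma \ref{Fourier1} plus an explicit convolution with the Gaussian).

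There is, however, one genuine gap: your argument as written only covers the case $\beta_1\mu>0$. When $\beta_1\mu<0$ two things go wrong simultaneously. First, the integrand has a simple pole at $\xi=-\beta_1\mu$ lying on the positive real axis \emph{between} $\phi(\Gamma)$ and $i\mathbb{R}$, so the homotopy to the imaginary line invoked from Lemma \ref{smallintegral} is obstructed and picks up a residue (this is the decomposition $\phi(\Gamma)=\phi'\cup\gamma$ of Figure \ref{contourdrift}.b2 and the term $(*)$ in the paper). Second, your Fourier identity is stated for $\Re b>0$ and you apply it with $b=\beta_1\mu$; for $b<0$ the same computation via $(iw+b)^{-1}=-\int_{-\infty}^{0}e^{-(iw+b)s}\,ds$ yields an extra additive term $-e^{ab+b^2t/2}$ beyond the $\Phi^c$ expression. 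The point of the paper's proof is that these two corrections (the residue $(*)$ and the boundary term $(**)$ from the convolution) cancel exactly, which is why the final formula is sign-independent; your proposal neither performs this cancellation nor restricts the statement to $\beta_1\mu>0$, so as it stands it proves the proposition only on half of the parameter range. The remaining bookkeeping (sub-cases in $x,y$ relative to $z_1$, the identity $(|x_1|+|y_1|)^2-(y-x)^2=4x_1y_1\mathbbm{1}_{\{x_1y_1>0\}}$, factoring out $p^{(0)}_\mu$) is correctly identified and matches the paper.
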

This result appears in \cite{EMloc} for a barrier in zero although it holds for any barrier. The authors prove it using the trajectorial definition of the SBM.\\
We provide here a completely different proof based on the generalization of the Green function method. Indeed the infinitesimal generator of the process is a generalization of (\ref{Lkoperator}) with the function $h(x)=k(x)e^{2\mu x}$ instead of $k(x)$. 
We will denote by $\beta:=\beta_1 \in (-1,1)\setminus \{0\}$ the unique skewness parameter.
The same method we develop here, will also provide the transition density for the $(\beta_1,\beta_2)$-SBM with drift even though trickier technical issues are involved.

\subsubsection*{A. The Green functions}
When there is a drift $\mu\neq 0$, the functions $U_\pm(x,\lambda)$ solutions to (\ref{autoval}) are linear combinations of the two eigenfunctions for the eigenvalue $\lambda\in \mathbb{C}\setminus (-\infty,0]$ $u_{\pm}(x,\lambda)=\exp{\left(-\mu x \mp \sqrt{\mu^2+2\lambda}\, x\right)}$. The coefficients are uniquely determined using the continuity and the transmission conditions:
\[U_-=\begin{cases}
u_- & \text{ on } (-\infty,z_1],\\
A(\lambda) u_- + B(\lambda)  u_+ & \text{ on } [z_1,+\infty),
\end{cases} \quad 
\textrm{and} 
\quad U_+=\begin{cases}
E(\lambda) u_- + F(\lambda) u_+ &  \text{ on } (-\infty,z_1], \\
u_+ & \text{ on } [z_1,+\infty),
\end{cases}\]
with
\[\begin{cases}  A(\lambda) = \frac{1}{\beta+1} \left(1+\frac{\beta\mu}{\sqrt{\mu^2+2\lambda}}\right), & B(\lambda) =(1-A(\lambda))e^{2\sqrt{\mu^2+2\lambda}z_1}\\
F(\lambda) =\frac{1}{(1-\beta)} \left(1+\frac{\beta\mu}{\sqrt{\mu^2+2\lambda}}\right), & E(\lambda)=(1-F(\lambda))e^{-2\sqrt{\mu^2+2\lambda}z_1}.\end{cases}
\]
We compute the wronskian at the point $x_0<z_1$ and obtain 
\[W(U_-,U_+)(x_0,\lambda)= -2 \sqrt{\mu^2+2\lambda} F(\lambda) \exp{\left(-2\mu x_0\right)}.\]
This leads to the result:
\begin{lemma}
The Green functions satisfy
\begin{equation}\label{g1skew}
\begin{split}
G(x,y;\lambda) & =2 h(y) \frac{U_-(x\wedge y,\lambda) U_+(x \vee y,\lambda)}{\beta \mu + \sqrt{2\lambda + \mu^2}}\\
& = e^{\mu (y-x)}  \frac{1}{\sqrt{2\lambda+\mu^2}} \ \frac{e^{-\sqrt{2\lambda+\mu^2}|y-x|} }{\sqrt{2\lambda +\mu^2}+\beta\mu}\left(\sum_{j=1}^2 c_j(\mu,y;\sqrt{2\lambda+\mu^2}) e^{-\sqrt{2\lambda +\mu^2}a_j(x,y)}\right)
\end{split}
\end{equation}
where
\[\begin{cases}
c_1(\mu,y; w) = {\beta \mu + w} \\
c_2(\mu,y; w)=  \beta w \left(2 \mathbbm{1}_{[z_1,+\infty)}(y)-1\right)-\beta \mu,
\end{cases} \begin{cases}
a_1(x,y)\equiv 0 \\
a_2(x,y)=|y-z_1|+|x-z_1|-|y-x|.
\end{cases}\]
\end{lemma}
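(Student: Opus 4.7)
My plan is to plug the just-computed $U_\pm$ and the wronskian formula into the general Green function expression of Lemma~\ref{Greenfunctions}, and then to verify the closed form (\ref{g1skew}) by a short case analysis on the position of $x$ and $y$ relative to $z_1$.

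First, I would evaluate the denominator $h(x_0)\,W(U_-,U_+)(x_0,\lambda)$ at the chosen test point $x_0<z_1$. Since on $(-\infty,z_1)$ we have $h(x_0)=k(x_0)e^{2\mu x_0}=\tfrac{1-\beta}{2}\,e^{2\mu x_0}$, and by the formula given above the lemma the wronskian equals $-2\sqrt{\mu^2+2\lambda}\,F(\lambda)\,e^{-2\mu x_0}$, the $e^{\pm 2\mu x_0}$ factors cancel, producing
\[
h(x_0)\,W(U_-,U_+)(x_0,\lambda)=-(1-\beta)\,\sqrt{\mu^2+2\lambda}\,F(\lambda)=-\bigl(\sqrt{\mu^2+2\lambda}+\beta\mu\bigr),
\]
after using the explicit value of $F(\lambda)$. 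Substituting in Lemma~\ref{Greenfunctions} gives directly the first equality of (\ref{g1skew}), namely $G(x,y;\lambda)=\dfrac{2h(y)\,U_-(x\wedge y,\lambda)U_+(x\vee y,\lambda)}{\beta\mu+\sqrt{2\lambda+\mu^2}}$.

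Second, I would verify the compact closed form with the $c_j$ and $a_j$ by going through the four cases (i) $x,y<z_1$, (ii) $x<z_1\le y$, (iii) $y<z_1\le x$, (iv) $z_1\le x,y$. In each case I would write $U_-(x\wedge y)U_+(x\vee y)$ by picking the appropriate branches of $U_\pm$ and multiplying the basis functions $u_\pm(x,\lambda)=e^{-\mu x\mp\sqrt{\mu^2+2\lambda}\,x}$. The crucial observation is that the product $h(y)\,u_\pm(\cdot)\,u_\mp(\cdot)$ always factors as
\[
\tfrac{1\pm\beta}{2}\,e^{\mu(y-x)}\,e^{-\sqrt{2\lambda+\mu^2}\,|y-x|}\quad\text{(up to the reflection terms)},
\]
because the drift-weight $e^{2\mu y}$ in $h(y)$ combines with the $e^{-\mu(x+y)}$ coming from the two exponentials to yield exactly $e^{\mu(y-x)}$; this explains the factor in front of (\ref{g1skew}). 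The direct contribution, which is the same in all four cases, collects into the $j=1$ term with $a_1=0$ and coefficient $c_1=\beta\mu+\sqrt{2\lambda+\mu^2}$. The ``reflected'' contribution, coming from either the $B(\lambda)\,u_+$ piece of $U_-$ on $[z_1,\infty)$ or the $E(\lambda)\,u_-$ piece of $U_+$ on $(-\infty,z_1]$, produces exactly the exponent $-\sqrt{2\lambda+\mu^2}\bigl(|y-z_1|+|x-z_1|-|y-x|\bigr)$ (which vanishes when $x,y$ lie on opposite sides of $z_1$, matching the vanishing $B$- resp.\ $E$-contribution in that regime), and the coefficient carries the sign $2\mathbbm{1}_{[z_1,\infty)}(y)-1$ depending on which side of $z_1$ the point $y$ lies. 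Collecting these facts produces $c_2$ and $a_2$ as stated.

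The main obstacle is the bookkeeping in the case analysis: one must track how the $A(\lambda),B(\lambda),E(\lambda),F(\lambda)$ coefficients re-combine after multiplication by $h(y)$ and division by $\beta\mu+\sqrt{2\lambda+\mu^2}$, and verify that the indicator function inside $c_2$ correctly encodes the four geometric configurations in a uniform formula. Once this is done, comparing the collected expressions across the four cases shows that all of them agree with the single formula (\ref{g1skew}), completing the proof.
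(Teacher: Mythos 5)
Your proposal is correct and follows essentially the same route as the paper, which states the lemma as a direct consequence of the previously computed coefficients $A,B,E,F$ and the wronskian, leaving the substitution into Lemma~\ref{Greenfunctions} and the case-by-case verification implicit. Your evaluation $h(x_0)W(U_-,U_+)(x_0,\lambda)=-(\sqrt{\mu^2+2\lambda}+\beta\mu)$ and the collection of the direct and reflected contributions into the $c_1,a_1$ and $c_2,a_2$ terms match what the paper's formulas require.
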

Notice that $a_2(x,y)\geq 0$ for all $x,y \in \mathbb{R}$.

\subsubsection*{B. The transition density as a contour integral}
The dependence on $\lambda$ of the Green functions given by (\ref{g1skew}) is actually a dependence on
\[\phi(\lambda) :=\sqrt{2\lambda + \mu^2}\in \{ \zeta \in \mathbb{C}; \Re(\zeta)>0\}.\]
This allows the change of variables $\xi:=\phi(\lambda)$ as in the subsection \ref{changegreen}:

\[\begin{split}&  p^{(\beta)}_\mu(t,x,y)=\frac{1}{2\pi i}\int_\Gamma e^{\lambda t}G(x,y;\lambda)d\lambda=\\
& =  e^{\mu (y-x)} \frac{1}{2\pi i}\int_\Gamma e^{\lambda t} \frac{e^{-\sqrt{2\lambda+\mu^2}|y-x|} }{\sqrt{2\lambda +\mu^2}+\beta\mu} \left(\sum_{j=1}^2 c_j(\mu,y;\sqrt{2\lambda+\mu^2}) e^{-\sqrt{2\lambda +\mu^2}a_j(x,y)}\right) \frac{d\lambda}{\sqrt{2\lambda+\mu^2}} \\
& = e^{\mu (y-x)-\frac{\mu^2}{2}t} \frac{1}{2\pi i}\int_{\phi(\Gamma)} e^{\frac{\xi^2}{2} t} \frac{e^{-\xi|y-x|} }{\xi+\beta\mu} \left(\sum_{j=1}^2 c_j(\mu,y;\xi) e^{- \xi a_j(x,y)}\right) d \xi.
\end{split}\]

\begin{figure}[H]
\begin{tikzpicture}

\path(0,-3.5) node{a)};
\draw[-stealth] (-1,0) -- (3,0) node[above left]{$\mathbb{R}$};
\draw[-stealth] (0,-3) -- (0,3) node[below right]{$i \mathbb{R}$};

\draw[red] (-1,0) -- (0,0);
\draw[red,dashed,thick] (-2.5,0) -- (-1,0);
\fill[red] (0,0) circle(0.05);
\draw (-1.05,-0.1) -- (-1,-0.1) -- (-1,0.1) -- (-1.05,0.1);

\draw[blue] (-2.5,-1) -- node[above]{$\Gamma$} (0,-1) arc(-90:90:1)  -- (-2.5,1);
\draw[blue,->] (-2,-1) -- (-1,-1);
\draw[blue,->] (-1.5,1) -- (-2,1);

\draw[green] (0.5,-3) ..controls(0.6,-2.5) and (0.7,-2.1).. (1.4,-1.4) arc(-45:45:2) ..controls(0.7,2.1) and (0.6,2.5).. node[above right]{$\phi(\Gamma)$} (0.5,3);
\draw[green,->] (2,0) arc(0:5:2);

\end{tikzpicture} \hspace{0.5cm}
\begin{tikzpicture}

\path(0,-3.5) node{b1)};
\draw[-stealth] (-1.5,0) -- (3,0) node[above left]{$\mathbb{R}$};
\draw[-stealth] (0,-3) -- (0,3) node[above left]{$\boxed{\beta\mu>0}$};

\draw[green] (0.5,-3) ..controls(0.6,-2.5) and (0.7,-2.1).. node[below right]{$\phi(\Gamma)$} (1.4,-1.4) arc(-45:45:2) ..controls(0.7,2.1) and (0.6,2.5).. (0.5,3);

\draw[magenta] (0,2.25) node[left]{$M'=(0,u)$} -- node[below]{$\rho_M$} (0.73,2.25) node[right]{$M=(\ell(u),u)$};
\fill[magenta] (0,2.25) circle (0.05) (0.73,2.25) circle (0.05);
\draw[magenta] (0,-2.25) node[left]{$-M'$} -- (0.73,-2.25);
\fill[magenta] (0,-2.25) circle (0.05);

\fill[red] (1.3,0) circle(0.06) node[below right]{$|\mu|$};
\draw[red] (0,0) -- (1.3,0);
\draw[red] (0,0) circle(0.06);

\fill (-0.55,-0.05) node[below left]{$-\beta \mu$} rectangle (-0.65,0.05) ;

\end{tikzpicture} \hspace{0.5cm}
\begin{tikzpicture}

\path(0,-3.5) node{b2)};
\draw[-stealth] (-1,0) -- (3,0) node[above left]{$\mathbb{R}$};
\draw[-stealth] (0,-3) -- (0,3) node[above left]{$\boxed{\beta\mu<0}$};

\draw[green] (0.5,-3) ..controls(0.6,-2.5) and (0.7,-2.1).. node[below right]{$\phi'$} (1.4,-1.4) arc(-45:45:2) ..controls(0.7,2.1) and (0.6,2.5).. (0.5,3);
\draw[blue,thick] (1.4,1.4) arc(135:225:2) node[right]{$\gamma$};
\draw[blue,thick] (1.4,-1.4) arc(-45:45:2);
\draw[green,dashed,thick] (1.4,1.4) arc(135:225:2);

\draw[magenta] (0,2.25) node[left]{$M'$} -- node[below]{$\rho_M$} (0.73,2.25) node[right]{$M$};
\fill[magenta] (0,2.25) circle (0.05) (0.73,2.25) circle (0.05);
\draw[magenta] (0,-2.25) node[left]{$-M'$} -- (0.73,-2.25);
\fill[magenta] (0,-2.25) circle (0.05);

\fill[red] (1.7,0) circle(0.06) node[below right]{$|\mu|$};
\draw[red] (0,0) -- (1.7,0);
\draw[red] (0,0) circle(0.06);

\fill[red] (1.2,-0.1) node[below]{$-\beta \mu$} rectangle (1.4,0.1) ;

\end{tikzpicture}
\vspace{0.5cm}
\captionsetup{singlelinecheck=off}
\caption[contourdrift]{\begin{itemize}
\item[a)] The picture shows the green image of the blue contour $\Gamma$ under $\phi$. The red line $(-\infty,0]$ contains the spectrum of the operator $(L,\mathcal{D}(L))$. The dashed line $(-\infty,\frac{\mu^2}2]$ is the complement of the domain of $\phi$.
\item[b1)] Case $\beta\mu>0$: the figure represents the magenta segment $\rho_M$ connecting the unique point $M$ in $\phi(\Gamma)$ with imaginary part $u$ to its projection on the imaginary line.
The red segment $(0,|\mu|]$ is the image under $\phi$ of $(-\frac{\mu^2}{2},0]$.
\item[b2)] Case $\beta \mu<0$: the curve $\phi(\Gamma)$ is decomposed as the union of a green curve $\phi'$ that avoids the unique pole $-\beta\mu\in (0,|\mu|]$ and the blue cycle $\gamma$ containing it. The segment $\rho_M$ connects in this case the unique point $M$ in $\phi'$ with imaginary part $u$ to its projection on the imaginary line.
\end{itemize}}\label{contourdrift}
\end{figure}

If $\beta \mu >0$ the integrand is holomorphic on the region between the contour $\phi(\Gamma)$ and the imaginary line. If  $\beta\mu < 0$ the integrand has exactly one pole of order one in $\xi=-\beta \mu$. We then decompose the curve $\phi(\Gamma)$ as the union of a curve $\phi'$ and $\gamma$, where $\gamma$ is a loop around the pole and $\phi'$ avoids the pole.

Respectively $\phi(\Gamma)$ and $\phi'$ can be deformed to the imaginary line (through $H:[0,1]\times \mathbb{R}\to \mathbb{R}^2$ given by $H(t,u)=M'(1-t)+t M$), if the analogous of Lemma \ref{smallintegral} is satisfied:
\begin{lemma}\label{smallintegralbis}
\[\lim_{|u| \to +\infty}  \int_{\rho_M} e^{\frac{\xi^2}2 t} \ \frac{e^{-\xi|y-x|} }{\xi+\beta\mu} \left(\sum_{j=1}^2 c_j(\mu,y;\xi) e^{- \xi a_j(x,y)}\right) d \xi =0, \]
where $\rho_M$ is the segment in the figures connecting the points $M'$ and $M$.
\end{lemma}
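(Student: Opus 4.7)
The plan is to mirror the proof of Lemma \ref{smallintegral}, paying additional attention to the new rational factor $1/(\xi+\beta\mu)$ and the $\xi$-dependence of the polynomial coefficients $c_j(\mu,y;\xi)$. First I would parametrize the segment as $\xi(v)=v+iu$ for $v\in[0,\ell(u)]$, where $\ell(u):=|M-M'|\to 0$ as $|u|\to\infty$, since $\phi(\Gamma)$ (respectively $\phi'$) is asymptotic to the imaginary line at infinity. Then I would estimate each factor of the integrand separately.

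The Gaussian factor gives $|e^{\xi^2 t/2}|=e^{(v^2-u^2)t/2}$; since $v\leq \ell(u)$ stays bounded, this is at most $C\,e^{-u^2 t/2}$. The exponential factor satisfies $|e^{-\xi(|y-x|+a_j(x,y))}|=e^{-v(|y-x|+a_j)}\leq 1$, because $v\geq 0$ and both $|y-x|$ and $a_j$ are non-negative. The main new technical point is the denominator: one has $|\xi+\beta\mu|^2=(v+\beta\mu)^2+u^2\geq u^2$, hence $|\xi+\beta\mu|^{-1}\leq |u|^{-1}$. Observe that this bound is exactly what guarantees, in the case $\beta\mu<0$ where the pole $-\beta\mu\in(0,|\mu|]$ lies on the positive real axis, that the segment $\rho_M$ is far from the pole once $|u|$ is large — this is consistent with having already isolated the pole by splitting $\phi(\Gamma)$ into $\phi'$ and $\gamma$. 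For the numerators, $|c_j(\mu,y;\xi)|\leq C(1+|\xi|)\leq C'(1+|u|)$, so the ratio $\sum_{j=1}^{2}|c_j(\mu,y;\xi)|/|\xi+\beta\mu|$ stays bounded by a constant uniformly in $|u|$ large.

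Putting these ingredients together,
\[ \left|\int_{\rho_M} e^{\frac{\xi^2}{2}t}\,\frac{e^{-\xi|y-x|}}{\xi+\beta\mu}\sum_{j=1}^{2}c_j(\mu,y;\xi)\,e^{-\xi a_j(x,y)}\,d\xi\right|\leq C\,\ell(u)\,e^{-u^2 t/2}, \]
which tends to $0$ as $|u|\to\infty$. The only step I expect to require care is the uniform bound on $|c_j|/|\xi+\beta\mu|$: once this is established, the Gaussian decay $e^{-u^2 t/2}$ (combined with $\ell(u)\to 0$) trivially dominates. The same argument handles both the $\beta\mu>0$ and $\beta\mu<0$ regimes, the only difference being whether we deform $\phi(\Gamma)$ or $\phi'$ to the imaginary axis.
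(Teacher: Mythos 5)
Your proposal is correct and follows essentially the same route as the paper: parametrize $\rho_M$ by $\xi=v+iu$, bound the exponential factors by $e^{-(u^2-v^2)t/2}$ and $1$, control the ratio $c_j/(\xi+\beta\mu)$ uniformly for large $|u|$, and conclude via Gaussian decay together with $\ell(u)\to 0$. The only cosmetic difference is that the paper first cancels $c_1(\mu,y;\xi)/(\xi+\beta\mu)=1$ exactly and shows the remaining $j=2$ ratio is $\leq 1$ for $u$ large, whereas you bound the full sum using $|c_j|\leq C(1+|\xi|)$ against $|\xi+\beta\mu|\geq |u|$; both are valid.
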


\begin{proof}
First of all notice that this integral is equal to
\[ I_u :=  \int_{\rho_M} e^{\frac{\xi^2}2 t} \ \frac{e^{-\xi|y-x|} }{\xi+\beta\mu} \left(\sum_{j=1}^2 c_j(\mu,y;\xi) e^{- \xi a_j(x,y)}\right) d \xi = \int_{\rho_M} e^{\frac{\xi^2}2 t} \ e^{-\xi|y-x|} \left( 1+ \frac{c_2(\mu,y;\xi)}{\xi+\beta\mu} e^{- \xi a_2(x,y)}\right) d \xi,\]
with $\ell(u) :=| M-M' |$ and $\lim_{u\to\infty} \ell(u)=0$. Let us consider the following parametrization of the segment $\rho(M)=\left\{ M'+ v (1,0); \ v\in (0,\ell(u))\right\}$, then:
\[ |I_u| \leq \int_0^{\ell(u)} e^{-\frac{(u^2 -v^2)}{2} t}  e^{-v|y-x|}  \left(1+\left| \frac{c_2(\mu,y;v+ i u)}{v+ i u+\beta\mu} \right|e^{- v \, a_2(x,y)}\right)   dv.\]
For $u$ large enough
\[\left| \frac{c_2(\mu,y; v+ i u)}{v+i u+\beta\mu} \right| = |\beta|\sqrt{\frac{\left(v \left(2\mathbbm{1}_{[z_1,+\infty)}(y)-1\right)-\mu\right)^2+u^2}{(v+\beta\mu)^2+ u^2}}\leq 1,\]
therefore 
\[ |I_u| \leq \int_0^{\ell(u)} e^{-\frac{(u^2 -v^2)}{2} t}  e^{-v|y-x|}  \left(1+e^{- v \, a_2(x,y)}\right)   dv,\]
that converges to zero if $\left|u\right|$ goes to infinity.
\end{proof}

We compute the integral on the loop through the method of residues:
\begin{equation}  \label{jthintegral}
\begin{split}  p^{(\beta)}_\mu(t,x,y)& =   e^{\mu (y-x)-\frac{\mu^2}{2}t} \frac{1}{2\pi i}\int_{\phi'} e^{\frac{\xi^2}{2} t} \frac{e^{-\xi|y-x|} }{\xi+\beta\mu} \left(\sum_{j=1}^2 c_j(\mu,y;\xi) e^{- \xi a_j(x,y)}\right) d \xi\\
&  \quad + e^{\mu (y-x)-\frac{\mu^2}{2}t} \frac{1}{2\pi i}\int_{\gamma} e^{\frac{\xi^2}{2} t} \frac{e^{-\xi|y-x|} }{\xi+\beta\mu} \left(\sum_{j=1}^2 c_j(\mu,y;\xi) e^{- \xi a_j(x,y)}\right) \mathbbm{1}_{\mathbb{R}^-}(\beta\mu)\\ 
& = e^{\mu (y-x)-\frac{\mu^2}{2}t}   \sum_{j=1}^2 \frac{1}{2\pi}\int_{\mathbb{R}} e^{-\frac{w^2}{2} t} \frac{ 1 }{i w+\beta\mu} c_j(\mu,y;i w) e^{- i w \left(a_j(x,y)+|x-y|\right)} d w \\
& \quad \color{blue}\underbrace{\color{black}-\beta\mu \left(1+\beta\, \left(2\mathbbm{1}_{[z_1,+\infty)}(y)-1\right)\right)\,  e^{\mu (y-x)-\frac{\mu^2}{2}t}e^{\frac{\beta^2 \mu^2}{2} t}  e^{\beta\mu (|y-z_1|+|x-z_1|)} \mathbbm{1}_{\mathbb{R}^-}(\beta\mu).}_{(*)}
\end{split}
\end{equation}
The last equality is obtained by shrinking $\int_{\phi '} \to \int_{i \mathbb{R}}$ and changing variable $\xi=i w$.

\subsubsection*{C. The transition density as a sum of Fourier transforms}
We interpret each of the two integrals in the last equality of equation (\ref{jthintegral}) as the Fourier transform computed at the value $(a_j(x,y)+|x-y|)$ of the function 
\[ w\mapsto  \frac{ e^{-\frac{w^2}{2} t} }{i w+\beta\mu} c_j(\mu,y;i w)=\begin{cases}
 e^{-\frac{w^2}{2}t} &  \text{if } j=1 \\
\beta \left( \left(2\mathbbm{1}_{[z_1,+\infty)}(y)-1\right) + \mu \left({1+\beta \, \left(2\mathbbm{1}_{[z_1,+\infty)}(y)-1\right)}\right)\frac{ i}{w - i \beta\mu}\right) e^{-\frac{w^2}{2}t} &  \text{if }j=2, \\
\end{cases} \]
where the Fourier transform of $f$ is $\mathcal{F}(f)(\omega)=\hat{f}(\omega)=\frac{1}{\sqrt{2\pi}} \int_\mathbb{R} e^{-i\omega y} f(y) \, dy$.
In both cases, these functions are integrable in $w$, 
so the transition density can now be written as
\begin{equation}\label{Fourier1skewterms}
\begin{split}  p^{(\beta)}_\mu(t,x,y) & =  \frac1{\sqrt{2 \pi} }e^{\mu (y-x)-\frac{\mu^2}{2}t}   \sum_{j=1}^2 \mathcal{F}\left( e^{-\frac{w^2}{2} t} \frac{ 1 }{i w+\beta\mu} c_j(\mu,y;i w) \right)\left(a_j(x,y)+|x-y|\right) \\
& \quad -\beta\mu \, \left( 1+\beta \left(2\mathbbm{1}_{[z_1,+\infty)}(y)-1\right)\right) \,  e^{\mu (y-x)-\frac{\mu^2}{2}t}e^{\frac{\beta^2 \mu^2}{2} t}  e^{\beta\mu (|y-z_1|+|x-z_1|)} \mathbbm{1}_{\mathbb{R}^-}(\beta \mu). 
\end{split}
\end{equation}
We can assume that $\beta \mu \neq 0$ because if $\beta=0$ we get the simple Brownian motion with drift without skew, and if $\mu=0$ we get the $\beta$-SBM whose transition density is already known (see for example \cite{BS}).

\begin{lemma} \label{Fourier1}
If $a \in \mathbb{R}^*$, then
\[
\mathcal{F}\left(\frac{1}{w- i a}\right)(\omega)= i \sqrt{2 \pi} \, \left(2\mathbbm{1}_{\mathbb{R}^+}(a)-1\right) \, e^{a \omega} \, \mathbbm{1}_{\mathbb{R}^-} \left(a \,\omega \right).\]
\end{lemma}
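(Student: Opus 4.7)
The plan is to compute the integral
\[
\mathcal{F}\!\left(\frac{1}{w-ia}\right)(\omega)=\frac{1}{\sqrt{2\pi}}\int_{\mathbb{R}}\frac{e^{-i\omega w}}{w-ia}\,dw
\]
by contour integration, since $w\mapsto 1/(w-ia)$ is not absolutely integrable but the integrand is meromorphic with a single simple pole at $w=ia$, located in the upper half-plane when $a>0$ and in the lower half-plane when $a<0$. First I would set up the Jordan-type contour: for a fixed parameter $R>|a|$, denote by $C_R^{\pm}$ the semicircle of radius $R$ in the upper ($+$) or lower ($-$) half-plane, oriented so that together with the segment $[-R,R]$ it bounds a region.

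Next, I would split into two sub-cases according to $\mathrm{sgn}(\omega)$, so that Jordan's lemma forces the arc contribution to vanish as $R\to\infty$. If $\omega<0$, writing $w=Re^{i\theta}$ with $\theta\in[0,\pi]$ gives $|e^{-i\omega w}|=e^{\omega R\sin\theta}$, which decays since $\omega<0$ and $\sin\theta\ge 0$; hence one closes in the upper half-plane and
\[
\int_{C_R^{+}}\frac{e^{-i\omega w}}{w-ia}\,dw\xrightarrow[R\to\infty]{}0.
\]
Symmetrically, for $\omega>0$ one closes in the lower half-plane (with clockwise orientation) and the corresponding arc contribution vanishes. The case $\omega=0$ is a null set and can be ignored in the statement.

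Applying the residue theorem then yields the four sub-cases. When $a>0$ and $\omega<0$, the pole $ia$ is enclosed (counterclockwise) and
\[
\int_{\mathbb{R}}\frac{e^{-i\omega w}}{w-ia}\,dw=2\pi i\,\mathrm{Res}_{w=ia}\frac{e^{-i\omega w}}{w-ia}=2\pi i\, e^{a\omega};
\]
when $a>0$ and $\omega>0$, no pole is enclosed and the integral is $0$. When $a<0$ and $\omega>0$, the pole is enclosed clockwise in the lower half-plane, giving $-2\pi i\, e^{a\omega}$; when $a<0$ and $\omega<0$, the integral is $0$. Dividing by $\sqrt{2\pi}$ and collecting the four cases, one checks that the result is precisely
\[
i\sqrt{2\pi}\,\bigl(2\mathbbm{1}_{\mathbb{R}^{+}}(a)-1\bigr)\,e^{a\omega}\,\mathbbm{1}_{\mathbb{R}^{-}}(a\omega),
\]
since $a>0$ makes $\mathbbm{1}_{\mathbb{R}^-}(a\omega)=\mathbbm{1}_{\mathbb{R}^-}(\omega)$ while $a<0$ makes $\mathbbm{1}_{\mathbb{R}^-}(a\omega)=\mathbbm{1}_{\mathbb{R}^+}(\omega)$, and the sign factor $2\mathbbm{1}_{\mathbb{R}^+}(a)-1$ matches the orientation of the enclosing contour.

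The only subtlety is that $1/(w-ia)$ is not in $L^1(\mathbb{R})$, so the Fourier transform has to be interpreted as an improper (oscillatory) integral; I expect this to be the main technical point, handled cleanly by Jordan's lemma applied to the slightly modified integrand $e^{-i\omega w}/(w-ia)$, whose exponential decay in the appropriate half-plane controls the semicircular arcs uniformly in $R$. Everything else is a routine residue calculation.
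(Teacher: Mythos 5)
Your proof is correct, but it takes a genuinely different route from the paper. You compute the forward transform $\frac{1}{\sqrt{2\pi}}\int_{\mathbb{R}}\frac{e^{-i\omega w}}{w-ia}\,dw$ head-on by closing a Jordan contour in the appropriate half-plane and reading off the residue at $w=ia$; the four sign cases you list all check out against the stated formula. The paper instead goes in the opposite direction: it observes that the \emph{candidate answer} $i\sqrt{2\pi}\,(2\mathbbm{1}_{\mathbb{R}^+}(a)-1)\,e^{a\omega}\,\mathbbm{1}_{\mathbb{R}^-}(a\omega)$ is integrable, computes its inverse Fourier transform by an elementary one-line integration of $\int e^{(a+iw)\omega}\,d\omega$ over a half-line, and recovers $1/(w-ia)$. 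The trade-off is clear: the paper's verification avoids residues and avoids ever integrating the non-$L^1$ function directly, but it implicitly relies on Fourier inversion holding for a function that is only in $L^2$; your version confronts the non-integrability explicitly, interpreting the transform as an improper oscillatory integral controlled by Jordan's lemma, which makes the analytic content of the statement more transparent at the cost of a longer case analysis. Both arguments are at a comparable level of rigor and either would serve.
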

\begin{proof} 
It is true since
\[\frac{1}{w- i a}=\mathcal{F}^{-1}\left( i \sqrt{2 \pi} \, \left(2\mathbbm{1}_{\mathbb{R}^+}(a)-1\right) \, e^{a \omega} \, \mathbbm{1}_{\mathbb{R}^-} \left(a \,\omega \right)\right)(w)=\frac{1}{\sqrt{2\pi}}\int_{\mathbb{R}} \left( i \sqrt{2 \pi} \, \left(2\mathbbm{1}_{\mathbb{R}^+}(a)-1\right) \, e^{a \omega} \, \mathbbm{1}_{\mathbb{R}^-} \left(a \,\omega \right)\right)\, e^{i \, \omega \, w} \, d\omega.\]
Notice that $\frac{1}{w-ia}$ is not integrable but $i \sqrt{2 \pi} \, \left(2\mathbbm{1}_{\mathbb{R}^+}(a)-1\right) \, e^{a \omega} \, \mathbbm{1}_{\mathbb{R}^-} \left(a \,\omega\right)$ is integrable.
\end{proof}

Using $\mathcal{F}\left(e^{-\frac{w^2}{2}t}\right)(\omega)=\frac1{\sqrt{t}}e^{-\frac{\omega^2}{2 t}}$ and Lemma \ref{Fourier1}, we get
\[ \begin{split}
\mathcal{F} & \left( e^{-\frac{w^2}{2} t} \frac{  c_2(\mu,y;i w) }{i w+\beta\mu}\right)(\omega)=\\
& \frac{\beta}{\sqrt{t}} \left(2\mathbbm{1}_{[z_1,+\infty)}(y)-1\right) e^{-\frac{\omega^2}{2 t}} - \frac1{\sqrt{t}} |\beta\mu|  \left( 1+\beta \left(2\mathbbm{1}_{[z_1,+\infty)}(y)-1\right)\right)\cdot \left(e^{w \, \beta\mu}\mathbbm{1}_{\mathbb{R}^-}\left(\beta \mu w\right) * e^{-\frac{w^2}{2 t}}\right)(\omega).
\end{split}\]
We compute the convolution as
\[\begin{split}
\left(e^{w \, \beta\mu}\mathbbm{1}_{\mathbb{R}^-}\left( \beta \mu w\right)*  e^{-\frac{w^2}{2 t}}\right)(\omega) 
 & =\sqrt{t} e^{\frac{(\beta\mu)^2}{2}t+\beta\mu \omega} \sqrt{2 \pi} \left(\underbrace{\mathbbm{1}_{\mathbb{R}^-}(\beta\mu)}_{\color{blue}(**)} + \left(2\mathbbm{1}_{\mathbb{R}^+}(\beta\mu)-1\right)\Phi^c \left(\frac{\omega}{\sqrt{t}}+\beta\mu\sqrt{t}\right)\right).
\end{split}\]

Notice that the term $(**)$ arising from the convolution is actually opposite to the term $(*)$ in (\ref{jthintegral}) arising from the integration on the cycle ${\color{blue}\gamma}$ containing the pole. Therefore the transition density becomes
\[\begin{split}  p^{(\beta)}_\mu(t,x,y) 
& = \frac1{\sqrt{2 \pi t} }e^{\mu (y-x)-\frac{\mu^2}{2}t}  \left(e^{-\frac{\left(a_1(x,y,z_1)+|x-y|\right)^2}{2 t}} + \beta \left(2\mathbbm{1}_{[z_1,+\infty)}(y)-1\right)e^{-\frac{\left(a_2(x,y,z_1)+|x-y|\right)^2}{2 t}}\right)+\\
& -\left(1+\beta\, \left(2\mathbbm{1}_{[z_1,+\infty)}(y)-1\right)\right) \beta\mu e^{\mu (y-x)-\frac{\mu^2}{2}t}  e^{\frac{(\beta\mu)^2}{2}t+\beta\mu \left(a_2(x,y,z_1)+|x-y|\right)} \Phi^c \left(\frac{a_2(x,y,z_1)+|x-y|}{\sqrt{t}}+\beta\mu\sqrt{t}\right).
\end{split}\]
Isolating the density of the Brownian motion with drift $\mu$ without skew, we recognise the expression we wanted, which completes the proof of Proposition \ref{tdf1skewmu}:
\[\begin{split}  p^{(\beta)}_\mu(t,x,y) & = p^{(0)}_\mu(t,x,y)  \left(e^{-\frac{\left(|x-y|\right)^2-|x-y|^2}{2 t}} + \beta \left(2\mathbbm{1}_{[z_1,+\infty)}(y)-1\right)e^{-\frac{\left(|x-z_1|+|y-z_1|\right)^2-|x-y|^2}{2 t}}\right)+\\
& -  p^{(0)}_\mu(t,x,y)  \sqrt{2\pi t} \beta\mu \left( 1+\left(2\mathbbm{1}_{[z_1,+\infty)}(y)-1\right)\beta\right) e^{\beta\mu a_2(x,y,z_1)}e^{\frac{(\beta\mu t +|x-y|)^2}{2 t}} \Phi ^c \left(\frac{|x-z_1|+|y-z_1|+\beta\mu t}{\sqrt{t}}\right).
\end{split}\]

\subsubsection{The case of two barriers and drift}

In this subsection we extend the computations done in the previous one to the case of two barriers to provide the transition density for the $(\beta_1,\beta_2)$-SBM with drift. 

\begin{theorem}\label{tdf2skewdrift}
Suppose $\beta_1\mu>0$ and $\beta_2\mu>0$. The transition density of the $(\beta_1,\beta_2)$-SBM with drift decomposes as
\[ p^{(\beta_1,\beta_2)}_\mu(t,x,y)=  p^{(0,0)}_\mu(t,x,y)  v^{(\beta_1,\beta_2)}_\mu(t,x,y)\]
where the function $v^{(\beta_1,\beta_2)}_\mu$ is given by a series of Fourier transforms.
If $\beta_1\neq \beta_2$,
\begin{equation} \label{d2skewmu}
v^{(\beta_1,\beta_2)}_\mu(t,x,y) = e^{\frac{|x-y|^2}{2t}} \sum_{k=0}^{\infty}  \frac{(-\beta_1 \beta_2)^k}{\beta_1-\beta_2}  \sum_{n=0}^k \sum_{m=0}^k \frac{  (-1)^m (2k-n)!}{(k-n)! (k-m)! n! m! }\frac{(\mu\sqrt{t})^{n+1-2m}}{(\beta_1-\beta_2)^{2k-n}}\sum_{j=1}^4 \sum_{h=0}^2 \frac{ c_{j,2-h}(y)}{(\mu\sqrt{t})^h} \mathscr{F}_{m,n}^h(\omega_{j,k});
\end{equation}
and if $\beta_1=\beta_2$,
\begin{equation} \label{d2skewmusamebeta}
v^{(\beta_1,\beta_1)}_\mu(t,x,y) = e^{\frac{|x-y|^2}{2t}} \sum_{k=0}^{\infty}  \frac{\beta_1^{2 k}}{(2 k+1)!} \sum_{j=1}^4 \sum_{h=0}^2 \sum_{m=0}^k {k \choose m} (-1)^{m+1} (\mu\sqrt{t})^{2(k-m)+2-h}  c_{j,2-h}(y) \mathscr{G}_{m,2k+1}^h(\omega_{j,k},\beta_1\mu\sqrt{t}),
\end{equation}
where $\omega_{j,k}:=\frac{a_j(x,y)+2 z k +|y-x|}{\sqrt t}$, $z:=z_2-z_1$ and $a_j(x,y)$ and $c_{j,h}(y)$ are defined in Lemma \ref{Green2skew}.
\[\mathscr{F}_{m,n}^h(\omega):=\mathscr{G}_{m,n}^h(\omega,\beta_2\mu \sqrt{t})-(-1)^n \mathscr{G}_{m,n}^h(\omega,\beta_1\mu \sqrt{t})\]
and for $A\in \left\{\beta_1 \mu \sqrt{t}, \beta_2 \mu \sqrt{t}\right\}$,

\[\begin{split}
\mathscr{G}_{m,n}^h(\omega,A)  &= (2m+h)! \sum_{\ell=0}^{m+\lfloor {\frac{h}{2}}\rfloor} \frac{(-1)^{\ell+h} }{2^\ell}\frac{1}{\ell! (2(m-\ell)+h)!} \ S_{m,n,l}^h(\omega,A)
\end{split}\]
where
\[\begin{split}
 S_{m,n,l}^h(\omega,A) 
& = \sum_{r=0}^n \sum_{s=0}^{2(m-\ell)+h} {n \choose r} {2(m-\ell)+h \choose s} (\omega+A)^{n-r} A^{2(m-\ell)+h-s} J_{r+s}(\omega,A),
\end{split}\]
and 
\[\begin{split} J_{q}(\omega,A) 
& :=
\begin{cases}
\sqrt{2 \pi} e^{\frac{A^2}{2}+A\omega} \Phi^c(\omega+A) 
& q=0,\\
-e^{-\frac{\omega^2}{2}} & q=1, \\
J_0(\omega, A) (q-1)!! - J_1(\omega,A) \sum_{a=0}^{\frac{q}{2}-1} (\omega+A)^{q-2a-1} \frac{(q-1)!!}{(q-2a-1)!!}  & q\geq 2 \text{ even, }\\
J_1(\omega,A) \sum_{a=0}^{\frac{q-1}{2}} (\omega+A)^{(q-1-2a)} 2^a \frac{(\frac{q-1}{2})!}{(\frac{q-1}{2}-a)!} & q \geq 3 \text{ odd }.
\end{cases} \end{split}\]
\end{theorem}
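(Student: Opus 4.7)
The plan is to follow the three-step Green function scheme already developed in Sections~2.2 and~2.3.1, now combining two barriers and a drift. First I would determine the eigenfunctions $U_\pm(\cdot,\lambda)$ solution of $(\lambda - L)U = 0$ vanishing at the appropriate infinity. On each of the three intervals delimited by $z_1 < z_2$ one writes $U_\pm$ as a linear combination of $u_\pm(x,\lambda) = \exp(-\mu x \mp \phi(\lambda) x)$ with $\phi(\lambda) := \sqrt{\mu^2 + 2\lambda}$; the twelve coefficients are then fixed by continuity of $U_\pm$ and of $h(\cdot)U_\pm'(\cdot)$ at $z_1$ and $z_2$. Computing the Wronskian at any $x_0 < z_1$ (where the functions have an explicit form) and applying Lemma~\ref{Greenfunctions} yields a Green function of the form
\begin{equation*}
G(x,y;\lambda) = \frac{e^{\mu(y-x)}}{\phi(\lambda)}\cdot\frac{\sum_{j=1}^{4}c_j(y;\phi(\lambda))\,e^{-\phi(\lambda)(|x-y|+a_j(x,y))}}{(\phi(\lambda)+\beta_1\mu)(\phi(\lambda)+\beta_2\mu)\bigl(1+\beta_1\beta_2\,e^{-2\phi(\lambda)z}\bigr)},
\end{equation*}
where the geometric factors $a_j(x,y)$ are those of Proposition~\ref{greenunique2skew} and each $c_j(y;w) = \sum_{h=0}^{2}c_{j,h}(y)\,w^h$ is a polynomial of degree at most two in $w$ encoding the transmission data.

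Second, I would perform the change of variable $\xi = \phi(\lambda)$ in the contour integral (\ref{representationdensity}), as in Section~2.3.1.B. Under the hypothesis $\beta_j\mu>0$ for $j=1,2$ the poles $\xi=-\beta_j\mu$ of the rational factor lie in the open left half-plane, so the integrand is holomorphic between $\phi(\Gamma)$ and $i\mathbb{R}$; an adaptation of Lemma~\ref{smallintegralbis}, where the additional geometric-series denominator is controlled via $|\beta_1\beta_2|<1$, allows one to shrink $\phi(\Gamma)$ to the imaginary axis without any residue contribution. The prefactor $e^{\mu(y-x)-\mu^2 t/2}$ combined with the Gaussian weight $1/\sqrt{2\pi t}$ reconstructs the density $p^{(0,0)}_\mu(t,x,y)$ and motivates the multiplicative decomposition stated in the theorem.

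Third, the two denominators are expanded separately. The factor $1+\beta_1\beta_2 e^{-2i w z}$ is replaced by the geometric series $\sum_{k\geq 0}(-\beta_1\beta_2)^k e^{-2iwzk}$, as in Proposition~\ref{transitiondensity2skew}. For the rational factor, partial fractions yield, when $\beta_1\neq\beta_2$,
\begin{equation*}
\frac{1}{(\xi+\beta_1\mu)(\xi+\beta_2\mu)} = \frac{1}{\beta_1-\beta_2}\left(\frac{1}{\xi+\beta_2\mu}-\frac{1}{\xi+\beta_1\mu}\right),
\end{equation*}
which explains both the factor $(\beta_1-\beta_2)^{-1}$ in (\ref{d2skewmu}) and the alternating sign $(-1)^n$ hidden in the definition of $\mathscr{F}^h_{m,n}$; when $\beta_1=\beta_2$ the two simple poles coalesce into a double pole, leading, after Taylor expansion around $\xi=-\beta_1\mu$, to the factorial $(2k+1)!$ and the binomial sum appearing in (\ref{d2skewmusamebeta}). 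Absolute convergence (from $|\beta_1\beta_2|<1$ and the Gaussian weight) legitimises the exchange of sum and integral.

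The last and most technical step is the explicit evaluation of each Fourier transform of the type
\begin{equation*}
\frac{1}{2\pi}\int_{\mathbb{R}} \frac{w^q}{(w-i\beta_j\mu)^{n+1}}\,e^{-w^2 t/2}\,e^{-iw\omega}\,dw
\end{equation*}
obtained after expanding the polynomial $c_j$. The inverse Fourier transform of $(w-i\beta_j\mu)^{-n-1}$ is a polynomial times a one-sided exponential (by differentiating Lemma~\ref{Fourier1}), so each integral reduces to a convolution of that exponential with a Hermite-type polynomial multiple of the Gaussian; carrying out this convolution produces a Gaussian complementary tail $\Phi^c$ at level zero, a pure Gaussian at level one, and the linear combinations of these at higher orders prescribed by the two-term recursion encoded in the definition of $J_q$, while the triple sum $S^h_{m,n,\ell}$ arises from the binomial expansion $(w - i\beta_j\mu + i\beta_j\mu)^{r}$ used to reconcile the polynomial numerator with the pole structure. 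The main obstacle is therefore the bookkeeping of the several indices $j,k,m,n,\ell,h,q$ and of the signs coming both from the partial fraction expansion and from the alternation $(-\beta_1\beta_2)^k$; the degenerate case $\beta_1=\beta_2$ should finally be cross-checked as a limit of (\ref{d2skewmu}) via l'H\^opital's rule to match (\ref{d2skewmusamebeta}).
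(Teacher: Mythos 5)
Your overall strategy --- Green functions, contour deformation to the imaginary axis with no residue contribution under $\beta_1\mu>0$, $\beta_2\mu>0$, geometric-series expansion, and evaluation of the resulting Fourier transforms via convolutions with one-sided exponentials --- is exactly the paper's. But there is a genuine error at the very first step that invalidates the subsequent bookkeeping: the denominator of the Green function does \emph{not} factor as
\[
(\phi(\lambda)+\beta_1\mu)(\phi(\lambda)+\beta_2\mu)\bigl(1+\beta_1\beta_2 e^{-2\phi(\lambda)z}\bigr).
\]
Solving the transmission conditions in the presence of a drift (Lemma \ref{Green2skew}) gives, with $w=\phi(\lambda)$, the \emph{sum}
\[
\beta_1\beta_2 e^{-2wz}(w^2-\mu^2)+(w+\beta_1\mu)(w+\beta_2\mu),
\]
and since $(w-\mu)(w+\mu)\neq(w+\beta_1\mu)(w+\beta_2\mu)$ in general, the exponential factor and the rational factor cannot be split off and expanded separately as you propose.

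The consequence is structural, not cosmetic. Expanding the true denominator as a geometric series produces as $k$-th term
\[
\frac{(-\beta_1\beta_2)^k (w^2+\mu^2)^k e^{-2iwzk}}{\bigl[(w-i\beta_1\mu)(w-i\beta_2\mu)\bigr]^{k+1}},
\]
i.e.\ poles whose order $k+1$ grows with $k$. This is exactly why the paper needs the partial-fraction decomposition of Lemma \ref{PFD} for $(k+1)$-fold poles and the Fourier transform of $(w-ia)^{-(k+1)}$ from Lemma \ref{Fourier2}; these are the source of the inner sums over $n,m$ ranging up to $k$, of the coefficients $\frac{(2k-n)!}{(k-n)!\,n!\,(\beta_1-\beta_2)^{2k-n}}$ in (\ref{d2skewmu}), and of the factorial $(2k+1)!$ in (\ref{d2skewmusamebeta}) (which comes from a pole of order $2k+2$, not from a mere double pole as you suggest). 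With your factorized denominator every term of the series would carry the same two fixed simple poles, the order-one partial fraction would suffice, and the resulting series would have the much simpler, $k$-independent pole structure of Proposition \ref{transitiondensity2skew} --- which cannot reproduce the stated formula. You need to start from the correct Green function of Lemma \ref{Green2skew} and accept that the geometric expansion couples the oscillatory and rational factors; the remainder of your outline (contour argument, convolution, Hermite polynomials, the $J_q$ recursion) then goes through as in the Appendix.
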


The proof of the theorem is based on the following four lemmas.

\begin{lemma}\label{Green2skew}
The Green functions, defined in Lemma \ref{Greenfunctions} satisfy
\[G(x,y;w) = \frac{1}{w}  e^{\mu(y-x)}  e^{- w |x-y|} \frac{ \sum_{j=1}^4 c_j(\mu,y;w)e^{-w a_j(x,y)}}{\beta_1 \beta_2 e^{-2 w z} (w^2-\mu^2)  +   (w+\beta_1 \mu)(w+\beta_2 \mu)}.\]
where $w:=\sqrt{2\lambda+\mu^2}$ and $z:=z_2-z_1$ is the distance between the barriers. The functions $a_j(x,y)$ are non negative, in particular they are

\[\begin{cases}
a_1(x,y)\equiv 0\\
a_2(x,y)=|y-z_1|+|x-z_1|-|y-x|\\
a_3(x,y)=|y-z_2|+|y-z_2|-|y-x|\\
a_4(x,y)=2\left(z_2-max(x,y,z_1)\right)^+ + 2 \left(min(x,y,z_2)-z_1\right)^+
\end{cases}\]
and 
$c_j(\mu,y;w)=w^2 c_{j,0}(y)+w \mu c_{j,1}(y)+ \mu^2 c_{j,2}(y)$ where
\[\begin{cases}
c_{1,0}(y)=1, 											\\
c_{2,0}(y)=\left(2\mathbbm{1}_{[z_1,+\infty)}(y)-1\right)\beta_1 							\\
c_{3,0}(y)=\left(2\mathbbm{1}_{[z_2,+\infty)}(y)-1\right)\beta_2 							\\
c_{4,0}(y)=\left(1-2\mathbbm{1}_{[z_1,z_2)}(y)\right)\beta_1\beta_2 
\end{cases},
\begin{cases}
c_{1,1}(y)=\beta_1+\beta_2		\\
c_{2,1}(y)=-\beta_1-c_{4,0}(y)		\\
c_{3,1}(y)=-\beta_2+c_{4,0}(y)		\\
c_{4,1}(y)=0
\end{cases},
\begin{cases}
c_{1,2}(y)=\beta_1\beta_2 \\
c_{2,2}(y)=\beta_1 c_{3,0}(y) \\
c_{3,2}(y)=-\beta_2 c_{2,0}(y) \\
c_{4,2}(y)=-c_{4,0}(y).
\end{cases}\]
\end{lemma}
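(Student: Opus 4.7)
The plan is to follow the same template used for Proposition~\ref{greenunique2skew} and for the one-barrier drifted case, keeping careful track of the new terms introduced by the drift. Concretely, I will compute the eigenfunctions $U_\pm$ explicitly, evaluate the Wronskian, and then substitute into the representation of $G$ given by Lemma~\ref{Greenfunctions}; the claimed closed form is obtained by matching coefficients region by region.

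First, on each of the three intervals $(-\infty,z_1)$, $(z_1,z_2)$, $(z_2,+\infty)$ where $h$ is constant, the equation $(\lambda-L)u=0$ reduces to $\tfrac12 u''+\mu u'=\lambda u$, so the characteristic roots are $-\mu\pm w$ with $w:=\sqrt{\mu^2+2\lambda}$ (real part positive by the choice of $\phi$). The two basic solutions are $u_\pm(x)=e^{(-\mu\mp w)x}$. Using the decay conditions at $\pm\infty$, I normalize $U_-=u_-$ on $(-\infty,z_1]$ and $U_+=u_+$ on $[z_2,+\infty)$, and write
\[
U_-=\begin{cases} u_- \\ Au_-+Bu_+ \\ Cu_-+Du_+\end{cases},\qquad U_+=\begin{cases} Gu_-+Hu_+ \\ Eu_-+Fu_+ \\ u_+\end{cases}
\]
on the three successive intervals. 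Because $U_\pm\in\mathcal{D}(L)$ and $e^{2\mu x}$ is continuous, the characterization of $\mathcal{D}(L)$ gives at each $z_j$ the continuity of $U$ and of $k\,U'$, i.e.
\[
U(z_j^-)=U(z_j^+),\qquad (1-\beta_j)U'(z_j^-)=(1+\beta_j)U'(z_j^+).
\]
These eight linear conditions determine the eight coefficients uniquely. The algebra mirrors the one-barrier drifted case done earlier in the paper, with the middle interval linking the two barrier conditions.

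Next, I exploit the fact (established inside the proof of Lemma~\ref{Greenfunctions}) that $x\mapsto h(x)W(U_-,U_+)(x,\lambda)$ is constant, and evaluate it at a point $x_0<z_1$. There $U_-=u_-$, $U_+=Gu_-+Hu_+$, and
\[
W(U_-,U_+)(x_0)=H\bigl(u_-u_+'-u_-'u_+\bigr)(x_0)=-2wH\,e^{-2\mu x_0},
\]
with $h(x_0)=\tfrac14(1-\beta_1)(1-\beta_2)e^{2\mu x_0}$. After solving for $H$ from the transmission system, the constant $h(x_0)W(U_-,U_+)(x_0)$ is proportional, up to the factors $-\tfrac{w}{2}$ and $(1-\beta_1)(1-\beta_2)$, to the polynomial $(w+\beta_1\mu)(w+\beta_2\mu)+\beta_1\beta_2 e^{-2wz}(w^2-\mu^2)$. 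This is precisely the denominator appearing in the claim and is the point at which the Green function's specific analytic structure is produced.

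Finally I insert $U_\pm$ and the Wronskian into the formula of Lemma~\ref{Greenfunctions}. Carrying out the computation in the canonical region $x<z_1<z_2<y$ (where $U_-(x)=u_-(x)$, $U_+(y)=u_+(y)$) yields immediately a $G$ of the stated form with $a_1(x,y)=0$ and $c_1(\mu,y;w)=w^2+(\beta_1+\beta_2)w\mu+\beta_1\beta_2\mu^2=(w+\beta_1\mu)(w+\beta_2\mu)$, matching $c_{1,0}=1,\ c_{1,1}=\beta_1+\beta_2,\ c_{1,2}=\beta_1\beta_2$. For any other position of $(x,y)$ relative to $(z_1,z_2)$ one substitutes the corresponding branches of $U_\pm$; the exponential factors combine with $e^{\mu(y-x)}e^{-w|x-y|}$ to produce precisely the four quantities $e^{-wa_j(x,y)}$, while the coefficients appearing in $U_\pm$ regroup into the quadratic-in-$w$ coefficients $c_j(\mu,y;w)$. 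The sign pattern $(2\mathbbm{1}_{[z_1,+\infty)}(y)-1)\beta_1$, etc., is recovered from the change of branch at each barrier, exactly as in the driftless case.

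The main obstacle is not the conceptual structure but the book-keeping in Step~3 and Step~5: solving the two $4\times 4$ systems and then reorganizing the region-by-region output into the unified form with four $a_j,c_j$ terms. Unlike the driftless computation, where the eigenfunctions are symmetric and the denominator reduces to $1+\beta_1\beta_2 e^{-2wz}$, here the drift breaks this symmetry, and the coefficients $c_j(\mu,y;w)$ become quadratic polynomials in $w$ whose lower-order pieces $c_{j,1},c_{j,2}$ must be tracked simultaneously. Once the canonical region is verified, the remaining five regions follow by the same pattern-matching argument used at the end of the proof of Proposition~\ref{greenunique2skew}, namely checking that the functions $a_j$ vanish correctly in each region and that $\sum_j c_j(\mu,y;w)$ reproduces $4k(y)\cdot(w+\beta_1\mu)(w+\beta_2\mu)$ (up to the drift-dependent factors) in the relevant branches.
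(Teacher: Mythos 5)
Your proposal is correct and follows exactly the route the paper intends: the paper's own proof is the one-line remark that the computation is analogous to Subsection 2.2.1, i.e.\ construct $U_\pm$ piecewise from $u_\pm(x)=e^{(-\mu\mp w)x}$, impose continuity and the transmission conditions $(1-\beta_j)U'(z_j^-)=(1+\beta_j)U'(z_j^+)$, evaluate $h(x_0)W(U_-,U_+)(x_0)$ at $x_0<z_1$ to produce the denominator, and verify the closed form in the canonical region $x<z_1<z_2<y$ with the remaining regions handled by the same pattern matching. Your sketch fills in these steps correctly (in particular the Wronskian $-2wHe^{-2\mu x_0}$ and the identification $c_1(\mu,y;w)=(w+\beta_1\mu)(w+\beta_2\mu)$ check out), so it is the same argument, only more explicit than the paper's.
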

\begin{proof} Analogously of the proof provided in subsection \ref{sectionGreen2skew}.
\end{proof}

\begin{lemma}[Partial fractional decomposition] \label{PFD}
Let $a,b\in \mathbb{R}^*$, $a\neq b$, then
\[\frac{1}{(w-i a)^{k+1} (w-ib)^{k+1}}=i\sum_{j=0}^{k} \frac{1}{(a-b)^{2 k+1-j}} \binom{2k-j}{k-j} \left(\frac{i^j}{(w-ib)^{j+1}}- (-1)^j\frac{i^j}{(w-ia)^{j+1}}\right).\]
\end{lemma}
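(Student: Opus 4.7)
The statement is a classical partial-fraction decomposition in disguise, and my plan is to derive it directly by computing the principal parts of the rational function at each of its two poles and then identifying the powers of $i$. Since the rational function $f(w) := (w-ia)^{-(k+1)}(w-ib)^{-(k+1)}$ has poles of order $k+1$ only at $w=ia$ and $w=ib$ and tends to $0$ as $|w|\to\infty$, it equals the sum of its principal parts there: $f = P_a + P_b$, where $P_a$ (resp.\ $P_b$) collects the negative-power terms of the Laurent expansion of $f$ at $ia$ (resp.\ $ib$). No polynomial part appears because the numerator has degree zero while the denominator has degree $2k+2$.

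To compute $P_a$, I would expand the regular factor at $w=ia$ using the generalized binomial theorem: writing $u=w-ia$ and $c=i(a-b)$,
\[
\frac{1}{(w-ib)^{k+1}} \;=\; \frac{1}{(u+c)^{k+1}} \;=\; \sum_{n=0}^{\infty} (-1)^n \binom{k+n}{n} \frac{u^n}{c^{k+1+n}},
\]
so that, after dividing by $(w-ia)^{k+1}=u^{k+1}$ and retaining only the singular terms ($n=0,\dots,k$), the substitution $j=k-n$ yields
\[
P_a(w) \;=\; \sum_{j=0}^{k} (-1)^{k-j} \binom{2k-j}{k-j} \frac{1}{\bigl(i(a-b)\bigr)^{2k+1-j}} \frac{1}{(w-ia)^{j+1}}.
\]
By the symmetry $a\leftrightarrow b$ I would then obtain $P_b(w)$ with the same formula but with $(a-b)$ replaced by $(b-a)$ and $(w-ia)$ by $(w-ib)$, picking up an extra factor $(-1)^{2k+1-j}=-(-1)^j$.

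To finish, I would combine $P_a + P_b$ and simplify the prefactor $(-1)^{k+1}\, i^{-(2k+1-j)}$. Using $i^{-1}=-i$ one obtains $i^{-(2k+1-j)} = (-1)^{k+1}\,i^{\,j+1}$, so the $(-1)^{k+1}$ factors cancel and an overall factor $i$ is produced, matching the right-hand side of the claim. The main obstacle here is not conceptual but a matter of careful bookkeeping of the signs $(-1)^{k-j}$, $(-1)^{k+1}$, and the power-of-$i$ rearrangement; this is where arithmetic slips are easiest. An alternative, inductive route would start from the base case $k=0$ (which reads $\tfrac{1}{(w-ia)(w-ib)} = \tfrac{i}{a-b}\bigl(\tfrac{1}{w-ib}-\tfrac{1}{w-ia}\bigr)$) and multiply both sides by $\tfrac{1}{(w-ia)(w-ib)}$, but I expect the direct residue/Taylor computation above to be cleaner and more transparent.
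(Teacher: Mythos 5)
Your proposal is correct and follows essentially the same route as the paper: the paper's proof is a two-line sketch invoking the standard partial-fraction decomposition at the two poles $ia$, $ib$ of order $k+1$ and stating that the coefficients (residues of $(x-x_i)^{j-1}f(x)$) were ``computed explicitly''. Your binomial expansion of the regular factor at each pole is just an explicit way of carrying out that same residue computation, and your sign and power-of-$i$ bookkeeping (in particular $i^{-(2k+1-j)}=(-1)^{k+1}i^{\,j+1}$) checks out against the stated formula.
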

\begin{proof}
The function $f(x)=\frac{1}{(w-i a)^{k+1}(w-i b)^{k+1}}$ is a rational function  with two poles $x_1=i a, x_2= i b$ of order $k+1$. We followed a standard method for computing the decomposition: there exist coefficients $\alpha_{i,j}$ such that the function can be written as $f(x)=\sum_{i=1}^2 \sum_{j=1}^{k+1} \frac{\alpha_{i,j}}{(x-x_i)^j}$. Since the $\alpha_{i,j}$ are the residues in $x_i$ of the function $g_{i,j}(x)=(x-x_i)^{j-1}f(x)$, we computed them explicitly.
\end{proof}

\begin{lemma} \label{Fourier2}
If $a \in \mathbb{R}^*$, and $k\in \mathbb{N}$ then
\[
\mathcal{F}\left(\frac{1}{(w- i a)^{k+1}}\right)(\omega)= i^{k+1} \sqrt{2 \pi} \, \left(2\mathbbm{1}_{\mathbb{R}^+}(a)-1\right) \, \frac{(-\omega)^k}{k!} \, e^{a \omega} \, \mathbbm{1}_{\mathbb{R}^-} \left(a \,\omega \right).\]
\end{lemma}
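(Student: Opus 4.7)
The natural approach is induction on $k$, the base case $k=0$ being exactly Lemma \ref{Fourier1}. The inductive engine is the identity
\[
\frac{\partial}{\partial a}\frac{1}{(w-ia)^{k+1}} \;=\; \frac{i(k+1)}{(w-ia)^{k+2}},
\]
which lets us pass from level $k$ to level $k+1$ by differentiating in the parameter $a$. The plan is to differentiate both sides of the identity claimed at level $k$ with respect to $a$, exchange $\partial_a$ with the Fourier integral on the left, and read off the expression for level $k+1$.

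More precisely, fix $a\in\mathbb{R}^*$ and work in a compact neighbourhood of $a$ that does not meet $0$, so that the sign of $a$ (and hence of $a\omega$) does not change. On that neighbourhood $(\partial_a)[1/(w-ia)^{k+1}]$ is controlled by $(k+1)/(|a|/2)^{k+2-1}(1+w^{2})^{-1}$ (or any similar integrable bound for $k\ge 1$), so dominated convergence justifies differentiating under the integral; for the base case $k=0$ the same computation may be read as the definition of Lemma \ref{Fourier1}'s derivative in the distributional sense. On the right-hand side the factors $(2\mathbbm{1}_{\mathbb{R}^+}(a)-1)$ and $\mathbbm{1}_{\mathbb{R}^-}(a\omega)$ are locally constant in $a$, so the derivative acts only on $e^{a\omega}$ and produces a factor $\omega$, i.e. it turns $(-\omega)^k/k!$ into $-(-\omega)^{k+1}/k!$.

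Collecting terms, the inductive step reads
\[
i(k+1)\,\mathcal{F}\!\left(\tfrac{1}{(w-ia)^{k+2}}\right)(\omega) \;=\; -\,i^{k+1}\sqrt{2\pi}\,(2\mathbbm{1}_{\mathbb{R}^+}(a)-1)\,\tfrac{(-\omega)^{k+1}}{k!}\,e^{a\omega}\mathbbm{1}_{\mathbb{R}^-}(a\omega),
\]
and dividing by $i(k+1)$ together with the identity $-i^{k+1}/i=-i^{k}=i^{k+2}$ produces exactly the claimed formula at level $k+1$. The only part that demands attention, and which I expect to be the main (though mild) obstacle, is the domination argument justifying the interchange of $\partial_a$ and $\int dw$; once that is in place the proof reduces to bookkeeping of powers of $i$ and of signs, using $(-\omega)^{k+1}=-\omega\cdot(-\omega)^{k}$.

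As a sanity check (and an alternative derivation if one prefers avoiding the interchange of derivative and integral) one may instead verify the formula directly via residues: for $k\ge 1$ the function $w\mapsto(w-ia)^{-k-1}$ is in $L^{1}(\mathbb{R})$, closing the contour in the half-plane in which $e^{-i\omega w}$ decays, and computing $\operatorname{Res}_{w=ia}\frac{e^{-i\omega w}}{(w-ia)^{k+1}}=\frac{(-i\omega)^{k}}{k!}e^{a\omega}$ yields the same expression. This gives an independent proof but induction via $\partial_a$ remains the slickest route.
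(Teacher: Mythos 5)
Your proposal is correct, but your primary route differs from the paper's. The paper's proof is a one\--liner: for $k\ge 1$ the function $w\mapsto (w-ia)^{-(k+1)}$ lies in $L^1(\mathbb{R})\cap L^2(\mathbb{R})$, so its Fourier transform is an absolutely convergent integral evaluated directly by residues (exactly your ``sanity check''), with $k=0$ referred back to Lemma \ref{Fourier1}. You instead induct on $k$ by differentiating in the parameter $a$, using $\partial_a (w-ia)^{-(k+1)} = i(k+1)(w-ia)^{-(k+2)}$; your bookkeeping of the powers of $i$ and of the sign in $(-\omega)^{k+1}$ is right, and the formula at level $k+1$ does come out correctly. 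What the induction buys is that you never have to compute a residue of order $k+1$ (the iterated derivative of $e^{-i\omega w}$ is replaced by repeated multiplication by $\omega$); what it costs is the justification of interchanging $\partial_a$ with the Fourier integral. For $k\ge 1$ your domination by $(k+1)\,|w-ia|^{-(k+2)}\le (k+1)(w^2+a^2)^{-(k+2)/2}$, locally uniform in $a$ on a compact set avoiding $0$, is fine. The one step that is genuinely delicate is $k=0\to k=1$: there $1/(w-ia)\notin L^1$, the ``Fourier integral'' of Lemma \ref{Fourier1} is not absolutely convergent, and your remark that the computation ``may be read as the definition of the derivative in the distributional sense'' is not yet an argument. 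This is patchable --- e.g.\ establish $k=1$ directly by residues (legitimate since $(w-ia)^{-2}\in L^1$) and start the induction there, or note that $a\mapsto 1/(w-ia)$ is differentiable as an $L^2$-valued map and that the Plancherel transform commutes with $L^2$-differentiation --- but as written that single step is the only real gap, and the paper's residue computation avoids the issue entirely.
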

\begin{proof} 
If $k=0$ it coincides with Lemma \ref{Fourier1}, otherwise the function $\frac{1}{(w- i a)^{k+1}}\in L^1(\mathbb{R})\cap L^2(\mathbb{R})$ and one computes its Fourier transform in $\omega$, $\frac1{\sqrt{2\pi}} \int_\mathbb{R} \frac{1}{(w- i a)^{k+1}} e^{-i \omega w}dw$, through the method of residues.
\end{proof}

\begin{lemma} \label{integraldef}
Let $q\in \mathbb{N}$. The primitive function $I_q(\cdot)$ of $v\mapsto v^{q}e^{-\frac{v^2}2}$  (resp. $\tilde{ I}_q(\alpha)=\int_{(\alpha,+\infty)} v^q \, e^{-\frac{v^2}{2}} dv$ ) satisfies
\[\begin{cases}
I_0(\alpha) := 
\sqrt{2 \pi} \Phi(\alpha)=\sqrt{2 \pi} \Phi^c(-\alpha) \quad (\text{resp. } \sqrt{2 \pi} \Phi^c(\alpha)) 
& q=0,\\
I_1(\alpha) := 
- e^{-\frac{\alpha^2}{2}} \qquad (\text{resp. } e^{-\frac{\alpha^2}{2}})& q=1, \\
I_q(\alpha)= I_0(\alpha) (q-1)!! + I_1(\alpha) \sum_{k=0}^{\frac{q}{2}-1} \alpha^{q-2k-1} \frac{(q-1)!!}{(q-2k-1)!!}  & q\geq 2 \text{ even, }\\
I_q(\alpha)=I_1(\alpha) \sum_{k=0}^{\frac{q-1}{2}} \alpha^{(q-1-2k)} 2^k \frac{(\frac{q-1}{2})!}{(\frac{q-1}{2}-k)!} & q \geq 3 \text{ odd }.
\end{cases}\]
\end{lemma}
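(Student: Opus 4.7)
The strategy is integration by parts, yielding a two-step recursion, followed by induction on $q$ separated by parity. The base cases $q=0,1$ are immediate from the definitions: $\int e^{-v^2/2}\,dv = \sqrt{2\pi}\,\Phi(\alpha)$ and $\int v\,e^{-v^2/2}\,dv = -e^{-v^2/2}$, with the obvious sign adjustments for $\tilde I_0, \tilde I_1$ when the integration range is $(\alpha,+\infty)$.

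For $q\geq 2$, I would use the identity $\frac{d}{dv}\bigl(-e^{-v^2/2}\bigr) = v\,e^{-v^2/2}$ and integrate by parts, splitting $v^q = v^{q-1}\cdot v$ with $u=v^{q-1}$ and $dw = v\,e^{-v^2/2}\,dv$. This produces the recursion
\[
I_q(\alpha) \;=\; \alpha^{q-1}\,I_1(\alpha) \,+\, (q-1)\,I_{q-2}(\alpha),
\]
which holds verbatim for $\tilde I_q$ as well, with $\tilde I_1 = e^{-\alpha^2/2}$; the sign discrepancy between $I_1$ and $\tilde I_1$ reflects only the vanishing of the boundary term at $+\infty$.

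I would then unfold this recursion by induction on $q$ along parity. If $q = 2p$ is even, $p$ iterations land on $I_0$, contributing the coefficient $(q-1)(q-3)\cdots 1 = (q-1)!!$ in front of $I_0(\alpha)$, together with a sum whose coefficient of $\alpha^{q-1-2k}I_1(\alpha)$ is $(q-1)(q-3)\cdots(q-2k+1) = (q-1)!!/(q-2k-1)!!$ for $k=0,\dots,p-1$, matching the even-case formula. If $q = 2p+1$ is odd, the iteration terminates at $I_1$, and the coefficient of $\alpha^{q-1-2k}I_1(\alpha)$ becomes the product of $k$ consecutive even integers $(2p)(2p-2)\cdots(2(p-k+1)) = 2^k\,p!/(p-k)!$ for $k=0,\dots,p$, matching the odd-case formula.

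The argument is essentially elementary; there is no genuine obstacle beyond mild bookkeeping. The only points requiring attention are rewriting the product of consecutive odd (resp.\ even) integers in the closed forms involving double factorials and factorials used in the statement, keeping the indexing ranges $k\in\{0,\dots,q/2-1\}$ and $k\in\{0,\dots,(q-1)/2\}$ straight, and consistently using $I_1$ (or $\tilde I_1$) with the correct sign throughout the induction.
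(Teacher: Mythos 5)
Your proof is correct and follows essentially the same route as the paper: integration by parts to obtain the two-term recursion, then unfolding by parity to get the double-factorial and $2^k\,p!/(p-k)!$ coefficients. Note that your recursion $I_q(\alpha)=\alpha^{q-1}I_1(\alpha)+(q-1)I_{q-2}(\alpha)$ carries the correct coefficient $(q-1)$, whereas the paper's displayed recursion reads $(q-2)$ --- an apparent typo, since only $(q-1)$ is consistent with the closed forms in the statement (e.g.\ $I_2=\alpha I_1+I_0$).
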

\begin{proof} Straightforward for $q=0,q=1$, and for $q\geq 2$ one can use the integration by parts for the integral $\int v^q e^{-\frac{v^2}{2}} dv= - \int v^{q-1} \frac{d}{dv} \left(e^{-\frac{v^2}{2}}\right) dv$ and obtain the recursive formula
\[I_{q}(\alpha)=\alpha^{q-1}I_1(\alpha)+(q-2)I_{q-2}(\alpha)\]
that yields the conclusion.
\end{proof}

We just present a sketch of the proof of Theorem \ref{tdf2skewdrift}. The detailed computations will be proposed in Appendix. The ideas are similar to the proof of Proposition \ref{transitiondensity2skew} and Proposition \ref{tdf1skewmu} but even more technical and laborious.

\begin{proof}[Proof of Theorem \ref{tdf2skewdrift}]
In subsection \ref{method} we saw that the transition density of the $(\beta_1,\beta_2)$-SBM with drift $\mu$ has an integral representation as in equation \ref{representationdensity}. Lemma \ref{Green2skew} gives us the expression of the Green functions. One can make the change of variable $\phi(\lambda)=\sqrt{2 \lambda +\mu^2}$ proceeding as in Figure~ \ref{contourdrift}.a. We can show that zero is an erasable singularity for the integrand, that is also holomorphic on the entire imaginary line. Since we assumed $\beta_1\mu>0, \, \beta_2 \mu>0$, the integrand has no poles in $(0,\mu^2]$. Therefore, being in the case of Figure~\ref{contourdrift}.b1 and since an analogous of Lemma \ref{smallintegral} holds, one can deform the contour to the imaginary line. One obtains the transition density as
\[p^{(\beta_1,\beta_2)}_\mu(t,x,y)=- e^{-\frac{\mu^2}{2}t+\mu (y-x)} \frac{1}{2\pi} \int_{\mathbb{R}} e^{-\frac{w^2}{2}t}  e^{- i w |x-y|} \frac{ \sum_{j=1}^4 c_j(y,\mu;iw)e^{-i w a_j(x,y)}}{\beta_1 \beta_2 e^{-2 i w z} (w^2+\mu^2)  +   (w-i \beta_1 \mu)(w-i\beta_2 \mu)} dw.\]
If $w\neq 0$ then $\left| \beta_1\beta_2 \frac{w^2+\mu^2}{(w-i\beta_1\mu)(w-i\beta_2\mu)}\right|<1$ hence the transition density can be written as
\[\begin{split}
p^{(\beta_1,\beta_2)}_\mu= \frac{e^{-\frac{\mu^2}{2}t+\mu (y-x)}}{2\pi }\int_{\mathbb{R}}  \sum_{k=0}^{\infty}  \frac{ -(-\beta_1 \beta_2)^k(w^2+\mu^2)^k}{\left[(w-i\beta_1 \mu)(w-i\beta_2 \mu)\right]^{k+1}}e^{-\frac{w^2}{2}t} \sum_{j=1}^4 c_j(y,\mu; i w) e^{- iw (a_j(x,y,\beta)+2 z k + |x-y|)} dw,
\end{split}\]
where we can exchange integral and limit because the series of the absolute values is integrable. We now interpret the expression for $v^{(\beta_1,\beta_2)}_\mu$ as a series of Fourier transforms
\begin{equation}\label{firststepfourier}
\begin{cases}
v^{(\beta_1,\beta_2)}_\mu(t,x,y) = e^{\frac{|x-y|^2}{2t}} \sum_{k=0}^{\infty}  (-\beta_1 \beta_2)^k  \sum_{j=1}^4 F_{j,k}(\omega_{j,k}),\\
 F_{j,k}:=\mathcal{F}\left(w \mapsto  e^{-\frac{w^2}{2}} c_j(y,\mu\sqrt{t};i w) (w^2+\mu^2t)^k\cdot\frac{-1}{(w-i\beta_1 \mu\sqrt{t})^{k+1}(w-i\beta_2 \mu\sqrt{t})^{k+1}} \right)
\end{cases}
\end{equation}
The Fourier transform $F_{j,k}(\omega_{j,k})$ can be rewritten as the convolution of Fourier transforms
\begin{equation}\label{secondstepfourier}
F_{j,k}(\omega_{j,k})=\frac{1}{\sqrt{2\pi}}\mathcal{F}\left(  e^{-\frac{w^2}{2}} c_j(y,\mu\sqrt{t};i w) (w^2+\mu^2t)^k\right) * \mathcal{F}\left(\frac{-1}{(w-i\beta_1 \mu\sqrt{t})^{k+1}(w-i\beta_2 \mu\sqrt{t})^{k+1}} \right) (\omega_{j,k}).
\end{equation}
The Fourier transform $\mathcal{F}\left(\frac{-1}{(w-i\beta_1 \mu\sqrt{t})^{k+1}(w-i\beta_2 \mu\sqrt{t})^{k+1}} \right)$ is computed using Lemma \ref{Fourier2} if $\beta_1=\beta_2$, otherwise using jointly Lemma \ref{PFD} and Lemma \ref{Fourier2}.
One concludes the proof using the properties of the iterated derivatives of Gaussian densities, and introducing $J_q(\omega,A)=
e^{\frac{A^2}2+A\omega}I_q(-(\omega+A))$ (see Lemma \ref{integraldef})
. For more details see the Appendix.
\end{proof}

We assumed both $\beta_1\mu$ and $\beta_2\mu$ to be positive because, if $\beta_1\mu<0$ or $\beta_2\mu<0$, the exact computation for the density can be even more subtle. This is due to the possible presence of an additional term in the contour integral corresponding to zeros of the denominator lying the positive real semi-axis (as in the case of the $\beta_1$-SBM with drift $\mu$, see Figure~\ref{contourdrift}.b2). These cases will be treated in a incoming paper on the exact simulation of a Brownian diffusion with drift with several discontinuities \cite{DMR2}.

Another possible approach in order to solve the (\ref{traden}) could be to apply the technique used in \cite{Vee} in case of Brownian motion with drift between two barriers, but our approach seems to be more fruitful.

\subsection{Limit cases}
For particular choices of the parameters formulas (\ref{d2skewmu}) and (\ref{d2skewmusamebeta}) reduce to the more simple cases studied before.

For $\beta_2=0$, the correspondent barrier $z_2$ is completely permeable, so it is like if it disappears, hence one would expect to obtain the density of the $\beta_1$-SBM with drift.\\
Without directly substituting $\beta_2=0$ in the final expression of the transition density, one can notice in equation (\ref{firststepfourier}) that only $F_{j,0}(\omega_{j,0})$ for $j\in\left\{1,2\right\}$ do not vanish. Moreover equation (\ref{firststepfourier}) turns out to be equation (\ref{Fourier1skewterms}) with $\beta=\beta_1$ such that $\beta_1\mu>0$.

Even for $z_2\to+\infty$ one would expect to obtain the density of the $\beta_1$-SBM with drift. In fact if the second barrier is very far from the starting point of the process, at every finite time the trajectory has no way to see the latter barrier and is effected only by the reflection coefficient $\beta_1$.\\
Less heuristical and more direct would the following approach. First notice that, since $z_2\to +\infty$, $a_3(x,y), a_4(x,y)$ and $z$ go to $+\infty$ which implies $\omega_{j,k}\to \infty$ as soon as $k\neq 0$ or $j\neq \{1,2\}$. Then consider the expression for $F_{j,k}$: in equation (\ref{firststepfourier}), it is a Fourier transform of a $L^2$-function, hence it is in $L^2$. It can be shown that it admits a limit at infinity, hence this limit has to be zero. Therefore the not vanishing terms in equation \ref{firststepfourier} are again given by $j=1,2$ and $k=0$.

\section{Exact simulation}\label{exactsimulation}
To simulate exactly a process means to simulate it from its law sampling exactly from its finite dimensional distributions without approximations (beyond the machine's). 
Exact sampling of a random variable can be achieved using the rejection sampling method, introduced in \cite{Neu}.

The rejection method allows to sample from the density $h$ of a random variable $X\sim h(x)dx$ knowing how to sample another one $Y\sim g(x)dx$ if $h \leq M g$ for $M$ a finite strictly positive constant. The sample $y=Y$ is accepted as a sample of $X$ if and only if $u<\frac{h(y)}{ M g(y)}$ , where $u$ is the sample of a uniform random variable $U \sim \mathcal{U}_{[0,1]}$. Notice that $\mathbbm{1}_{\left\{U< \frac{h(Y)}{M g(Y)}\right\}}$ is a Bernoulli random variable with random parameter $\frac{h(Y)}{M g(Y)}$. Moreover the densities $h(x)$ and $g(x)$ do not need to be normalized.

In our framework, the one-dimensional projection at time $t$ of a $(\beta_1,\beta_2)$-SBM has a density whose ratio with respect to the well known transition probability density of the Brownian motion is a series, as we saw in equation (\ref{tdf2skew}). What happens if the density cannot be evaluated exactly, since it is an infinite sum?
The technique we are going to propose allows to evaluate only a finite number of terms of the series, and at the same time, to maintain the exactness of the sampling.

\subsection{Generalized rejection sampling method}
Let us introduce our method by explaining a toy example for simulating exactly a Bernoulli random variable $X\sim \mathcal{B}_p$ with unknown parameter $p\in [0,1]$. If the parameter is known, then clearly $X \overset{(d)}{=}\mathbbm{1}_{\left \{U\leq p\right\}}$, hence an exact simulation consists in sampling the uniform random variable $U\sim \mathcal{U}_{[0,1]}$ and checking if the sample is smaller (or bigger) than $p$ to decide if $X=1$ (or $X=0$).

\begin{lemma} \label{idearejection}
Suppose $p$ is an unknown parameter which is approximated by a sequence $(p_n)_n$ and the rate of convergence is at least $(\delta_n)_n$ where $(\delta_n)_n$ is a decreasing vanishing sequence (i.e. $|p-p_n|<\delta_n$). Then it is possible to simulate exactly a Bernoulli of parameter $p$ since $X:=\mathbbm{1}_{\left\{\exists n; \  |U-p_n|>\delta_n, \ U < p_n \right\}}\sim \mathcal{B}_p$ .
\end{lemma}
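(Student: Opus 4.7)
The plan is to show the equality of events $\{X=1\} = \{U < p\}$ up to a $P$-null set, from which the conclusion $P(X=1) = P(U<p) = p$ follows immediately since $U \sim \mathcal{U}_{[0,1]}$.

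First I would simplify the event defining $\{X=1\}$. For fixed $n$, the two conditions $|U - p_n| > \delta_n$ and $U < p_n$ hold simultaneously if and only if $U < p_n - \delta_n$ (the case $U > p_n + \delta_n$ is excluded by $U < p_n$). Consequently
\[
\{X=1\} = \bigcup_{n\in\mathbb{N}} \{U < p_n - \delta_n\}.
\]
Also, the hypothesis $|p - p_n| < \delta_n$ is equivalent to the two-sided bound $p_n - \delta_n < p < p_n + \delta_n$.

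Next I would establish both inclusions. For $\{X=1\} \subseteq \{U < p\}$: if $U < p_n - \delta_n$ for some $n$, then by the left half of the two-sided bound, $U < p_n - \delta_n < p$, hence $U < p$. For $\{U < p\} \subseteq \{X=1\}$: since $p_n \to p$ and $\delta_n \to 0$, the sequence $p_n - \delta_n$ converges to $p$; thus if $U(\omega) < p$ strictly, there exists $n$ (depending on $\omega$) with $p_n - \delta_n > U(\omega)$, which gives $\omega \in \{X=1\}$.

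Combining the two inclusions yields $\{X=1\} = \{U < p\}$, so $P(X=1) = p$ and $X \sim \mathcal{B}_p$. There is no real obstacle here; the whole content of the lemma is the observation that the enveloping tolerance $\delta_n$ is small enough to separate $U$ from $p_n$ on the correct side once $n$ is large, while the error bound $|p - p_n| < \delta_n$ prevents the algorithm from ever accepting when $U > p$. This justifies the rejection procedure in a setting where $p$ is only accessible through approximations $p_n$ with controlled error $\delta_n$.
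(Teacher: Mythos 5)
Your proof is correct and follows essentially the same route as the paper: both rest on the identity $\{X=1\}=\bigcup_{n}\{U<p_n-\delta_n\}=\{U<p\}$, which you justify by the two inclusions using $p_n-\delta_n<p<p_n+\delta_n$ and $p_n-\delta_n\to p$. The only element of the paper's proof you omit is the preliminary observation that almost surely there \emph{exists} an $n$ with $|U-p_n|>\delta_n$ (using $|U-p|>0$ a.s.\ and the monotonicity of $\delta_n$), which is what guarantees that the decision procedure terminates in finite time — i.e.\ the ``it is possible to simulate exactly'' part of the claim — but this does not affect the distributional identity $X\sim\mathcal{B}_p$ that you prove.
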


\begin{proof}

First of all we need to show that, a.s., there exists an $n$ such that $\left| U-p_n\right| > \delta_n$. Notice that a.s. $\left| U-p\right|>0$. Since $\delta_n\to 0$, a.s. there exist $n_0$ such that $\left| U-p\right| > 2 \delta_{n_0}$. Therefore there exist an $n$ (for monotonicity it works for $n\geq n_0$) such that a.s. $\left|  U-p_n \right|>\delta_n$.\\
Now, since 
\[\left\{U<p\right\}=\left\{\exists n\in \mathbb{N}; \  |U-p_n|>\delta_n, \ U < p_n \right\}=\bigcup_{n\in \mathbb{N}} \left\{U<p_n-\delta_n \right\} ,\]
then $p=\mathbb{P}(U<p)=\mathbb{P}(X=1)$.

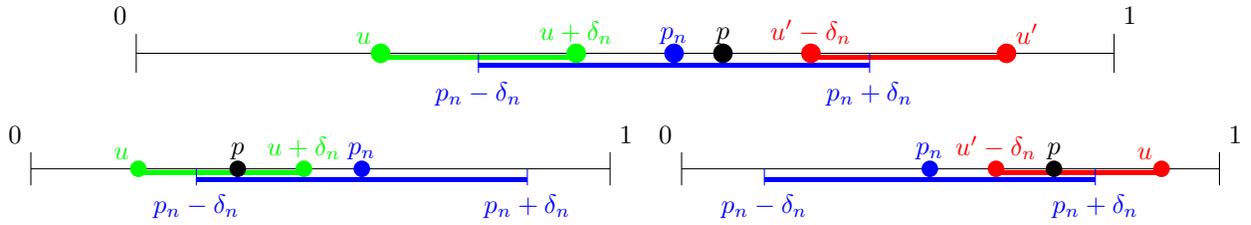
\begin{figure}[H]
\begin{center}
\begin{tikzpicture}[scale=1.3]

\draw[draw] (-5,0) -- (5,0);
\draw[draw] (-5,-0.2) -- (-5,0.2) node[above left]{$0$};
\draw[draw] (5,-0.2) -- (5,0.2) node[above right]{$1$};
\fill[blue] (-1.5,-0.1)rectangle(2.5,-0.15) ;
\fill[green] (-2.5,-0.02)rectangle(-0.5,-0.07) ;
\fill (1,0) circle (0.1) node[above]{$p$};
\fill[green] (-2.5,0) circle (0.1) node[above left]{$u$};
\fill[green] (-0.5,0) circle (0.1) node[above]{$u+\delta_n$};
\fill[red] (3.9,0) circle (0.1) node[above right]{$u'$};
\fill[red] (1.9,0) circle (0.1) node[above]{$u'-\delta_n$};
\fill[red] (1.9,-0.02)rectangle(3.9,-0.07) ;
\fill[blue] (0.5,0)  node[above]{$p_n$} circle (0.1);
\draw[blue] (-1.5,-0.2) node[below]{$p_n-\delta_n$} -- (-1.5,0) ;
\draw[blue] (2.5,-0.2) node[below]{$p_n+\delta_n$} -- (2.5,0) ;

\end{tikzpicture}\\
\begin{tikzpicture}[scale=1.1]

\draw[draw] (-3.5,0) -- (3.5,0);
\draw[draw] (-3.5,-0.2) -- (-3.5,0.2) node[above left]{$0$};
\draw[draw] (3.5,-0.2) -- (3.5,0.2) node[above right]{$1$};
\fill[blue] (-1.5,-0.1)rectangle(2.5,-0.15) ;
\fill[green] (-2.2,-0.02)rectangle(-0.2,-0.07) ;
\fill (-1,0) circle (0.1) node[above]{$p$};
\fill[green] (-2.2,0) circle (0.1) node[above left]{$u$};
\fill[green] (-0.2,0) circle (0.1) node[above]{$u+\delta_n$};
\fill[blue] (0.5,0)  node[above]{$p_n$} circle (0.1);
\draw[blue] (-1.5,-0.2) node[below]{$p_n-\delta_n$} -- (-1.5,0) ;
\draw[blue] (2.5,-0.2) node[below]{$p_n+\delta_n$} -- (2.5,0) ;
\end{tikzpicture}
\begin{tikzpicture}[scale=1.1]
\draw[draw] (-2.5,0) -- (4,0);
\draw[draw] (-2.5,-0.2) -- (-2.5,0.2) node[above left]{$0$};
\draw[draw] (4,-0.2) -- (4,0.2) node[above right]{$1$};
\fill[blue] (-1.5,-0.1)rectangle(2.5,-0.15) ;
\fill[red] (1.3,-0.02)rectangle(3.3,-0.07) ;
\fill (2,0) circle (0.1) node[above]{$p$};
\fill[red] (3.3,0) circle (0.1) node[above left]{$u$};
\fill[red] (1.3,0) circle (0.1) node[above]{$u'-\delta_n$};
\fill[blue] (0.5,0)  node[above]{$p_n$} circle (0.1);
\draw[blue] (-1.5,-0.2) node[below]{$p_n-\delta_n$} -- (-1.5,0) ;
\draw[blue] (2.5,-0.2) node[below]{$p_n+\delta_n$} -- (2.5,0) ;

\end{tikzpicture}

\end{center}
\caption{The pictures illustrate the way to sample a Bernoulli random variable $X$ of unknown parameter $p$: if $u<p$ then $X:=1$, otherwise $X:=0$. In the first image $u<p_n-\delta_n$ hence $u<p$ (resp. $u'>p_n+\delta_n$ hence $u'>p$). The second images show that, if $u<p_n-\delta_n<u+\delta_n<p_n$ (resp. $p_n<u'-\delta_n<p_n+\delta_n<u$) then $u<p$ (resp. $u'>p$) anyway.}\label{ideaalgorithm}
\end{figure}

The scheme of the algorithm then will be:
\begin{enumerate}
\item sample from $\mathcal{U}$, we obtain $u$
\item find $n$ such that $\left| u-p_n\right| > \delta_n$,
\item if $u < p_n$, then $u<p$ hence $X:=1$ otherwise $X:=0$ (see Figure~\ref{ideaalgorithm}).
\end{enumerate}
\end{proof}

This idea allows us to extend the rejection sampling method for sampling $X\sim h(x)dx$ knowing an approximation of the density $h(x)$.

\begin{theorem}[Generalized rejection sampling method] \label{generalizedrejection}
Assume one knows how to sample the random variable Y with (unnormalized) density $g(x)$. Then one can sample the random variable $X$ with (unnormalized) density $h(x)$ under the following assumptions:
\begin{enumerate}[(i)]
\item the ratio between the functions $g$ and $h$ is bounded:
\[ \exists M>0 \text{ such that } 0<f(y):=\frac1M\frac{h(y)}{ g(y)}\leq 1 \text{ for all } y\in \mathbb{R};\]
\item there exists a sequence of explicitly computable functions $(f_n)_n$ converging to $f$ at a decreasing explicitly computable rate $(\delta_n)_n$.
\end{enumerate}
Then $X\sim \left(Y| \, \exists n; \ U<f_n(Y)-\delta_n \right)$ i.e. an exact simulation is possible.
\end{theorem}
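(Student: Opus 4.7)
The plan is to reduce the statement to the classical rejection sampling method by showing that the event $\{\exists n:U<f_n(Y)-\delta_n\}$ coincides almost surely with the usual acceptance event $\{U<f(Y)\}$. This is essentially the mechanism already illustrated in Lemma~\ref{idearejection} for the Bernoulli case, now lifted to the level of a proposal distribution $Y\sim g$.

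The first step would be to fix a realization of $Y$ and apply the argument of Lemma~\ref{idearejection} conditionally, with the role of $p$ played by $f(Y)\in(0,1]$ and the role of $p_n$ played by $f_n(Y)$. Assumption (ii) guarantees $|f_n(Y)-f(Y)|<\delta_n$ with $\delta_n\searrow 0$, while $U\sim\mathcal{U}_{[0,1]}$ being independent of $Y$ and absolutely continuous ensures $U\neq f(Y)$ almost surely. Hence there exists a.s. some (random, finite) $n_0$ with $|U-f(Y)|>2\delta_{n_0}$, and by the triangle inequality and monotonicity of $(\delta_n)$, $|U-f_n(Y)|>\delta_n$ for every $n\geq n_0$. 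Using the identity $\{U<f_n(Y)-\delta_n\}=\{U<f_n(Y)\}\cap\{|U-f_n(Y)|>\delta_n\}$, the same two-sided squeeze as in the Bernoulli case yields
\[\bigcup_{n\in\mathbb{N}}\{U<f_n(Y)-\delta_n\}=\{U<f(Y)\}\quad\text{a.s.}\]

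The second step is the standard rejection computation: by independence of $U$ and $Y$, for every Borel set $A$,
\[\mathbb{P}(Y\in A,\,U<f(Y))=\int_{A}f(y)\,g(y)\,dy=\frac{1}{M}\int_{A}h(y)\,dy,\]
so $\mathbb{P}(U<f(Y))=\frac{1}{M}\int h(y)\,dy>0$ (which also certifies that the conditioning event has positive probability), and the conditional law of $Y$ given acceptance is the (normalized) density $h$. Combining this with the first step gives the claim.

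The only obstacle I anticipate is making sure the heuristic dichotomy of Lemma~\ref{idearejection} is applied to the right random object: one must argue that, almost surely in $Y$, the question whether $U<f(Y)$ is settled after finitely many approximations $(f_n(Y))$, which crucially uses that $U$ is almost surely distinct from $f(Y)$. Everything else is routine joint measurability of $(U,Y)\mapsto f_n(Y)-\delta_n$ and Fubini for the conditioning step.
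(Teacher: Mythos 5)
Your proposal is correct and follows essentially the same route as the paper: reduce to the standard rejection identity $X\sim\left(Y\,|\,U<f(Y)\right)$ and then apply Lemma~\ref{idearejection} conditionally on $Y=y$ with $p=f(y)$ and $p_n=f_n(y)$ to show the acceptance event is a.s.\ decided in finitely many steps. You merely spell out the standard rejection computation and the a.s.\ set identity $\bigcup_n\{U<f_n(Y)-\delta_n\}=\{U<f(Y)\}$ more explicitly than the paper does.
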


\begin{proof}

It is well known from the standard rejection sampling that $X\sim \left(Y|{U}<f(Y)\right)$ (see for example \cite{Ross}). Lemma \ref{idearejection} ensures that we can simulate exactly without knowing $f(Y)$ with complete accuracy. The acceptability of the draw $y=Y$ as a sample from $X$ is a Bernoulli with parameter $f(y)$ and we can compute explicitly a sequence converging to this quantity $f_n(y)(=:p_n)$ and its rate of convergence $(\delta_n(y))_n$. 
Thus the rejection sampling scheme based on Lemma \ref{idearejection} is the following 
\begin{enumerate}
\item sample $u$ from a uniform random variable $U\sim \mathcal{U}_{[0,1]}$,
\item sample from the density $g$: we get $y=Y$,
\item take $y$ as a sample of $X$ if $u<f(y)$, otherwise reject and start again. More precisely:
\begin{itemize}
\item[3a.] find $n$ such that $|u-f_n(y)|>\delta_n$,
\item[3b.] check whether $f_n(y)>u$,
\item[3c.] if yes accept $X=y$, if not reject it.
\end{itemize}
\end{enumerate}
\end{proof}

\subsection{Sampling from the density of the $(\beta_1,\beta_2)$-SBM}

We now apply Theorem \ref{generalizedrejection} for sampling from the density at time $t$ of the $(\beta_1,\beta_2)$-SBM starting at $x$. We already noticed in Proposition \ref{transitiondensity2skew} that its density is absolutely continuous with respect to the one of the Brownian motion $p^{(0,0)}(t,x,y)$ with ratio 
\begin{equation} \label{vbeta}
v^{(\beta_1,\beta_2)}(t,x,y)=\sum_{k=0}^{\infty}  (-\beta_1 \beta_2)^k  \sum_{j=1}^4 c_j(y,\beta_1,\beta_2)e^{ -\frac{\left(a_j(x,y)+2 z k\right)^2}{2 t}} e^{-|x-y|\frac{a_j(x,y)+2 z k}{t}}.
\end{equation}
Since we can only evaluate the sum of the series $v^{(\beta_1,\beta_2)}(t,x,y)$ with some error, we check if the hypothesis $(i)$ and $(ii)$ of Theorem \ref{generalizedrejection} are satisfied.

\begin{lemma} \label{boundv}
There exists an upper bound for $v^{(\beta_1,\beta_2)}(t,x,y)$ uniform in $x$ and $y$:  
\[ \sup_{x,y\in\mathbb{R}} \left| v^{(\beta_1,\beta_2)}(t,x,y)\right| \leq \overline{v}:=\frac{(1+|\beta_1|)(1+|\beta_2|)}{1-|\beta_1\beta_2|}.\]
\end{lemma}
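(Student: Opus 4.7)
The plan is to bound $|v^{(\beta_1,\beta_2)}(t,x,y)|$ term by term in the series (\ref{vbeta}) using the triangle inequality and the non-negativity of the quantities appearing in the exponentials.

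First, I would observe that the two exponential factors in each summand are bounded by $1$. Indeed, by the remark following Proposition~\ref{greenunique2skew} we have $a_j(x,y)\geq 0$, and obviously $2zk\geq 0$ and $|x-y|\geq 0$, so both
\[
e^{-\frac{(a_j(x,y)+2zk)^2}{2t}} \leq 1 \qquad\text{and}\qquad e^{-|x-y|\frac{a_j(x,y)+2zk}{t}} \leq 1,
\]
uniformly in $x,y,k$ and $j$.

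Next, reading off the coefficients $c_j(y,\beta_1,\beta_2)$ from Proposition~\ref{greenunique2skew}, I would note that
\[
|c_1|\leq 1,\quad |c_2|\leq |\beta_1|,\quad |c_3|\leq |\beta_2|,\quad |c_4|\leq |\beta_1\beta_2|,
\]
so $\sum_{j=1}^4 |c_j(y,\beta_1,\beta_2)| \leq 1+|\beta_1|+|\beta_2|+|\beta_1\beta_2| = (1+|\beta_1|)(1+|\beta_2|)$, uniformly in $y$.

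Combining these two facts and applying the triangle inequality to the series yields
\[
|v^{(\beta_1,\beta_2)}(t,x,y)| \;\leq\; \sum_{k=0}^\infty |\beta_1\beta_2|^k \,(1+|\beta_1|)(1+|\beta_2|).
\]
Since $|\beta_1\beta_2|<1$ (because $\beta_1,\beta_2\in(-1,1)$), the geometric series converges to $(1-|\beta_1\beta_2|)^{-1}$, giving the claimed bound. There is essentially no obstacle here — the estimate is robust because the termwise bound is already independent of $x,y,t$; the only thing to check is that the absolute-value series is summable, which is ensured by $|\beta_1\beta_2|<1$, and this also justifies rearranging the double sum (swapping the order of summation in $k$ and $j$).
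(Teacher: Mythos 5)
Your proof is correct and follows essentially the same route as the paper: bound each exponential factor by $1$ using $a_j(x,y)\geq 0$ and $z\geq 0$, note that $\sum_{j=1}^4 |c_j(y,\beta_1,\beta_2)| = (1+|\beta_1|)(1+|\beta_2|)$ (the paper records this as an equality, which your inequality correctly implies), and sum the geometric series in $|\beta_1\beta_2|^k$. No issues.
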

\begin{proof}
\[|v^{(\beta_1,\beta_2)}(t,x,y)|\leq \left(\sum_{k=0}^{\infty} |\beta_1\beta_2|^k \right)\left(\sum_{j=1}^4 |c_j(y,\beta_1,\beta_2)|\right)=\frac{\sum_{j=1}^4 |c_j(y,\beta_1,\beta_2)|}{1-|\beta_1\beta_2|}=\frac{(1+|\beta_1|)(1+|\beta_2|)}{1-|\beta_1\beta_2|}=:\overline{v}.\]
\end{proof}

We denote the truncated series at the first $N$ terms by
\[v^{(\beta_1,\beta_2)}_N(t,x,y):=\sum_{k=0}^{N}  (-\beta_1 \beta_2)^k  \sum_{j=1}^4 c_j(y,\beta_1,\beta_2) e^{ -\frac{\left(a_j(x,y)+2 z k\right)^2}{2 t}}  e^{-|x-y|\frac{a_j(x,y)+2 z k}{t}},\]
and the rest by $R^N v^{(\beta_1,\beta_2)}(t,x,y)=v^{(\beta_1,\beta_2)}(t,x,y)-v_N^{(\beta_1,\beta_2)}(t,x,y)$.

\begin{lemma} \label{boundrest}
The rest of the truncated series is bounded uniformly in $x,y$: 
\[|R^N v^{(\beta_1,\beta_2)}(t,x,y)|\leq \overline{v} \ |\beta_1 \beta_2|^{N+1}.\]
\end{lemma}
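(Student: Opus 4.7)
The plan is to imitate almost verbatim the proof of Lemma \ref{boundv}, isolating the tail of the geometric series in $|\beta_1\beta_2|$ instead of the full series. The key observation is that the exponential factors appearing in each summand of $v^{(\beta_1,\beta_2)}(t,x,y)$ are all bounded by $1$: indeed, from the explicit expressions for $a_j(x,y)$ given in Proposition \ref{greenunique2skew}, one has $a_j(x,y)\geq 0$, and since $z=z_2-z_1>0$, $k\geq 0$, $|x-y|\geq 0$ and $t>0$, both
\[
e^{-\frac{(a_j(x,y)+2zk)^2}{2t}} \leq 1 \quad\text{and}\quad e^{-|x-y|\frac{a_j(x,y)+2zk}{t}} \leq 1
\]
uniformly in $x,y$.

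Starting from the definition
\[
R^N v^{(\beta_1,\beta_2)}(t,x,y)=\sum_{k=N+1}^{\infty}(-\beta_1\beta_2)^k\sum_{j=1}^4 c_j(y,\beta_1,\beta_2)\,e^{-\frac{(a_j(x,y)+2zk)^2}{2t}}e^{-|x-y|\frac{a_j(x,y)+2zk}{t}},
\]
I would apply the triangle inequality, the pointwise bounds above, and then the identity $\sum_{j=1}^4 |c_j(y,\beta_1,\beta_2)|=(1+|\beta_1|)(1+|\beta_2|)$ (which follows immediately from the explicit form of the $c_j$'s and is already implicit in the proof of Lemma \ref{boundv}). This yields
\[
|R^N v^{(\beta_1,\beta_2)}(t,x,y)|\leq (1+|\beta_1|)(1+|\beta_2|)\sum_{k=N+1}^{\infty}|\beta_1\beta_2|^k.
\]

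Finally, summing the geometric tail gives $\sum_{k=N+1}^{\infty}|\beta_1\beta_2|^k=\frac{|\beta_1\beta_2|^{N+1}}{1-|\beta_1\beta_2|}$, and one recognizes the factor $\overline{v}=\frac{(1+|\beta_1|)(1+|\beta_2|)}{1-|\beta_1\beta_2|}$ from Lemma \ref{boundv}, producing the claimed bound $\overline{v}\,|\beta_1\beta_2|^{N+1}$. There is no serious obstacle here; the proof is essentially a tail version of the previous lemma, and the only thing to double check is the non-negativity of the $a_j$'s, which was already observed just after the statement of Proposition \ref{greenunique2skew}.
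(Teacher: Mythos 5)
Your proof is correct and follows essentially the same route as the paper: bound each exponential factor by $1$ (using $a_j(x,y)\geq 0$), factor out $\sum_{j=1}^4 |c_j(y,\beta_1,\beta_2)|=(1+|\beta_1|)(1+|\beta_2|)$, and sum the geometric tail to recognize $\overline{v}\,|\beta_1\beta_2|^{N+1}$. The paper's proof is just a one-line version of the same argument.
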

\begin{proof}
\[|R^N v^{(\beta_1,\beta_2)} (t,x,y) | \leq \left(\sum_{k=N+1}^{\infty}  |\beta_1 \beta_2|^k\right) \left( \sum_{j=1}^4 |c_j(y,\beta_1,\beta_2) |\right) =\overline{v} \ |\beta_1\beta_2|^{N+1}.\]
\end{proof}

We can apply Theorem \ref{generalizedrejection} with
\[f_n (y):= \frac1{\overline{v}} \ v^{(\beta_1,\beta_2)}_n(t,x,y) \quad \text{ and } \quad \delta_n:=|\beta_1\beta_2|^{n+1}.\]
We are then able to sample from the density $y\mapsto p^{(\beta_1,\beta_2)}(t,x,y)$ in equation (\ref{tdf2skew}) through the generalized rejection sampling algorithm and therefore we are able to simulate exactly the Markov process $(\beta_1,\beta_2)$-SBM (for example see Figure~\ref{pathexactsimulation}).

To increase the efficiency of the rejection algorithm, we apply the following principle: assume we have just computed $\left|f_N^{\beta,t,x}(y)-u \right|$ and noticed that it is smaller than $\delta_N$, we then take the first index $\hat{N}$ greater than the quantity ${{\left(\log \delta_N\right)^{-1}}\log \left|f_N^{\beta,t,x}(y)-u\right|}$. Moreover it is better to fix an integer $N_{max}$ in order to stop the algorithm in case it does not find the desired conditions $3a$. in Theorem \ref{generalizedrejection}. This index $N_{max}$ should be such that the rest of the series is sufficiently small for considering the truncated sum as a good approximation (due to Lemma \ref{boundrest} an upper bound for the error is $\overline{v} \ |\beta_1\beta_2|^{N_{max}}$). In any case the simulation turns out to be always exact (that is the acceptance or rejection is obtained for an index smaller than $N_{max}$) if $|\beta_1\beta_2|$ is not too close to 1. In that case we may increase the index $N_{max}$ in such a way that $\delta_{N_{max}}$ is small.

\begin{figure}[H]\centering\begin{minipage}{0.45\textwidth}
\centering
\includegraphics[width=9.cm]{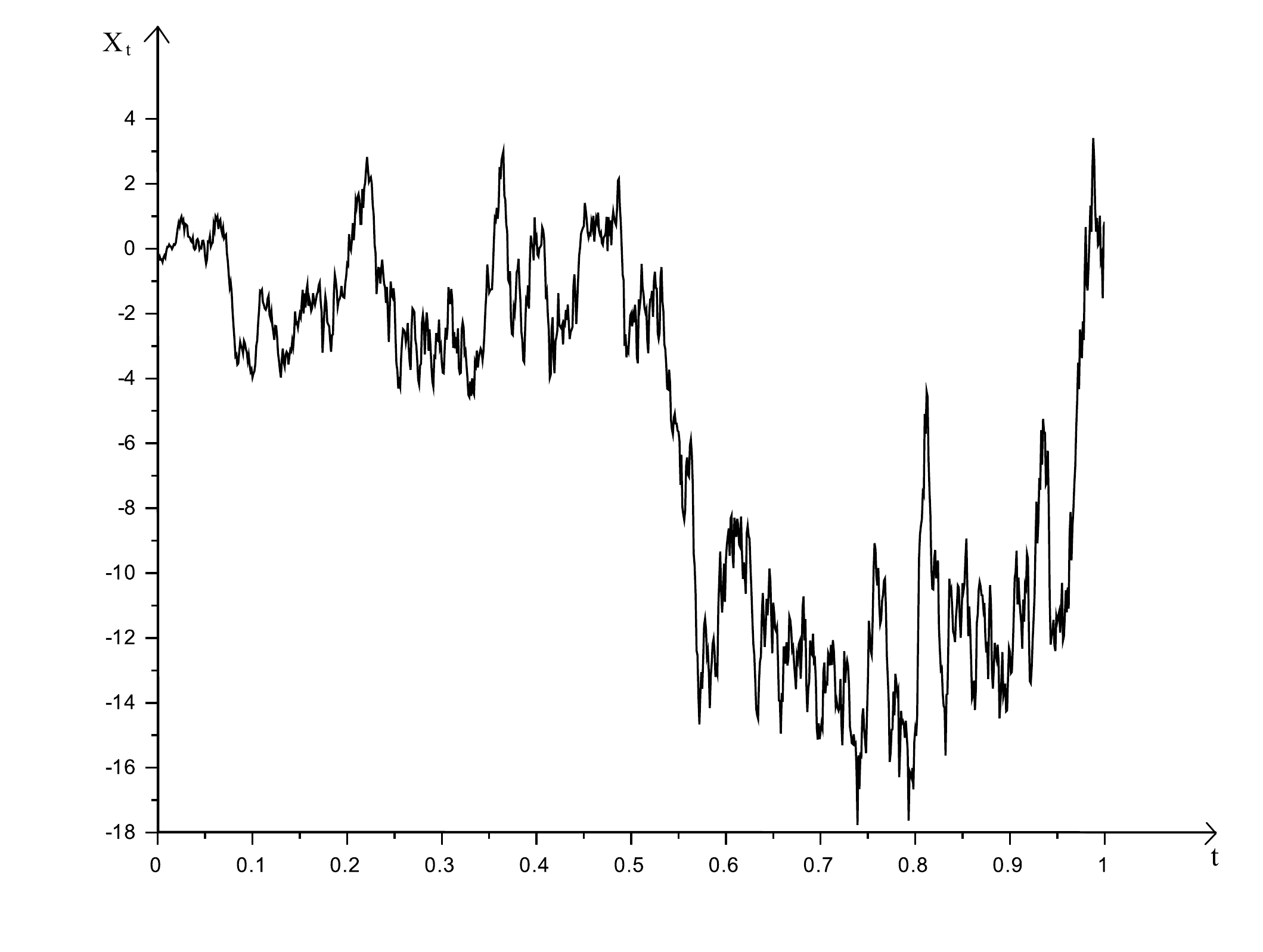}
\caption{Exact simulation of a path of the $(0.7,-0.2)$-SBM starting at time $0$ in $x=-0.3$. The barriers are $z_1=0$ and $z_2=1$.}\label{pathexactsimulation}
\end{minipage}
\hfill
\begin{minipage}{0.45\textwidth}
\includegraphics[width=9cm]{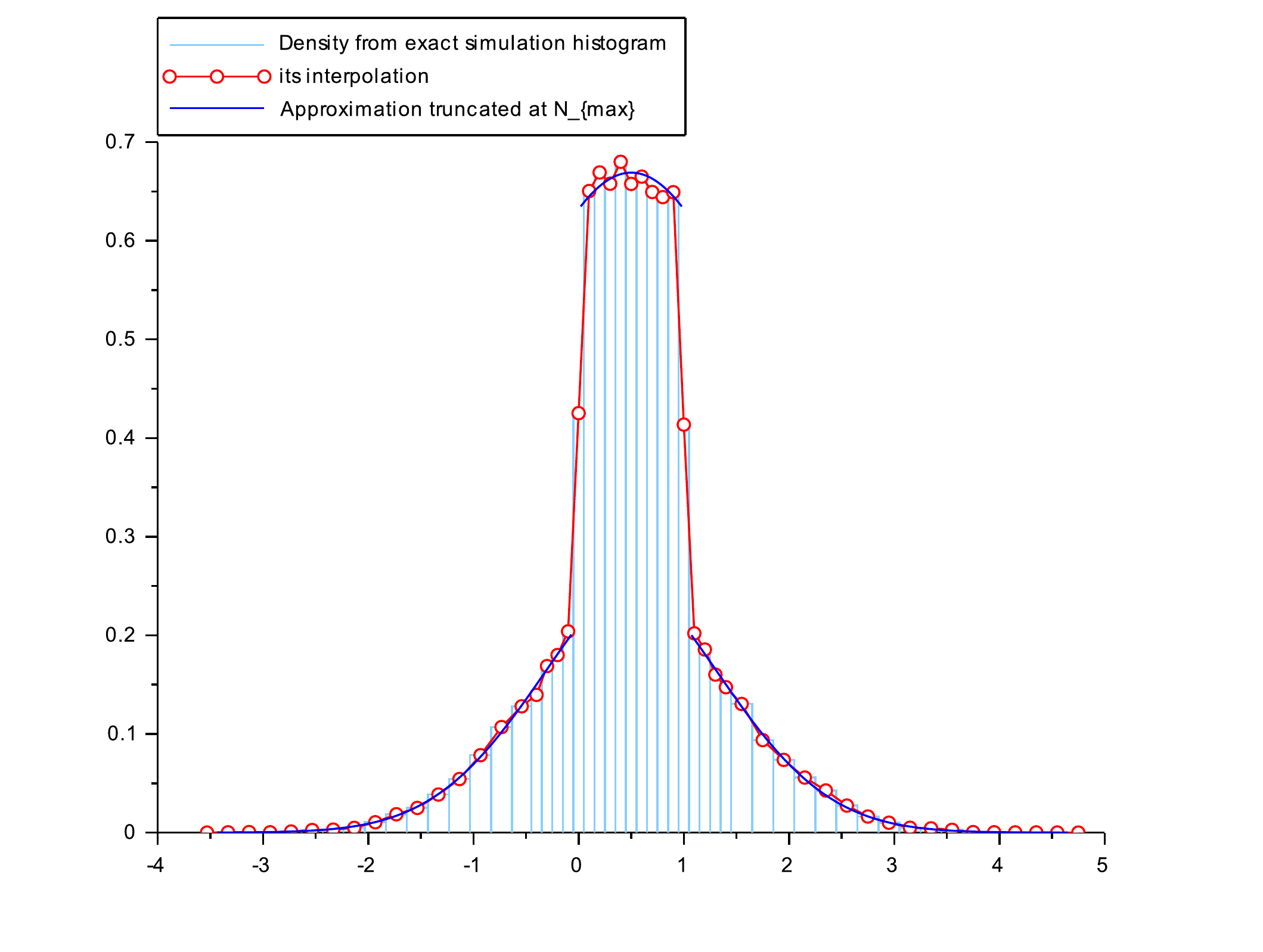}
\caption{Comparison between the function $y \mapsto p^{(\frac12,-\frac12)}(1,0.5,y)$ obtained from 50000 exact simulations through generalized rejection sampling method and the plot of its truncated version at the tenth term ($N_{max}=10$). The barriers are $z_1=0$ and $z_2=1$.}\label{symmetricdensity}
\end{minipage}
\end{figure}

Let us compare now the approximation of the density $y\mapsto p^{(\beta_1,\beta_2)}(t,x,y)$ in equation (\ref{tdf2skew}) obtained truncating the series at the $N_{max}$-th term and an histogram of a large number of \emph{exact} samples from the untruncated density computed through the generalized rejection sampling method. For simplicity, we always take time $t=1$, starting point $x=0.5$ and we assume that the barriers are fixed in $z_1=0$ and $z_2=1$.

We represent in Figure~\ref{symmetricdensity}, as typical situation, the function $y \mapsto p^{(\frac12,-\frac12)}(1,0.5,y)$. In this case, $100 \%$ of the 50000 simulations are exact. The average number of terms of the series that are necessary in order to decide if to accept or reject the simulations is smaller than $2$ ($1.6$). From now on we will denote this number as $N_{rej}$.\\
The transition density in this case is mainly concentrated inside of the interval between the barriers $(z_1,z_2)$ since $\beta_1>0$ and $\beta_2<0$. Choosing $N_{max}=10$ the truncated series differs from the untruncated one at most of $\overline{v} \, |\beta_1\beta_2|^{11}\sim 6 \cdot 10^{-7}$.

\begin{figure}[H]\centering\begin{minipage}{0.45\textwidth}
\centering
\includegraphics[width=9cm]{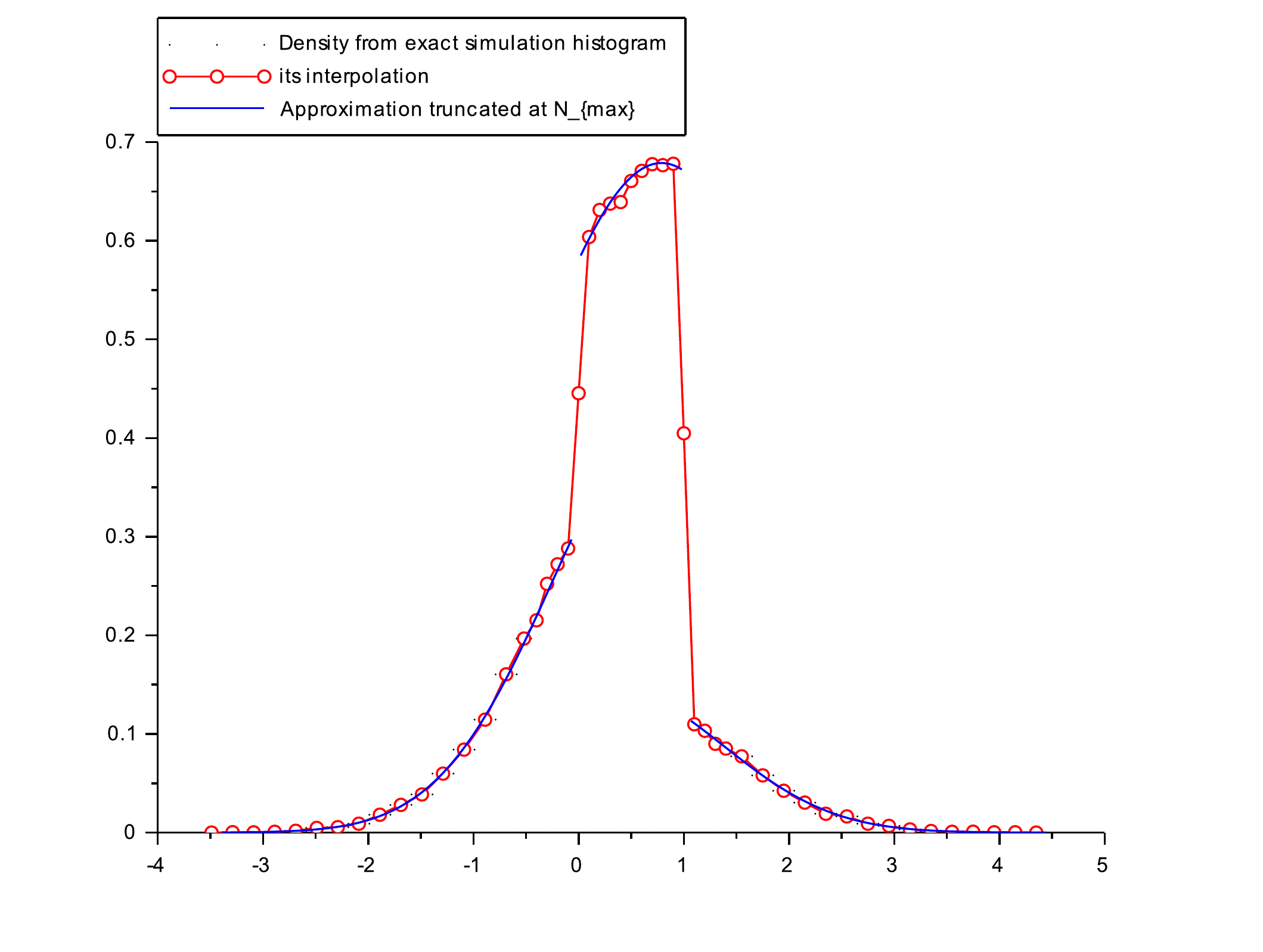}
\caption{Comparison between the function $y \mapsto p^{(0.3,-0.7)}(1,0.5,y)$ obtained from 50000 exact simulations through generalized rejection sampling method and the plot of its truncated version at the tenth term ($N_{max}=10$). $\delta_{10}=3.5 \cdot 10^{-8}$ and $Bv=2.8$. The barriers are $z_1=0$ and $z_2=1$. The average acceptance number is $N_{rej}=1.28$.}\label{nonsymmetricdensity}
\end{minipage}
\hfill
\begin{minipage}{0.45\textwidth}
\includegraphics[width=9cm]{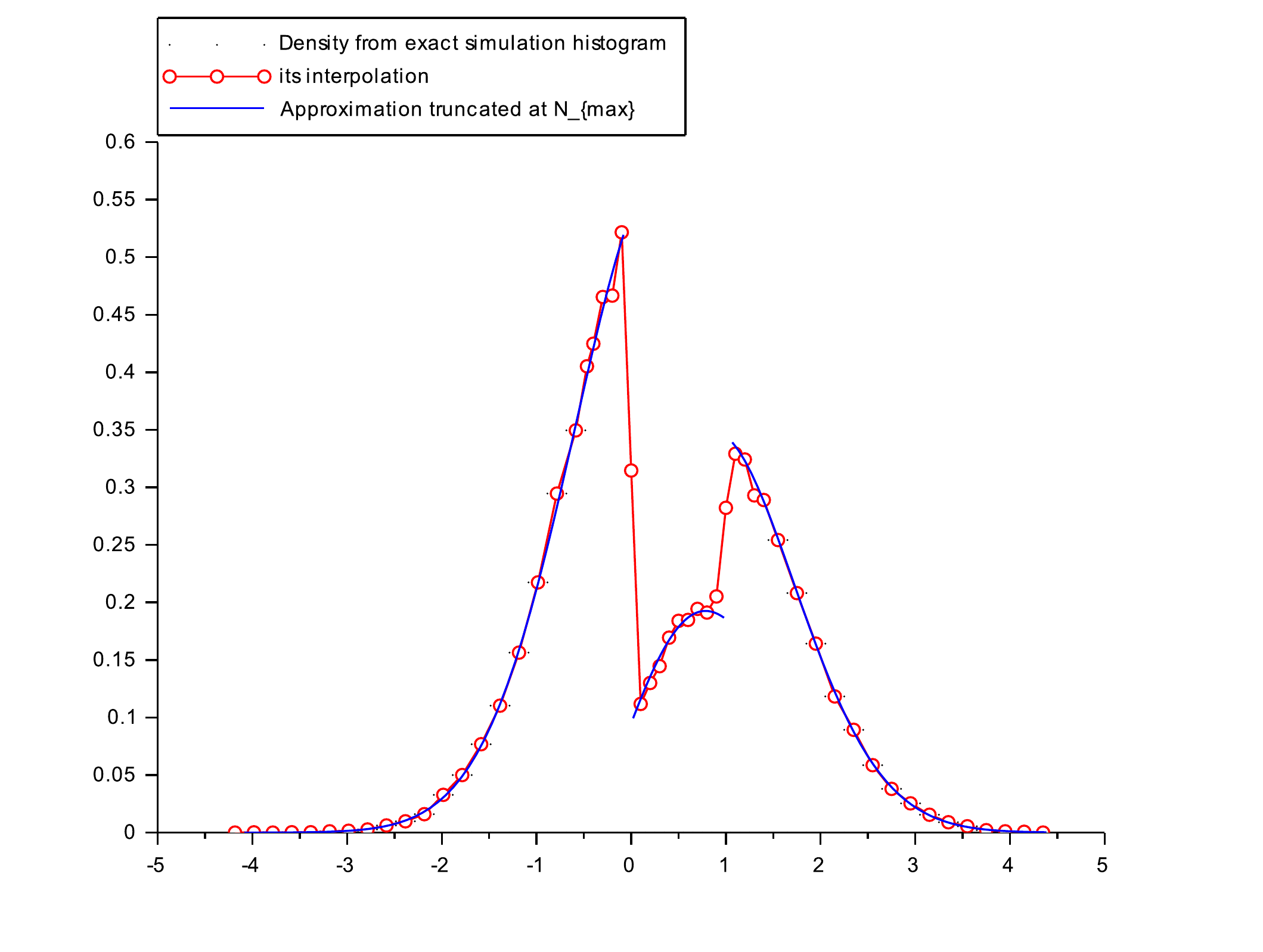}
\caption{
Comparison between the function $y \mapsto p^{(-0.7,0.3)}(1,0.5,y)$ obtained from 50000 exact simulations through generalized rejection sampling method and the plot of its truncated version at $N_{max}=10$. The barriers are $z_1=0$ and $z_2=1$. The average acceptance number $N_{rej}$ is $1.27.$}\label{nonsymmetricout}
\end{minipage}
\end{figure}

In Figure~\ref{nonsymmetricdensity} and \ref{nonsymmetricout} we propose skewness parameters with different absolute values and pointing respectively inward and outward. All our simulations are exact and $N_{rej}\sim 1.3$ is low as expected. In these cases $\delta_{n}=0.21^{n+1}$ and $\overline{v}=2.8$. We can observe in Figure~\ref{nonsymmetricdensity} that the process tends to stay between the barriers because when it reaches the barrier $z_1$ it has probability $\frac{1+\beta_1}2=0.65$ to be reflected to this region and when it reaches $z_2$ the probability is $\frac{1-\beta_2}{2}=0.85$. If the process leaves $(z_1,z_2)$, then the probability to be before $z_1$ is larger than to be after $z_2$ because $1-\beta_1 > 1+\beta_2$.\\
In Figure~\ref{nonsymmetricout} the parameters $\beta_1=-0.7$ and $\beta_2=0.3$ induce that the process is more likely to be outside the region between the barriers because it is reflected outside this region with probability $\frac{1-\beta_1}2=0.85$ in $z_1$ and with probability $0.65$ in $z_2$.

\begin{figure}[H]\centering\begin{minipage}{0.45\textwidth}
\centering
\includegraphics[width=9cm]{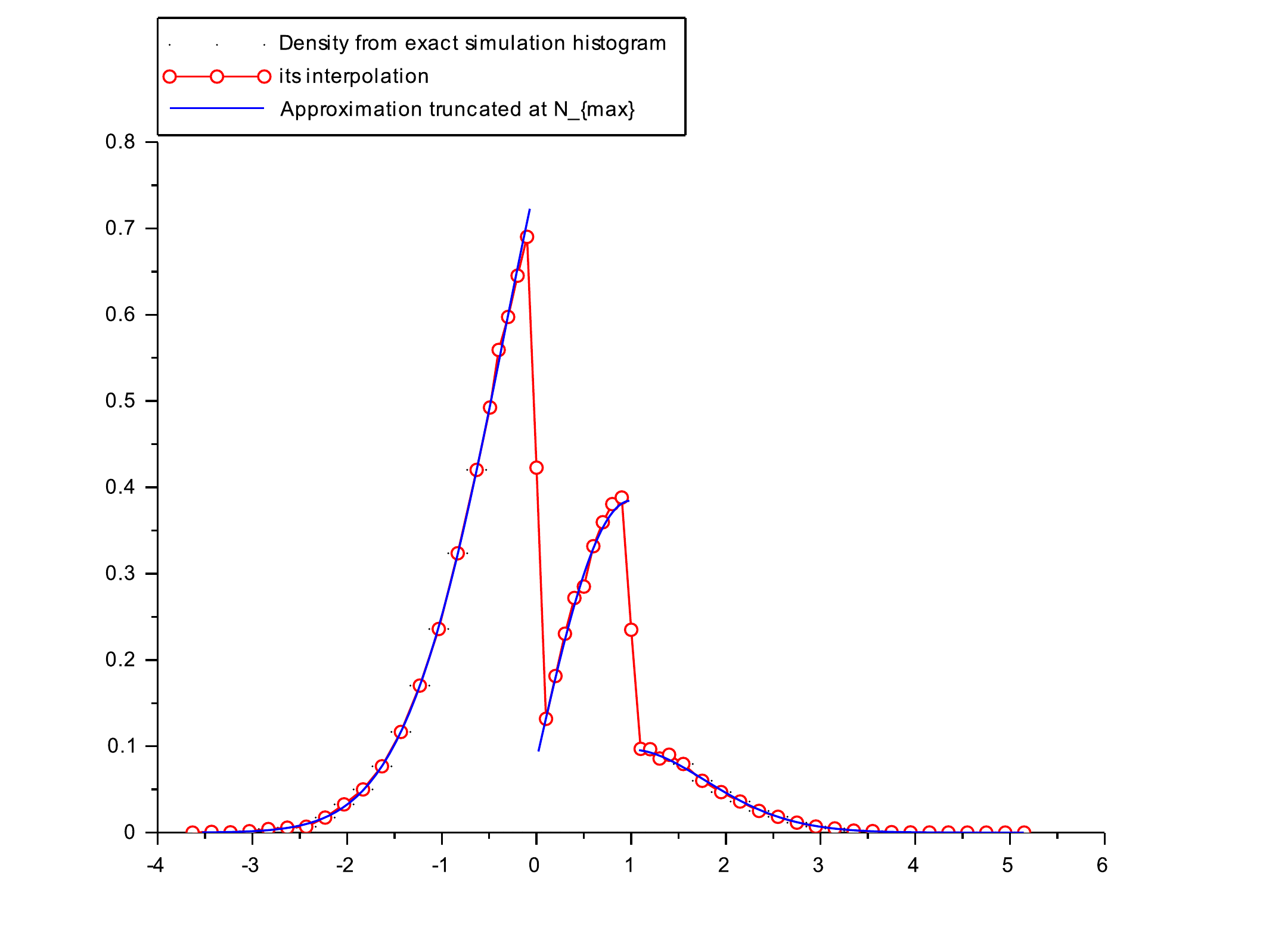}
\caption{Comparison between the function $y \mapsto p^{(-0.8,-0.6)}(1,0.5,y)$ obtained from 50000 exact simulations through generalized rejection sampling method and the plot of its truncated version at $N_{max}=10$ with $\delta_{10} = 3.12\cdot 10^{-4} $ (and $\overline{v}=5.54$). The barriers are $z_1=0$ and $z_2=1$. The average acceptance number is $3.58.$}\label{concordparam}
\end{minipage}
\hfill
\begin{minipage}{0.45\textwidth}
\includegraphics[width=9cm]{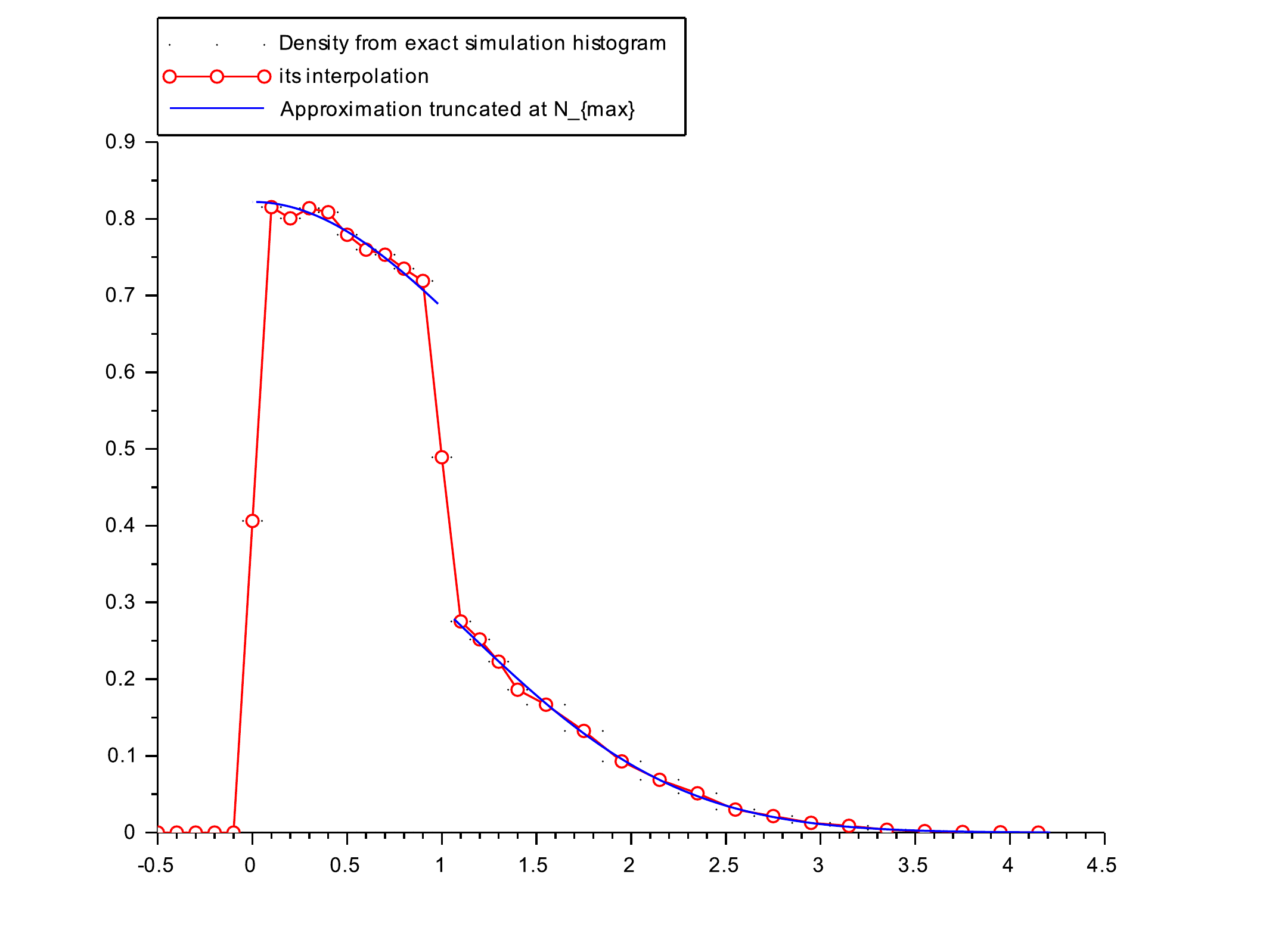}
\caption{Comparison between the function $y \mapsto p^{(1,-0.4)}(1,0.5,y)$ obtained from 50000 exact simulations through generalized rejection sampling method and the plot of its truncated version at $N_{max}=20$. The barriers are $z_1=0$ (completely reflecting) and $z_2=1$ (semipermeable). The average acceptance number is $2.36.$}\label{permeablereflection}
\end{minipage}
\end{figure}

Figure~\ref{concordparam} represents a case of $\beta_1\beta_2>0$. From the simulated density function it is confirmed the behaviour we expected: the process after a time $t$ will be more likely to stay on the the left (respectively right if the parameters are positive) side of the barriers. We chose the parameters $\beta_1<\beta_2$ in such a way that the process would more likely stay in $(-\infty,z_1)$.

Another interesting example is the case of a completely reflecting barrier and a partially reflecting one: in Figure~\ref{permeablereflection} we choose $\beta_1=1$ and $\beta_2<0$, i.e. $z_1$ totally reflecting and $z_2$ semipermeable with semipermeabiliy coefficient $\beta_2=-0.4$. The process shows the tendency to stay in the between the barriers $(z_1,z_2)$, while it will have probability zero to be in $(-\infty,z_1)$.

\section*{Appendix: details in the proof of Theorem \ref{tdf2skewdrift}}

We now propose with more details the steps between the convolution of Fourier transforms (\ref{firststepfourier}) and the final result of Theorem \ref{tdf2skewdrift}.

Equation (\ref{secondstepfourier}) is the convolution of two Fourier transforms, hence one needs to compute first the Fourier transforms separately and then the convolution.

\begin{lemma} The Fourier transform $\mathcal{F}$ of the function $w \mapsto e^{-\frac{w^2}{2}} c_j(y,\mu\sqrt{t};i w) (w^2+\mu^2t)^k$ is
\begin{equation} \mathcal{F}(v)=\sum_{h=0}^2 c_{j,2-h}(y) \sum_{m=0}^k {k \choose m} (\mu\sqrt{t})^{2(k-m+1)-h}  (-1)^{m+h} \frac{d^{2 m +h}}{d v^{2 m + h}} e^{-\frac{v^2}{2}},\end{equation}
where the functions $c_{j}$ and $c_{j,h}$ for $j=1,2,3,4$, $h=0,1,2$ are given in Lemma \ref{Green2skew}.

\end{lemma}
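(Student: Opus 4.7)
The plan is to compute the Fourier transform by reducing the integrand to a sum of terms of the form $w^n e^{-w^2/2}$ and then applying the classical identity $\mathcal{F}(w^n f)(v)=i^n\tfrac{d^n}{dv^n}\mathcal{F}(f)(v)$ together with $\mathcal{F}(e^{-w^2/2})(v)=e^{-v^2/2}$. Concretely, I will proceed in three bookkeeping steps.

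First, I would expand $(w^2+\mu^2 t)^k$ via the binomial theorem, writing
\[
(w^2+\mu^2 t)^k=\sum_{m=0}^k\binom{k}{m}(\mu\sqrt t)^{\,2(k-m)}\,w^{2m},
\]
so that the polynomial factor multiplying $e^{-w^2/2}$ becomes a sum of monomials $w^{2m}$ with explicit coefficients involving $(\mu\sqrt t)^{2(k-m)}$.

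Second, I would substitute the decomposition $c_j(\mu,y;w)=w^2c_{j,0}(y)+w\mu c_{j,1}(y)+\mu^2 c_{j,2}(y)$ from Lemma \ref{Green2skew}, evaluated at $w\mapsto iw$ and with $\mu\mapsto\mu\sqrt t$. This produces three contributions: $-w^2c_{j,0}(y)$, $iw\mu\sqrt t\, c_{j,1}(y)$, and $(\mu\sqrt t)^2c_{j,2}(y)$. Multiplying by the sum from the first step and by $e^{-w^2/2}$ gives, for each fixed $m$, three terms proportional to $w^{2m+2}e^{-w^2/2}$, $w^{2m+1}e^{-w^2/2}$, and $w^{2m}e^{-w^2/2}$ respectively, which I will index by $h\in\{0,1,2\}$ in such a way that the monomial is $w^{2m+h}e^{-w^2/2}$ and the corresponding coefficient of $c_j$ is $c_{j,2-h}(y)$ paired with $(\mu\sqrt t)^{2(k-m+1)-h}$.

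Third, I take the Fourier transform termwise. Using $\mathcal{F}(w^{2m+h}e^{-w^2/2})(v)=i^{2m+h}\tfrac{d^{2m+h}}{dv^{2m+h}}e^{-v^2/2}$ and $i^{2m+h}=(-1)^m i^h$, I note that the factor $i^h$ cancels exactly against the factors of $i$ appearing in the substitution $w\mapsto iw$ inside $c_j$ (there is an $i$ for each power of $w$ contributed by $c_j$, i.e.\ for $h=2$ we get $i^2=-1$, for $h=1$ we get $i$, and for $h=0$ no $i$), producing the global sign $(-1)^{m+h}$ claimed in the statement. Collecting by $h$ then yields exactly the announced formula.

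The only possible obstacle is the sign/parity bookkeeping: one must carefully track the factors of $i$ coming from the three sources (the substitution $w\mapsto iw$ in $c_j$, the derivative identity for $\mathcal{F}$, and the fact that the power of $w$ in $c_j$ matches the index $2-h$ in $c_{j,2-h}$) and verify that they combine into the single sign $(-1)^{m+h}$ and the exponent $2(k-m+1)-h$ of $\mu\sqrt t$. Since all three sums are finite and all functions involved are Schwartz, there are no convergence issues and the termwise Fourier transformation is fully justified.
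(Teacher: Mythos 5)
Your proof is correct and follows essentially the same route as the paper's: binomial expansion of $(w^2+\mu^2t)^k$, the decomposition $c_j(y,\mu\sqrt{t};iw)=\sum_{h=0}^2(\mu\sqrt{t})^{2-h}c_{j,2-h}(y)\,i^h w^h$, and termwise application of $\mathcal{F}(w^n e^{-w^2/2})=i^n\frac{d^n}{dv^n}e^{-v^2/2}$. The only quibble is wording: the two factors of $i^h$ do not cancel but multiply to $(-1)^h$, which combined with $i^{2m}=(-1)^m$ gives the sign $(-1)^{m+h}$ you correctly state.
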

\begin{proof}
Simply recall that
\[ c_j(y,\mu\sqrt{t};i w) = \sum_{h=0}^2 (\mu\sqrt{t})^{2-h} c_{j,2-h}(y) i^h w^h, \] and that
\[ \begin{split} i^h\mathcal{F}\left(  e^{-\frac{w^2}{2}} w^h (w^2+\mu^2t)^k\right) & = \sum_{m=0}^k {k \choose m} (\mu\sqrt{t})^{2(k-m)}  i^h \mathcal{F}\left(  e^{-\frac{w^2}{2}}  w^{2 m+h}\right). \end{split}\]
Finally one computes the Fourier transforms
\[ \begin{split} i^h \mathcal{F}\left(  e^{-\frac{w^2}{2}}  w^{2 m+h}\right) (v) & =  i^{2 (m +h)} \frac{d^{2 m +h}}{d v^{2 m + h}} e^{-\frac{v^2}{2}} =(-1)^{m+h} \frac{d^{2 m +h}}{d v^{2 m + h}} e^{-\frac{v^2}{2}} \end{split}\]
and concludes.
\end{proof}

If $\beta_1\neq \beta_2$, as corollary of Lemma \ref{PFD} and Lemma \ref{Fourier2} one has
\begin{equation} 
\begin{split}
\mathcal{F} & \left(w\mapsto\frac{-1}{(w- i \beta_1 \mu \sqrt{t})^{k+1}(w-i\beta_2 \mu \sqrt{t})^{k+1}}\right)(\omega)=\\ 
& = \frac{\sqrt{2 \pi}}{(\beta_1-\beta_2)^{2 k +1} (\mu \sqrt{t})^{2k+1} k!} \cdot \sum_{n=0}^{k} \frac{\left(2k-n\right)!}{n! (k-n)!} (\beta_1-\beta_2)^n (\mu \sqrt{t})^n \omega^{n} \left[g(\omega,{\beta_2 \mu}\sqrt{t}) - (-1)^n g(\omega,{\beta_1 \mu}\sqrt{t})\right],
\end{split}
\end{equation}
where we defined the function
\[ g(\omega,A):=  \left(2\mathbbm{1}_{\mathbb{R}^+}(A)-1\right)e^{A \, \omega} \mathbb{I}_{\mathbb{R}^-}\left(\left(2\mathbbm{1}_{\mathbb{R}^+}(A)-1\right)\omega\right)\]
Since $\beta_1\mu, \beta_2\mu$ are both positive then $g(\omega,\beta_i \mu\sqrt{t})= e^{\beta_i\mu\sqrt{t}\omega} \mathbb{I}_{\mathbb{R}^-}(\omega)$, but we give here the proof in the general case $\beta_i\mu\sqrt{t}\neq 0$.

If $\beta_1=\beta_2$ Lemma \ref{Fourier2} gives the formula for $\mathcal{F}\left(\frac{-1}{(w- i \beta_1\mu\sqrt{t})^{2(k+1)}}\right)$.

Let us define
\[\mathscr{G}_{m,n}^h(\omega,A):=(-1)^h\left(w^{n}g(w,A)*  \frac{d^{2 m +h}}{d w^{2 m + h}} e^{-\frac{w^2}{2}}\right)(\omega)\]
where $A$ is a fixed real parameter and also \[\mathscr{F}_{m,n}^h(\omega)=\mathscr{G}_{m,n}^h(\omega,\beta_2\mu \sqrt{t})-(-1)^n \mathscr{G}_{m,n}^h(\omega,\beta_1\mu \sqrt{t}).\]
$F_{j,k}$ in equation (\ref{secondstepfourier}) is given by
\[\begin{cases}
F_{j,k}= \sum_{n=0}^k\sum_{m=0}^k \frac{  (-1)^m (2k-n)!}{(k-n)! (k-m)! n! m! k! }\frac{(\mu\sqrt{t})^{2(k-m)}}{(\beta_1\mu\sqrt{t}-\beta_2\mu\sqrt{t})^{2k+1-n}} \sum_{h=0}^2 c^{2-h}_j(x,y)(\mu\sqrt{t})^{2-h}\mathscr{F}_{m,n}^h, & \text{ if } \beta_1\neq \beta_2,\\
F_{j,k}= \sum_{m=0}^k \frac{  (-1)^{m+k}}{(2k+1)! }{k\choose m} (\mu\sqrt{t})^{2(k-m)} \sum_{h=0}^2 c^{2-h}_j(x,y)(\mu\sqrt{t})^{2-h}\mathscr{G}_{m,2k+1}^h, & \text{ if }\beta_1= \beta_2.
\end{cases} \]

It remains to compute the function $\mathscr{G}_{m,n}^h(\omega,A)$.
One can use that
\[\frac{d^{n}}{d w^{n}} e^{-\frac{w^2}{2}}=(-1)^n e^{-\frac{w^2}{2}} H_{n}(w)\]
where $H_n(w)$ are the Hermite polynomials.
\[\begin{split}
\mathscr{G}_{m,n}^h(\omega,A) &=\left(w^{n}g(w,A)* H_{2m+h}(w)e^{-\frac{w^2}{2}}\right)(\omega)\\
&=\left(2\mathbbm{1}_{\mathbb{R}^+}(A)-1\right)\int_{\mathbb{R}} \mathbb{I}_{\mathbb{R}^-}(A w) w^{n} e^{-\frac{(\omega-w)^2}{2}+Aw}  H_{2m+h}(\omega-w)dw\\
& =\left(2\mathbbm{1}_{\mathbb{R}^+}(A)-1\right)e^{\frac{A^2}{2}+A\omega} \int_{\mathbb{R}} \mathbb{I}_{\mathbb{R}^-}(A w) w^{n} e^{-\frac{\left(w-(\omega+A)\right)^2}{2}}  H_{2m+h}(\omega-w)dw\\
& \overset{(v=w-A-\omega)}{=} \left(2\mathbbm{1}_{\mathbb{R}^+}(A)-1\right)e^{\frac{A^2}{2}+A\omega} \int_{\mathbb{R}} \mathbb{I}_{\mathbb{R}^-}(A (v+A+\omega)) (v+A+\omega)^{n} e^{-\frac{v^2}{2}}  H_{2m+h}(-A-v)dv\\
&= \left(2\mathbbm{1}_{\mathbb{R}^+}(A)-1\right)e^{\frac{A^2}{2}+A\omega} \int_{(-\left(2\mathbbm{1}_{\mathbb{R}^+}-1\right)\infty,-(\omega+A))} (v+A+\omega)^{n} e^{-\frac{v^2}{2}}  H_{2m+h}(-A-v)dv
\end{split}\]
We can then use the binomial formula and then the explicit expression for the Hermite polynomials:
\[H_n(w)=n! \sum_{\ell=0}^{\lfloor {\frac{n}{2}}\rfloor} (-1)^\ell \frac{1}{2^\ell}\frac{1}{\ell! (n-2\ell)!} w^{n-2\ell}.\]

Therefore
\[\begin{split}
\mathscr{G}_{m,n}^h(\omega,A) &= \left(2\mathbbm{1}_{\mathbb{R}^+}(A)-1\right) e^{\frac{A^2}{2}+A\omega} \int_{(-\left(2\mathbbm{1}_{\mathbb{R}^+}-1\right)\infty,-(\omega+A))} (v+A+\omega)^{n} e^{-\frac{v^2}{2}}  H_{2m+h}(-(A+v))dv\\
 &= (2m+h)! \sum_{\ell=0}^{m+\lfloor {\frac{h}{2}}\rfloor} \frac{(-1)^\ell }{2^\ell}\frac{1}{\ell! (2(m-\ell)+h)!} \ S_{m,n,l}^h(A,\omega)
\end{split}\]
where, if $I(\omega,A)$ denotes the interval $(-\left(2\mathbbm{1}_{\mathbb{R}^+}(A)-1\right)\infty,-(\omega+A))$,
\[\begin{split}
 S_{m,n,l}^h(A,\omega) & =(-1)^h \left(2\mathbbm{1}_{\mathbb{R}^+}(A)-1\right) e^{\frac{A^2}{2}+A\omega} \int_{I(\omega,A)} (v+A+\omega)^{n} e^{-\frac{v^2}{2}} (A+v)^{2(m-\ell)+h}dv\\
& = (-1)^h \sum_{r=0}^n \sum_{s=0}^{2(m-\ell)+h} {n \choose r} {2(m-\ell)+h \choose s} (\omega+A)^{n-r} A^{2(m-\ell)+h-s} J_{r+s}(\omega,A),
\end{split}\]
where $J_{r+s}(\omega,A)=\left(2\mathbbm{1}_{\mathbb{R}^+}(A)-1\right)e^{\frac{A^2}{2}+A\omega}\int_{I(\omega,A)} v^{r+s} e^{-\frac{v^2}{2}}dv=\begin{cases}
e^{\frac{A^2}2+A\omega}I_{r+s}(-(\omega+A)) & A>0\\
- e^{\frac{A^2}2+A\omega}\tilde{I}_{r+s}(-(\omega+A)) & A<0\end{cases}$ with $I_q$ and $\tilde I _q$ defined in Lemma \ref{integraldef}.\\

\textbf{Acknowledgements}: The authors acknowledge the Deutsch-Französische Hochschule - Université Franco-Allemande (DFH-UFA) and the RTG 1845 Stochastic Analysis with Applications in Biology, Finance and Physics for their financial support.

\addcontentsline{toc}{section}{Bibliography}

\bibliographystyle{plain}
\bibliography{exact_algorithm_bibliography}

\end{document}